\theoremstyle{plain}
\newtheorem{theorem}{\bf Theorem}[section]
\newtheorem{proposition}[theorem]{\bf Proposition}
\newtheorem{lemma}[theorem]{\bf Lemma}
\newtheorem{corollary}[theorem]{\bf Corollary}
\theoremstyle{definition}
\newtheorem{example}[theorem]{\bf Example}
\newtheorem{definition}[theorem]{\bf Definition}
\newtheorem{remark}[theorem]{\bf Remark}
\newcommand{\N}{\mathbb N}
\newcommand{\Z}{\mathbb Z}
\newcommand{\R}{\mathbb R}
\newcommand{\Q}{\mathbb Q}
\newcommand{\BF}{\text{\rm BF}}
\newcommand{\FF}{\text{\rm FF}}
\newcommand{\C}{\text{\rm C}}
\newcommand{\G}{\text{\rm G}}
\DeclareMathOperator{\spec}{spec} \DeclareMathOperator{\supp}{supp}
\DeclareMathOperator{\id}{id}
\newcommand{\DP}{\negthinspace :\negthinspace}
\newcommand{\red}{\text{\rm red}}
\newcommand{\eq}{\text{\rm eq}}
\newcommand{\adj}{\text{\rm adj}}
\newcommand{\mon}{\text{\rm mon}}
\renewcommand{\time}{\negthinspace \times \negthinspace}
\renewcommand{\t}{\, | \,}
\numberwithin{equation}{section}
\begin{document}

\title[The monotone catenary degree]{The monotone catenary degree of monoids of ideals}

\address{Institut f\"ur Mathematik und wissenschaftliches Rechnen, Karl-Franzens-Universit\"at Graz, NAWI Graz, Heinrichstra{\ss}e 36, 8010 Graz, Austria}
\email{alfred.geroldinger@uni-graz.at, andreas.reinhart@uni-graz.at}
\author{Alfred Geroldinger and Andreas Reinhart}
\urladdr{http://imsc.uni-graz.at/geroldinger, http://imsc.uni-graz.at/reinhart}
\thanks{This work was supported by the Austrian Science Fund FWF, Project Number P26036-N26.
\\}
\keywords{ideal systems, Cohen-Kaplansky domains, generalized Cohen-Kaplansky domains, weakly Krull domains, monotone catenary degree, sets of lengths}
\subjclass[2010]{13A15, 13F15, 20M12, 20M13}

\begin{abstract}
Factoring ideals in integral domains is a central topic in multiplicative ideal theory. In the present paper we study monoids of ideals and consider factorizations of ideals into multiplicatively irreducible ideals. The focus is on the monoid of nonzero divisorial ideals and on the monoid of $v$-invertible divisorial ideals in weakly Krull Mori domains. Under suitable algebraic finiteness conditions we establish arithmetical finiteness results, in particular for the monotone catenary degree and for the structure of sets of lengths and of their unions.
\end{abstract}

\maketitle

\medskip
\section{Introduction}\label{1}
\medskip

Factoring ideals in integral domains is a central topic in multiplicative ideal theory (for a monograph reflecting recent developments we refer to \cite{Fo-Ho-Lu13a}).
In the present paper we study monoids of ideals, consider factorizations of ideals into multiplicatively irreducible ideals, and prove finiteness results on the monotone catenary degree and structural results on sets of lengths. First we recall the concept of the monotone catenary degree and then we discuss the monoids of ideals under consideration.

Let $H$ be an atomic monoid which is not factorial. Then every non-unit can be written as a finite product of atoms and there is an element $a\in H$ having at least two distinct factorizations, say $a = u_1\cdot\ldots\cdot u_{\ell} v_1\cdot\ldots\cdot v_m = u_1\cdot\ldots\cdot u_{\ell} w_1\cdot\ldots\cdot w_n$ where all $u_i, v_j, w_k$ are atoms and the $v_j$ and $w_k$ are pairwise not associated. Then $\ell+m$ and $\ell+n$ are the lengths of the two factorizations and $\max \{m,n\} > 0$ is their distance. Then for every $M\in \N$, the element
\[
a^M = (u_1\cdot\ldots\cdot u_{\ell} v_1\cdot\ldots\cdot v_m)^{\nu} (u_1\cdot\ldots\cdot u_{\ell} w_1\cdot\ldots
\cdot w_n)^{M- \nu} \quad \text{for all } \quad \nu\in [0,M]
\]
has factorizations with distance greater than $M$. However, at least those factorizations, which are powers of factorizations of $a$, can be concatenated, step by step, by factorizations whose distance is small and does not depend on $M$. This phenomenon is formalized by the catenary degree which is defined as follows. The catenary degree $\mathsf c (H)$ of $H$ is the smallest $N\in \N_0 \cup \{\infty\}$ such that for each $a\in H$ and each two factorizations $z, z'$ of $a$ there is a concatenating chain of factorizations $z = z_0, z_1, \ldots, z_{k+1}=z'$ of $a$ such that the distance $\mathsf d (z_{i-1}, z_i)$ between two successive factorizations is bounded by $N$. It is well-known that the catenary degree is finite for Krull monoids with finite class group and for C-monoids (these include Mori domains $R$ with nonzero conductor $\mathfrak f = (R \colon \widehat R)$ for which the residue class ring $\widehat R/\mathfrak f$ and the class group $\mathcal C ( \widehat R)$ are finite).

In order to study further structural properties of concatenating chains, Foroutan introduced the monotone catenary degree (\cite{Fo06a}). The monotone catenary degree $\mathsf c_{\mon} (H)$ is the smallest $N\in \N_0 \cup \{\infty\}$ such that for each $a\in H$ and each two factorizations $z, z'$ of $a$ there is a concatenating chain of factorizations $z = z_0, z_1, \ldots, z_{k+1}=z'$ of $a$ such that the distance between two successive factorizations is bounded by $N$ and in addition the sequence of lengths $|z_0|, \ldots, |z_{k+1}|$ of the factorizations is monotone (thus either $|z_0|\le \ldots \le |z_{k+1}|$ or $|z_0|\ge \ldots \ge |z_{k+1}|$). Therefore, by definition, we have $\mathsf c (H) \le \mathsf c_{\mon} (H)$ and Foroutan showed that the monotone catenary degree of Krull monoids with finite class group is finite again. Subsequently the monotone catenary degree, or more generally, monotonic properties of concatenating chains were studied in a variety of papers (e.g., \cite{Bl-Ga-Ge11a,Fo-Ha06b,Ge-Gr-Sc-Sc10,Ge-Yu13a,Ha09c,Ph12b}). We mention one result in detail, namely that C-monoids have the following property (\cite[Theorem 1.1]{Fo-Ge05}):

\begin{itemize}
\item[] There exists some constant $M\in \N$ such that for every $a\in H$ and for each two factorizations $z, z'$ of $a$ there exist factorizations $z=z_0, z_1, \ldots, z_{k+1} = z'$ of $a$ such that, for every $i\in [1, k+1]$,
\[
\mathsf d (z_{i-1}, z_i ) \le M \quad \text{ and \ $($either} \
|z_1| \le \ldots \le |z_k| \ \text{or} \ |z_1| \ge \ldots \ge
|z_k| \ ).
\]
\end{itemize}
Thus the sequence of lengths of factorizations is monotone apart from the first and the last step. However, there are C-monoids having infinite monotone catenary degree and this may happen even for one-dimensional local Noetherian domains (see Remark \ref{5.3}).

In Section \ref{4} we study integral domains $R$ with complete integral closure $\widehat R$, nonzero conductor $(R \colon \widehat R)$, and with an $r$-Noetherian ideal system $r$ such that $r$-$\max(R)= \mathfrak X(R)$. Under suitable algebraic finiteness conditions the monoid of nonzero $r$-ideals $\mathcal I_r(R)$ is finitely generated up to a free abelian factor. However, since the monoid need not be cancellative, its study needs some semigroup theoretical preparations (done in Section \ref{3}) valid for unit-cancellative  monoids. The main arithmetical result on $\mathcal I_r(R)$ is given in Theorem \ref{4.15}, and for its assumptions see Theorems \ref{4.8} and \ref{4.13}.

In Section \ref{5} we study the monoid $\mathcal I_v^*(H)$ of $v$-invertible $v$-ideals of $v$-Noetherian weakly Krull monoids with nontrivial conductor. This is a C-monoid and isomorphic to a finite direct product of finitely primary monoids and free abelian factor. However, in general, the finiteness of the monotone catenary degree is not preserved when passing to finite direct products (Example \ref{3.3}). In order to overcome this difficulty we introduce a new arithmetical invariant, the weak successive distance (Definition \ref{3.4}), and establish a result allowing to shift the finiteness of the monotone catenary degree to finite direct products (Theorem \ref{3.8}). This is done under the additional assumption that the Structure Theorem for Sets of Lengths holds (which is the case for $\mathcal I_v^*(H)$). Along our way we prove that the Structure Theorem for Unions holds, both for the monoid $\mathcal I_r(R)$ (studied in Section \ref{4}) as well as for $\mathcal I_v^*(H)$. This is based on a recent characterization result for the validity of the Structure Theorem for Unions obtained in \cite{F-G-K-T17}. The main results on $\mathcal I_v^*(H)$ are given in Theorem \ref{5.13} and Corollary \ref{5.14}.

\medskip
\section{Background on the arithmetic of monoids} \label{2}
\medskip

We denote by $\N$ the set of positive integers, and we put $\N_0 =
\N\cup\{0\}$. For $s\in \N$, we will consider the product order on $\mathbb N_0^s$ which is induced by the usual order of $\mathbb N_0$.
For real numbers $a,b\in\R$, we
set $[a,b] =\{ x\in\Z\mid a\le x\le b\}$. Let $L,L'\subset
\Z$. We denote by $L+L' =\{a+b\mid a\in L,\,b\in L'\}$ their
{\it sumset}. Two distinct elements $k, \ell\in L$ are called {\it
adjacent} if $L\cap [\min\{k,\ell\},\max\{k,\ell\}]=\{k,\ell\}$. A positive
integer $d\in\N$ is called a\ {\it distance}\ of $L$\ if there
exist adjacent elements $k,\ell\in L$ with $d=|\ell - k|$, and we denote by $\Delta (L)$ the {\it set of distances} of $L$. If $L \subset \N$, we denote by $\rho (L) = \sup L/\min L\in \Q_{\ge 1} \cup \{\infty\}$ the {\it elasticity} of $L$. We set $\rho (\{0\})=1$ and $\max\emptyset=\min\emptyset=\sup\emptyset=0$.

\smallskip
\noindent {\bf Monoids and factorizations}. All rings and semigroups are commutative and have an identity element. Let $H$ be a semigroup. If not stated otherwise, we use multiplicative notation and $1=1_H\in H$ means the identity element of $H$. We denote by $H^{\times}$ the group of invertible elements of $H$ and we say that $H$ is reduced if $H^{\times}=\{1\}$. Then $H_{\red} = \{aH^{\times} \mid a\in H \}$ denotes the associated reduced semigroup of $H$. Furthermore, $H$ is called
\begin{itemize}
\item {\it unit-cancellative} if $a,u\in H \quad \text{and} \quad a= au, \quad \text{implies that} \quad u\in H^{\times}$,

\item {\it cancellative} if $a,b,u\in H$ and $au = bu$ implies that $a=b$.
\end{itemize}
Obviously, every cancellative semigroup is unit-cancellative. The property of being unit-cancellative is a frequently studied property both for rings and semigroups. Indeed, a commutative ring $R$ is called pr\'esimplifiable if $a,u\in R$ and $a = au$ implies that either $a=0$ or $u\in R^{\times}$.
This concept was introduced by Bouvier and further studied by D.D. Anderson et al. (\cite{An-Al17a, An-VL96a, Bo72a, Fr-Fr11a}).

If $H$ is cancellative, then $\mathsf q (H)$ denotes the
quotient group of $H$ and
\[
\widehat H = \{ x\in \mathsf q (H) \mid \text{there is a $c\in H$ such that $cx^n\in H$ for every $n\in \N$} \} \subset \mathsf q (H)
\]
is the {\it complete integral closure} of $H$. Let $R$ be a domain with quotient field $K$. For every subset $X \subset K$ we set $X^{\bullet} = X \setminus \{0\}$. Then $R^{\bullet}$ is a cancellative semigroup, $\overline R$ is the integral closure of $R$, and $\widehat R = \widehat{R^{\bullet}} \cup \{0\}$ is the complete integral closure of $R$.

\medskip
\centerline{\it Throughout this paper, a monoid will always mean a}
\centerline{\it commutative unit-cancellative semigroup with identity element.}
\medskip

For a set $\mathcal P$, we denote by $\mathcal F (\mathcal P)$ the\ {\it free
abelian monoid}\ with basis $\mathcal P$. Then every $a\in\mathcal F
(\mathcal P)$ has a unique representation in the form
\[
a =\prod_{p\in \mathcal P} p^{\mathsf v_p(a) }\quad\text{with}\quad
\mathsf v_p(a)\in\N_0\,\text{ and }\,\mathsf v_p(a) = 0\quad\text{
for almost all }\ p\in \mathcal P\,.
\]
We call $|a|=\sum_{p\in \mathcal P}\mathsf v_p(a)$ the \emph{length} of $a$
and $\supp (a) =\{p\in \mathcal P\mid\mathsf v_p (a) > 0\}\subset \mathcal P$ the
{\it support} of $a$.

Let $H$ be a monoid. A submonoid $S \subset H$ is said to be
\begin{itemize}
\item {\it divisor-closed} if $a\in S$ and $b\in H$ with $b \t a$ implies that $b\in S$,

\item {\it saturated} if $a, c\in S$ and $b\in H$ with $a=bc$ implies that $b\in S$.
\end{itemize}
Clearly, every divisor-closed submonoid is saturated.
An element $u\in H$ is said to be {\it irreducible} (or an {\it atom}) if $u \notin H^{\times}$ and an equation $u=ab$ with $a, b\in H$ implies that $a\in H^{\times}$ or $b\in H^{\times}$.
Then $\mathcal A (H)$ denotes the set of atoms of $H$, and $H$ is said to be {\it atomic} if every non-unit can be written as a finite product of atoms of $H$. A simple argument shows that $H$ is atomic whenever the ACCP (ascending chain condition on principal ideals) holds (\cite[Lemma 3.1]{F-G-K-T17}). From now on we suppose that $H$ is atomic.

The free abelian monoid $\mathsf Z (H) =\mathcal F\bigl(
\mathcal A(H_\red)\bigr)$\ is called the\ {\it factorization
monoid}\ of $H$, and the homomorphism
\[
\pi\colon\mathsf Z (H)\to H_{\red}\quad\text{satisfying}\quad
\pi (u) = u\quad\text{for each}\quad u\in\mathcal A(H_\red)
\]
is called the\ {\it factorization homomorphism}\ of $H$. For $a
\in H$ and $k\in\N$,
\[
\begin{aligned}
\mathsf Z_H (a) = \mathsf Z (a) & =\pi^{-1} (aH^\times)\subset
\mathsf Z (H)\quad
\text{is the\ {\it set of factorizations}\ of\ $a$}\,,\\
\mathsf Z_{H,k} (a) = \mathsf Z_k (a) & =\{ z\in\mathsf Z (a)\mid |z| = k\}\quad
\text{is the\ {\it set of factorizations}\ of\ $a$ of length\
$k$},\quad\text{and}
\\
\mathsf L_H (a) = \mathsf L (a) & =\bigl\{ |z|\,\bigm|\, z\in
\mathsf Z (a)\bigr\}\subset\N_0\quad\text{is the\ {\it set of
lengths}\ of $a$}\,.
\end{aligned}
\]
If $S \subset H$ is a divisor-closed submonoid and $a\in S$, then $\mathsf Z_S (a) = \mathsf Z_H (a)$ whence $\mathsf L_S (a) = \mathsf L_H (a)$.
Then
\begin{itemize}
\item $\mathcal L (H) = \{\mathsf L (a) \mid a\in H \}$ is the {\it system of sets of lengths} of $H$,

\item $\Delta (H) =\bigcup_{L\in \mathcal L (H)} \Delta \big(L \big)$ is the {\it set of distances} of $H$, and

\item $\rho (H) = \sup \{ \rho (L) \mid L\in \mathcal L (H) \}$ is the {\it elasticity} of $H$.
\end{itemize}
The monoid $H$ is said to be
\begin{itemize}
\item {\it half-factorial} if $\Delta (H) =\emptyset$ (equivalently, $|\mathsf L (a)|=1$ for all $a\in H$),

\item an {\it \FF-monoid} if $\mathsf Z (a)$ is finite for all $a\in H$,

\item a {\it \BF-monoid} if $\mathsf L (a)$ is finite for all $a\in H$.
\end{itemize}

\smallskip
Let $z,\,z'\in\mathsf Z (H)$. Then we can write
\[
z = u_1\cdot\ldots\cdot u_{\ell} v_1\cdot\ldots\cdot v_m\quad
\text{and}\quad z' = u_1\cdot\ldots\cdot u_{\ell} w_1\cdot\ldots
\cdot w_n\,,
\]
where $\ell,\,m,\,n\in\N_0$ and $u_1,\ldots,u_\ell,\,v_1,
\ldots,v_m,\,w_1,\ldots,w_n\in\mathcal A(H_\red)$ are such that
\[
\{v_1,\ldots,v_m\}\cap\{w_1,\ldots,w_n\} =\emptyset\,.
\]
Then $\gcd(z,z')=u_1\cdot\ldots\cdot u_{\ell}$, and we call
\[
\mathsf d (z,z') =\max\{m,\,n\} =\max\{ |z\gcd (z,z')^{-1}|,
|z'\gcd (z,z')^{-1}|\}\in\N_0
\]
the {\it distance} between $z$ and $z'$. If $\pi (z) =\pi (z')$ and
$z\ne z'$, then
\begin{equation}\label{E:Dist}
1 + \bigl| |z |-|z'|\bigr|\le\mathsf d (z,z') \quad \text{resp.} \quad 2 + \bigl| |z |-|z'|\bigr|\le\mathsf d (z,z') \quad \text{if $H$ is cancellative}
\end{equation}
(see \cite[Proposition 3.2]{F-G-K-T17} and \cite[Lemma 1.6.2]{Ge-HK06a}). For subsets $X,Y\subset\mathsf Z(H)$, we set
\[
\mathsf d(X,Y) =\min\{\mathsf d(x,y)\mid x\in X,\,y\in
Y\}\textnormal{ and}
\]
\[
{\rm Dist}(X,Y)=\sup\{\mathsf d (\{x\},Y),\mathsf d (X,\{y\})\mid x\in X,\,y\in
Y\}.
\]

\smallskip
Note that $\mathsf d (X,Y) = 0$ if and only if ( $X\cap Y\ne
\emptyset$ or $X=\emptyset$ or $Y =\emptyset$ ). For a factorization $z\in \mathsf Z(H)$, we denote by \ $\delta (z)$ \ the smallest $N\in \N_0$ with the following property:

\begin{enumerate}
\item[]
If $k\in \N$ is such that $k$ and $|z|$ are adjacent lengths of $\pi(z)$, then there exists some $y\in \mathsf Z(H)$ such that $\pi(y) = \pi(z)$, \ $|y| = k$ and \ $\mathsf d(z,y) \le N$.
\end{enumerate}

\noindent
We call
\[
\delta (H) = \sup \{\, \delta (z) \mid z\in Z(H) \}\in \N_0 \cup
\{\infty\}
\]
the \ {\it (strong) successive distance} \ of $H$. Note that
\[
\delta (H) = \sup \big\{ {\rm Dist}(\mathsf Z_k (a), \mathsf Z_{\ell} (a) ) \mid a \in H, k,\ell \in \mathsf L (a) \ \text{are adjacent}    \big\} \,.
\]

\medskip
\noindent
{\bf Chains of factorizations}. Let $a\in H$ and
$N\in\mathbb N_0\cup\{\infty\}$. A finite sequence $z_0,\ldots,
z_k\in\mathsf Z (a)$ is called a {\it $($monotone$)$ $N$-chain of
factorizations of $a$} if $\mathsf d (z_{i-1},z_i)\le N$ for all $i\in
[1,k]$ (and $|z_0|\le\ldots\le |z_k|$ or $|z_0|\ge\ldots\ge
|z_k|$). We denote by $\mathsf c (a)$ (or by $\mathsf c_{\mon} (a)$ resp.) the smallest $N\in\N _0\cup\{\infty\}$ such
that any two factorizations $z,\,z'\in\mathsf Z (a)$ can be
concatenated by an $N$-chain (or by a monotone $N$-chain resp.).
Then
\[
\mathsf c(H) =\sup\{\mathsf c(b)\mid b\in H\}\in\N_0\cup
\{\infty\}\quad\text{and}\quad\mathsf c_{\mon} (H) =\sup\{
\mathsf c_{\mon} (b)\mid b\in H\}\in\N_0\cup\{\infty\}\quad
\,
\]
denote the\ {\it catenary degree}\ and the\ {\it monotone
catenary degree} of $H$. The monotone catenary degree is studied by
using the two auxiliary notions of the equal and the adjacent
catenary degrees. Let $\mathsf c_{\eq} (a)$ denote the smallest $N
\in\mathbb N_0\cup\{\infty\}$ such that any two factorizations $z,z'\in\mathsf Z (a)$ with $|z| = |z'|$ can be concatenated by a monotone $N$-chain.
We call
\[
\mathsf c_{\eq} (H) =\sup\{\mathsf c_{\eq} (b)\mid b\in H\}\in\mathbb N_0\cup\{\infty\}
\]
the {\it equal catenary degree} of $H$.
We set
\[
\mathsf c_{\adj}(a) =\sup\{\mathsf d\big(\mathsf Z_k (a),
\mathsf Z_{\ell} (a)\big)\mid k, \ell\in\mathsf L (a)\,\text{are
adjacent}\}\,,
\]
and the {\it adjacent catenary degree} of $H$ is defined as
\[
\mathsf c_{\adj} (H) =\sup\{\mathsf c_{\adj} (b)\mid b\in H\}\in\mathbb N_0\cup
\{\infty\}\,.
\]
Obviously, we have $\mathsf c_{\adj} (H) \le \delta (H)$,
\[
\mathsf c (a)\le\mathsf c_{\mon} (a) =\sup\{\mathsf c_{\eq}
(a),\mathsf c_{\adj} (a)\}\le\sup\mathsf L (a)\quad\text{for
all}\quad a\in H\,,
\]
and hence
\begin{equation}
\mathsf c (H)\le\mathsf c_{\mon} (H) =\sup\{\mathsf c_{\eq}
(H),\mathsf c_{\adj} (H)\}\,.\label{basic2}
\end{equation}
Note that $\mathsf c_{\adj} (H) = 0$ if and only if $H$ is half-factorial and if this holds, then $\mathsf c_{\eq} (H)=\mathsf c (H)$. If $H$ is not half-factorial, then (\ref{E:Dist}) shows that $1+\sup\Delta (H)\le\mathsf c (H)$. Moreover, $\mathsf c_{\eq}(H) = 0$ if and only if for all $a\in H$ and all $k\in\mathsf L (a)$ we have $|\mathsf Z_k (a)| = 1$. A result by Coykendall and Smith implies that
for the multiplicative monoid $H$ of non-zero elements from a domain we have $\mathsf c_{\eq}(H) = 0$ if and only if
$H$ is factorial (\cite[Corollary 2.12]{Co-Sm11a}).

\medskip
\noindent
{\bf Structure of sets of lengths}. Let $d\in\N$,\ $M\in\N_0$\ and\ $\{0,d\}\subset\mathcal D
\subset [0,d]$. Then $L$ is called an {\it almost
arithmetical multiprogression}\ ({\rm AAMP}\ for
short)\ with\ {\it difference}\ $d$,\ {\it period}\ $\mathcal D$,
\ and\ {\it bound}\ $M$,\ if
\[
L = y + (L'\cup L^*\cup L'')\,\subset\,y +\mathcal D + d\Z
\]
where
\begin{itemize}
\item $L^*$ is finite and nonempty with $\min L^* = 0$ and $L^* =(\mathcal D + d\Z)\cap [0,\max L^*]$
\item $L'\subset [-M,-1]$\ and\ $L''\subset\max L^* + [1,M]$
\item $y\in\Z$.
\end{itemize}
Note that an AAMP is finite and nonempty. It is straightforward to prove that if $M\in \N_0, \ d\in\mathbb{N}$, $L$ is an AAMP with bound $M$ and difference $d$, $x\in L$ and $y\in d\mathbb{Z}$ are such that $\min L+M\leq x+y\leq\max L-M$, then $x+y\in L$.

We say that the\ {\it Structure Theorem for Sets of Lengths} holds (for the monoid $H$)\ if $H$ is atomic and there exist some $M\in\N_0$ and a finite nonempty set $\Delta \subset \N$ such that for every $a\in H$, the set of lengths $\mathsf L (a)$ is an {\rm AAMP} with some difference $d\in\Delta $ and bound $M$.
Suppose that the Structure Theorem for Sets of Lengths holds for the monoid $H$. Then $H$ is a \BF-monoid with finite set of distances. The monoids of ideals (studied in Section \ref{5}) satisfy the Structure Theorem for Sets of Lengths and we use this property to show that their monotone catenary degree is finite (see Theorem \ref{3.8}).

\medskip
\noindent
{\bf Unions of sets of lengths.} For every $k\in \N$,
\[
\mathcal U_k (H) = \bigcup_{L\in \mathcal L (H) \ \text{with} \ k\in L } L
\]
denotes the {\it union of all sets of lengths containing $k$}, provided that $H \ne H^{\times}$. In the extremal case where $H = H^{\times}$, it is convenient to set $\mathcal U_k (H) = \{k\}$ for all $k\in \N$. Furthermore,
\[
\rho_k (H) = \sup \mathcal U_k (H)\in \N_0 \cup \{\infty\}
\]
denotes the $k$-th {\it elasticity} of $H$. Unions of sets of lengths are a classic invariant in factorization theory, and the last decade has seen a renewed interest in the structure of unions (e.g., \cite{F-G-K-T17, Ga-Ge09b, Tr18a}).

We say that the \ {\it Structure Theorem for Unions} \ holds (for the monoid $H$) if there are $d\in \N$ and $M\in \N_0$ such that, for all sufficiently large $k\in \N$, $\mathcal U_k (H)$ has the form
\[
\mathcal U_k (H) = y_k + (L_k' \cup L_k^* \cup L_k'') \subset y_k + d \Z
\]
where $L_k^*$ is a nonempty arithmetical progression with difference $d$ such that $\min L_k^*=0$, $L_k' \subset [-M,-1]$, $L_k'' \subset \sup L_k^* + [1,M]$ (with the convention that $L_k''=\emptyset$ if $L_k^*$ is infinite), and $y_k\in \Z$. Note, if $\mathcal U_k (H)$ is finite, then $\mathcal U_k (H)$ is an AAMP with period $\{1,d\}$ and bound $M$.

Suppose that $\Delta (H)$ is finite and the Structure Theorem for Unions holds. Then, by \cite[Corollary 2.3 and Lemma 2.12]{F-G-K-T17}, the parameter $d$ in the above definition satisfies $d = \min \Delta (H)$ and we have
\[
\lim_{k \to\infty}\frac{|\mathcal U_k (H)|}{k} = \frac{1}{d} \Big( \rho (H) - \frac{1}{\rho (H)} \Big) \,.
\]

\medskip
\section{Finitely generated monoids and finite direct products} \label{3}
\medskip

Let $H$ be a monoid and $\pi \colon \mathsf Z (H) \to H_{\red}$ the canonical epimorphism. Its monoid of relations, defined as
\[
\sim_H \ = \{ (x,y)\in \mathsf Z (H) \times \mathsf Z (H) \mid \pi (x)= \pi (y) \} \,,
\]
is a crucial tool for studying the arithmetic of $H$. Suppose that $H$ is reduced, cancellative, and atomic. Then $\sim_H \ \subset \mathsf Z (H) \times \mathsf Z (H)$ is a saturated submonoid and hence a Krull monoid (\cite[Lemma 11]{Ph10a}). If, moreover, $H$ is finitely generated, then $\sim_H$ is finitely generated. However, this need not be true without the assumption that $H$ is cancellative (\cite[Remarks 3.11]{F-G-K-T17}) (although, by Redei's Theorem, $\ker ( \pi )$ is finitely generated as a congruence and $H$ is finitely presented). A further striking difference between the cancellative case and the non-cancellative case is, that cancellative finitely generated monoids have accepted elasticity which need not be the case in the noncancellative setting (for an example we refer again to \cite[Remarks 3.11]{F-G-K-T17}).

In spite of all these differences, we show in our next result that finitely generated monoids have finite successive distance and finite monotone catenary degree (both results are known in the cancellative setting, \cite[Theorem 3.1.4]{Ge-HK06a} and \cite[Theorem 3.9]{Fo06a}).

\medskip
\begin{theorem} \label{3.1}
Let $H$ be a monoid such that $H_{\red}$ is finitely generated. Then $\delta (H) <\infty$ and $\mathsf c_{\mon} (H) <\infty$.
\end{theorem}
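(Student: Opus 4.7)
The plan is to combine Dickson's lemma on $\N_0^{2s}$ with Redei's theorem on finitely generated congruences. Since $H_{\red}$ is finitely generated and unit-cancellative, the atom set $\mathcal{A}(H_{\red}) = \{u_1, \ldots, u_s\}$ is finite, so $\mathsf Z(H) \cong \N_0^s$ is free and every subset of $\mathsf Z(H) \times \mathsf Z(H) \cong \N_0^{2s}$ has only finitely many minimal elements under the componentwise order. As a first step, Redei's theorem produces finitely many generating pairs $(x_1, y_1), \ldots, (x_n, y_n)$ for the congruence $\sim_H = \{(z, z') : \pi(z) = \pi(z')\}$; rewriting any $z \sim_H z'$ along these generators yields $\mathsf c(H) \le M_0 := \max_i \max(|x_i|, |y_i|)$, and the inequality $1 + \sup \Delta(H) \le \mathsf c(H)$ forces $\Delta(H)$ to be finite.

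Next, for $\delta(H) < \infty$, for each $j \in \Z$ I consider $R_j = \{(z, z') \in \sim_H : |z'| - |z| = j\} \subset \N_0^{2s}$. By Dickson, $R_j$ has finitely many minimal elements, and only those $j$ with $j \in \Delta(H) \cup (-\Delta(H))$ are relevant (a finite set by the previous step). Let $M_1$ be the maximum of $\max(|x|, |y|)$ taken over all minimal elements $(x, y)$ of the relevant $R_j$. Given $z \in \mathsf Z_k(a)$ and any $z_0 \in \mathsf Z_\ell(a)$ with $\ell$ adjacent to $k$ in $\mathsf L(a)$, Dickson furnishes a minimal $(x, y) \le (z, z_0)$ in $R_{\ell - k}$; writing $z = x \cdot t$ in the free monoid, the factorization $z' := y \cdot t$ satisfies $\pi(z') = \pi(z)$ (because $\pi(x) = \pi(y)$), $|z'| = \ell$ (because $|y| - |x| = \ell - k$), and $\mathsf d(z, z') \le \max(|x|, |y|) \le M_1$. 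The symmetric construction starting from $z_0$ yields the reverse bound, hence ${\rm Dist}(\mathsf Z_k(a), \mathsf Z_\ell(a)) \le M_1$ and $\delta(H) \le M_1$.

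Finally, for $\mathsf c_{\mon}(H) < \infty$, equation $(\ref{basic2})$ together with $\mathsf c_{\adj}(H) \le \delta(H) < \infty$ reduces the task to bounding $\mathsf c_{\eq}(H)$. Applying the same Dickson argument to $R_0$, every pair $z, z' \in \mathsf Z_k(a)$ with $z \ne z'$ admits a length-preserving rewrite $z \to z''$ at distance at most some constant $M_2$. The main obstacle is proving that iterating this rewrite actually connects $z$ to $z'$ in finitely many steps: in the cancellative setting one reduces to $\gcd(z, z') = 1$ and descends in the measure $|z| + |z'|$, but this reduction is unavailable in the unit-cancellative case because $(z \gcd(z, z')^{-1}, z' \gcd(z, z')^{-1})$ need not lie in $\sim_H$. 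My plan to overcome this is a more careful Noetherian argument on the set of ``counterexample pairs'' $(z, z')$ in $\N_0^{2s}$, combining the finite congruence presentation from Redei's theorem with the finiteness of each $\mathsf Z_k(a)$ to arrange a length-preserving rewriting sequence that terminates at $z'$.
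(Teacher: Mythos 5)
Your argument for $\delta(H) < \infty$ is essentially the paper's: you apply Dickson's theorem to pairs in $\mathsf Z(H) \times \mathsf Z(H) \cong \N_0^{2s}$ with a fixed length difference, pass to a minimal element, and translate by the cofactor (the paper's sets $R_d^{\pm}$ carry an extra adjacency condition that is not essential to the argument). Deducing $\Delta(H) < \infty$ from Redei's theorem together with $1+\sup\Delta(H) \le \mathsf c(H)$ is likewise a correct alternative to the paper's appeal to \cite[Proposition 3.4]{F-G-K-T17}.

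The gap you flag for $\mathsf c_{\eq}(H) < \infty$ is real, and it is the crux of the theorem. Applying Dickson to $R_0$ gives, for $(z,z')$, a minimal $(x,y)\le(z,z')$ in $R_0$ and a bounded rewrite $z = xt \to z'' := yt$, but there is no evident measure under which iterating this reaches $z'$: as you correctly observe, in a merely unit-cancellative monoid $(z\gcd(z,z')^{-1}, z'\gcd(z,z')^{-1})$ need not lie in $\sim_H$, so the usual cancellative descent on $|z|+|z'|$ is unavailable. The paper's resolution is substantially different from the plan you sketch and does not run a Noetherian induction on pairs $(z,z')\in\N_0^{2s}$ directly. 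It introduces the monoid $S = \{(x,y) \in \mathsf Z(H)\times\mathsf Z(H) : |x|=|y|\ \text{and}\ \pi(xz)=\pi(yz)\ \text{for some}\ z\in\mathsf Z(H)\}$, which encodes \emph{eventual} equality rather than equality, shows $S$ is a finitely generated saturated submonoid of $\mathsf Z(H)\times\mathsf Z(H)$, and then for each $(x,y)\in S$ studies the $s$-ideals $A(x,y)=\{z:\pi(xz)=\pi(yz)\}$ and $B(x,y)$ (built from the $A(x',y')$ for proper $S$-divisors $(x',y')$ of $(x,y)$). An ascending-chain argument on these $s$-ideals (assertion {\bf A1} in the paper) shows that $\{(x,y)\in S : A(x,y)\not\subset B(x,y)\}$ is finite, producing the constant $K$, and a second induction on $|x|$ (assertion {\bf A2}) proves that $xz$ and $yz$ can be joined by a monotone $K$-chain whenever $z\in A(x,y)$; taking $z=1$ gives $\mathsf c_{\eq}(H)\le K$. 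The auxiliary objects $S$, $A$, $B$ are precisely what make the Noetherian argument terminate, and your proposal does not supply a substitute for them; consequently the proof of $\mathsf c_{\eq}(H)<\infty$, and hence of $\mathsf c_{\mon}(H)<\infty$ via \eqref{basic2}, remains incomplete.
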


\begin{proof}
By \cite[Proposition 3.4]{F-G-K-T17}, $H$ is a BF-monoid with finite set of distances. Without restriction we may suppose that $H$ is reduced and that $\mathcal A (H)= \{u_1, \ldots, u_s\}$ is nonempty. Let $\pi \colon \mathsf Z (H) \to H$ be the factorization homomorphism. The homomorphism
\[
f \colon \mathsf Z (H) \times \mathsf Z (H) \to (\N_0^s \times \N_0^s, +),
\left(\prod_{i=1}^s u_i^{m_i}, \prod_{i=1}^s u_i^{n_i}\right) \mapsto \bigl( (m_i)_{i=1}^s, (n_i)_{i=1}^s \bigr).
\]
is an isomorphism. By Dickson's Theorem (\cite[Theorem 1.5.3]{Ge-HK06a}), every subset $A \subset \N_0^{2s}$ has only finitely many minimal points.

\smallskip
1. For every distance $d\in \Delta(H)$, we define subsets \ $R_d^+,\, R_d^- \subset \mathsf Z(H) \time \mathsf Z(H)$ as follows: let $R_d^{\pm}$ \ consist of all $(z,y)\in \mathsf Z(H) \time \mathsf Z(H)$ such that

\begin{enumerate}
\item[] $\pi(z) = \pi(y)$, \ $|z|$ and $|y|$ are adjacent lengths of $\pi(z)$, and $|y| = |z| \pm d$.
\end{enumerate}

\smallskip
\noindent
The sets $M^\pm_d = \text{\rm Min} \bigl( f (R^\pm_d)\bigr)$ of minimal points of $f (R^\pm_d)$ are finite. We set
\[
\delta^* = \max \bigl\{|z'|,\, |y'|\; \bigm| \; (z',y')\in
f^{-1}( M^+_d \cup M^-_d), \ d\in \Delta (H) \bigr\} \,,
\]
and assert that \ $\delta (z) \le \delta^*$ \ for all $z\in \mathsf Z(H)$.

Let $z\in \mathsf Z(H)$ and $k\in \N$ be such that $k$ and $|z|$ are adjacent lengths of $\pi(z)$, and let $y_0\in \mathsf Z(H)$ be any factorization with $\pi(y_0) = \pi(z)$ and $|y_0| = k$. Then we have $k = |z| \pm d$ for some $d \in \Delta (H)$ and $(z,y_0) \in R_d^\pm$. Let $(z', y') \in f^{-1}(M^\pm_d)$ be such that $f (z',y') \le f (z,y_0)$. Then $z = z' z_1$ and $y_0 = y'y_1$ for some $z_1,\, y_1 \in \mathsf Z(H)$, and we set $y = y'z_1 \in \mathsf Z(H)$. Then $\pi(y) = \pi (y') \pi (z_1) = \pi (z') \pi (z_1) = \pi (z)$,
\[
|y| = |y'| + |z_1| = |z| + |y'|-|z'| = |z| \pm d = k\,, \text{ and } \ \mathsf d(z,y) \le \max \{|z'|,|y'|\} \le \delta^*\,.
\]

\smallskip
2. Since $\mathsf c_{\adj} (H) \le \delta (H) < \infty$, it remains to show that $\mathsf c_{\eq} (H) < \infty$.
Consider the monoid
\[
S = \{(x,y)\in\mathsf Z(H)\times\mathsf Z(H)\mid |x|=|y|\ \text{and}\ \pi(xz)=\pi(yz)\text{ for some }z\in\mathsf Z(H)\}\,,
\]
and set $S^* = S \setminus \{(1,1)\}$.
First we show that $S$ is a saturated submonoid of $\mathsf Z(H)\times\mathsf Z(H)$. Clearly, $S$ is a submonoid of $\mathsf Z(H)\times\mathsf Z(H)$. Let $(x,y),(x^{\prime},y^{\prime})\in S$ and $(x^{\prime\prime},y^{\prime\prime})\in\mathsf Z(H)\times\mathsf Z(H)$ be such that $(x,y)=(x^{\prime}x^{\prime\prime},y^{\prime}y^{\prime\prime})$. There are some $z,z^{\prime}\in\mathsf Z(H)$ such that $\pi(xz)=\pi(yz)$ and $\pi(x^{\prime}z^{\prime})=\pi(y^{\prime}z^{\prime})$. Set $z^{\prime\prime}=x^{\prime}y^{\prime}z^{\prime}z$. Then $\pi(y^{\prime\prime}z^{\prime\prime})=\pi(yzx^{\prime}z^{\prime})=\pi(yz)\pi(x^{\prime}z^{\prime})=\pi(xz)\pi(y^{\prime}z^{\prime})=\pi(xzy^{\prime}z^{\prime})=\pi(x^{\prime\prime}z^{\prime\prime})$. Moreover, $|x^{\prime\prime}|=|x|-|x^{\prime}|=|y|-|y^{\prime}|=|y^{\prime\prime}|$, and thus $(x^{\prime\prime},y^{\prime\prime})\in S$.

Recall that a subset $X\subset\mathsf Z(H)$ is an $s$-ideal of $\mathsf Z(H)$ if $xy\in X$ for all $x\in X$ and $y\in\mathsf Z(H)$. Note that $\mathsf Z(H)$ and $\mathsf Z(H)\times\mathsf Z(H)$ are finitely generated reduced cancellative monoids. Consequently, $\mathsf Z(H)$ satisfies the ascending chain condition on $s$-ideals by \cite[Proposition 2.7.4]{Ge-HK06a} and $S$ is a finitely generated monoid by \cite[Proposition 2.7.5.1]{Ge-HK06a}. For $(x,y)\in S$ set
\[
{\rm A}(x,y)=\{z\in\mathsf Z(H)\mid\pi(xz)=\pi(yz)\},\ \text{and}
\]
\[
{\rm B}(x,y)=\{z\in\mathsf Z(H)\mid (x^{\prime},y^{\prime})\mid_S (x,y)\ \text{and}\ \pi(x^{\prime}z)=\pi(y^{\prime}z)\ \text{for some}\ (x^{\prime},y^{\prime})\in S^*\setminus\{(x,y)\}\}.
\]
Observe that if $(x,y),(v,w)\in S$ are such that $(v,w)\mid_S (x,y)$, then ${\rm A}(x,y)$ and ${\rm B}(x,y)$ are $s$-ideals of $\mathsf Z(H)$, ${\rm B}(v,w)\subset {\rm B}(x,y)$ and if $(1,1)\not=(v,w)\not=(x,y)$, then ${\rm A}(v,w)\subset {\rm B}(x,y)$.

\begin{enumerate}
\item[{\bf A1.}] For all $t\in\mathbb{N}$, $a\in S$ and $(b_i)_{i=1}^t\in S^t$ there is some $N\in\mathbb{N}$ such that for all $(k_i)_{i=1}^t\in\mathbb{N}_0^t$ and $j\in [1,t]$ with $k_j\geq N$ it follows that ${\rm B}(a\prod_{i=1}^t b_i^{k_i})={\rm B}(ab_j^N\prod_{i=1,i\not=j}^t b_i^{k_i})$.
\end{enumerate}

{\it Proof of} \ {\bf A1.} We proceed by induction on $t$. Let $t\in\mathbb{N}$, $a\in S$ and $(b_i)_{i=1}^t\in S^t$. Note that $({\rm B}(a\prod_{i=1}^t b_i^k))_{k\in\mathbb{N}_0}$ is an ascending chain of $s$-ideals of $\mathsf Z(H)$. Therefore, there is some $M\in\mathbb{N}$ such that for all $k\in\mathbb{N}_0$ with $k\geq M$ it follows that ${\rm B}(a\prod_{i=1}^t b_i^k)={\rm B}(a\prod_{i=1}^t b_i^M)$.

By the induction hypothesis there is some $(N_{(r,g)})_{(r,g)\in [1,t]\times [0,M-1]}\in\mathbb{N}^{[1,t]\times [0,M-1]}$ such that for all $r\in [1,t]$, $g\in [0,M-1]$, $(k_i)_{i=1}^t\in\mathbb{N}_0^t$ and $j\in [1,t]\setminus\{r\}$ such that $k_r=g$ and $k_j\geq N_{(r,g)}$ it follows that ${\rm B}(a\prod_{i=1}^t b_i^{k_i})={\rm B}(ab_j^{N_{(r,g)}}\prod_{i=1,i\not=j}^t b_i^{k_i})$. Set $N=\max\{N_{(r,g)}\mid (r,g)\in [1,t]\times [0,M-1]\}\cup\{M\}$. Let $(k_i)_{i=1}^t\in\mathbb{N}_0^t$ and $j\in [1,t]$ be such that $k_j\geq N$.

\smallskip
\noindent
CASE 1: \, $k_r \geq M$ for all $r \in [1,t]$.

Set $k=\max\{k_i\mid i\in [1,t]\}$. It follows that ${\rm B}(a\prod_{i=1}^t b_i^M)\subset {\rm B}(ab_j^N\prod_{i=1,i\not=j}^t b_i^{k_i})\subset {\rm B}(a\prod_{i=1}^t b_i^{k_i})\subset {\rm B}(a\prod_{i=1}^t b_i^k)={\rm B}(a\prod_{i=1}^t b_i^M)$, and thus ${\rm B}(a\prod_{i=1}^t b_i^{k_i})={\rm B}(ab_j^N\prod_{i=1,i\not=j}^t b_i^{k_i})$.

\smallskip
\noindent
CASE 2: \, $k_r<M$ for some $r\in [1,t]$.

Note that $k_j\geq N\geq M>k_r$, hence $j\not=r$ and $k_j\geq N\geq N_{(r,k_r)}$. We infer that ${\rm B}(a\prod_{i=1}^t b_i^{k_i})={\rm B}(ab_j^{N_{(r,k_r)}}\prod_{i=1,i\not=j}^t b_i^{k_i})$. Since ${\rm B}(ab_j^{N_{(r,k_r)}}\prod_{i=1,i\not=j}^t b_i^{k_i})\subset {\rm B}(ab_j^N\prod_{i=1,i\not=j}^t b_i^{k_i})\subset {\rm B}(a\prod_{i=1}^t b_i^{k_i})$ we have ${\rm B}(a\prod_{i=1}^t b_i^{k_i})={\rm B}(ab_j^N\prod_{i=1,i\not=j}^t b_i^{k_i})$.\qed ({\bf A1})

\smallskip
Since $S$ is finitely generated, there are some $t\in\mathbb{N}$ and $b_1, \ldots, b_t \in {S^*}$ such that $S$ is generated by $b_1, \ldots, b_t$. By ${\bf A1}$ there is some $N\in\mathbb{N}$ such that if $(k_i)_{i=1}^t\in\mathbb{N}_0^t$ and $j\in [1,t]$ are such that $k_j\geq N$, then ${\rm B}(\prod_{i=1}^t b_i^{k_i})={\rm B}(b_j^N\prod_{i=1,i\not=j}^t b_i^{k_i})$, and thus ${\rm A}(\prod_{i=1}^t b_i^{k_i})\subset {\rm B}(b_j\prod_{i=1}^t b_i^{k_i})={\rm B}(\prod_{i=1}^t b_i^{k_i})$.

This implies that $\{a\in S\mid {\rm A}(a)\not\subset {\rm B}(a)\}\subset\{\prod_{i=1}^t b_i^{k_i}\mid (k_i)_{i=1}^t\in\mathbb{N}_0^t,k_j< N$ for all $j\in [1,t]\}$, hence $\{a\in S\mid {\rm A}(a)\not\subset {\rm B}(a)\}$ is finite. Set $K=\max\{\mathsf d(x,y)\mid (x,y)\in S,{\rm A}(x,y)\not\subset {\rm B}(x,y)\}$.

\begin{enumerate}
\item[{\bf A2.}] For all $r\in\mathbb{N}_0$, $(x,y)\in S$ and $z\in {\rm A}(x,y)$ such that $|x|=r$ it follows that $xz$ and $yz$ can be concatenated by a monotone $K$-chain of factorizations of $\pi(xz)$.
\end{enumerate}

{\it Proof of} \ {\bf A2.} We proceed by induction on $r$. Let $r\in\mathbb{N}_0$, $(x,y)\in S$ and $z\in {\rm A}(x,y)$ be such that $|x|=r$.

\smallskip
\noindent
CASE 1: \, ${\rm A}(x,y)\not\subset {\rm B}(x,y)$.

We have $\mathsf d(xz,yz)=\mathsf d(x,y)\leq K$, and thus the assertion is trivially satisfied.

\smallskip
\noindent
CASE 2: \, ${\rm A}(x,y)\subset {\rm B}(x,y)$.

There are some $(x^{\prime},y^{\prime}),(x^{\prime\prime},y^{\prime\prime})\in S^*\setminus\{(x,y)\}$ such that $x=x^{\prime}x^{\prime\prime}$, $y=y^{\prime}y^{\prime\prime}$ and $\pi(x^{\prime}z)=\pi(y^{\prime}z)$. Clearly, $|x^{\prime}|<r$ and $|x^{\prime\prime}|<r$. Observe that $\pi(x^{\prime}x^{\prime\prime}z)=\pi(x^{\prime}z)\pi(x^{\prime\prime})=\pi(y^{\prime}z)\pi(x^{\prime\prime})=\pi(y^{\prime}x^{\prime\prime}z)$ and $\pi(x^{\prime\prime}y^{\prime}z)=\pi(x^{\prime\prime}x^{\prime}z)=\pi(y^{\prime\prime}y^{\prime}z)$, where the last equality holds because $z \in {\rm A} (x,y)$. We have $x^{\prime\prime}z\in {\rm A}(x^{\prime},y^{\prime})$ and $y^{\prime}z\in {\rm A}(x^{\prime\prime},y^{\prime\prime})$. Consequently, $xz=x^{\prime}x^{\prime\prime}z$ and $y^{\prime}x^{\prime\prime}z$ can be concatenated by a monotone $K$-chain of factorizations of $\pi(xz)$ by the induction hypothesis. It follows by analogy that $x^{\prime\prime}y^{\prime}z$ and $y^{\prime\prime}y^{\prime}z=yz$ can be concatenated by a monotone $K$-chain of factorizations of $\pi(x^{\prime\prime}y^{\prime}z)=\pi(xz)$. Therefore, $xz$ and $yz$ can be concatenated by a monotone $K$-chain of factorizations of $\pi(xz)$.\qed ({\bf A2})

\smallskip
It is sufficient to show that $\mathsf c_{\rm eq}(H)\leq K$. Let $a\in H$ and $x,y\in\mathsf Z(a)$ be such that $|x|=|y|$. Then $(x,y)\in S$ and $1\in {\rm A}(x,y)$. We infer by ${\bf A2}$ that $x$ and $y$ can be concatenated by a monotone $K$-chain of factorizations of $a$. Therefore, $\mathsf c_{\rm eq}(b)\leq K$ for each $b\in H$, hence $\mathsf c_{\rm eq}(H)\leq K$.
\end{proof}

\smallskip
We continue with an example (due to S. Tringali) of a finitely generated monoid whose monoid of equal-length relations is not finitely generated, although the monoid $S$, defined in the proof of Theorem \ref{3.1}.2, is finitely generated and  its equal catenary degree is finite.

\medskip
\begin{example} \label{3.2}
For an atomic monoid $H$,
\[
\sim_{H, \eq} \ = \{ (x,y) \in \mathsf Z (H) \times \mathsf Z (H) \mid \pi (x)=\pi(y) \ \text{and} \ |x|=|y| \}
\]
is the monoid of equal-length relations of $H$. If $H$ is cancellative, then $\sim_{H, \eq} \ \subset \ \sim_{H}$ is a saturated submonoid and hence a Krull monoid and it is finitely generated whenever $H_{\red}$ is finitely generated (\cite[Proposition 4.4]{Bl-Ga-Ge11a}).
We provide an example showing that $\sim_{H, \eq}$ need not be finitely generated if $H$ is not cancellative. Let
\[
\mathcal P_{{\rm fin},0}(\mathbb N_0) = \big\{ A \subset \N_0 \mid 0 \in A, \ A \subset \N_0 \ \text{is finite} \big\}
\]
be the monoid of finite subsets of $\N_0$ containing $0$, with set addition as operation. Clearly, $\{0\}$ is the zero-element of the monoid, and for every $A \subset \N_0$ and every $k \in \N$, $kA = A + \ldots + A$ means the $k$-fold sumset of $A$. Consider the submonoid $H$
generated by
\[
\{0, 1\}, \ A = \{0, 1, 3\}, \quad \text{ and} \quad B = \{0, 2, 3\} \,.
\]
 Of course, $H$ is a reduced, finitely generated, commutative semigroup with identity element, and it is unit-cancellative by \cite[Theorem 2.22(ii) and Proposition 3.3]{Fa-Tr18a}. Moreover, we have (by induction) that $\{0, 1\} + kA = \{0, 1\} + kB = [ 0, 3k+1 ]$ for all $k \in \mathbb N_0$. Since the monoid $H$ is written additively, $\mathsf Z (H) \times \mathsf Z (H)$ and $\sim_{H, \eq}$ will be written additively too.
 For every $k \in \mathbb N_0$, we define
\[
\mathfrak a_k = (\{0, 1\} + kA , \{0,1\} + kB) \in \ \sim_{H, \eq} \,,
\]
and assert that $\mathfrak a_k$ is an atom in ${\sim_{H, {\rm eq}}}$.

Assume to the contrary that there are $k \in \N$ and $\mathfrak b, \mathfrak c \in \ \sim_{H, \eq}$, distinct from the zero-element $(\{0\}, \{0\})$ of $\sim_{H, \eq}$, such that $\mathfrak a_k = \mathfrak b + \mathfrak c $. Since $\{0,1\}+ \ell A$, $\{0,1\} + \ell B$ are intervals for all $\ell \in \N$ but $\ell A$ and $\ell B$ are not intervals for any $\ell \in \N$, it follows that $(\{0,1\}, \{0,1\})$ must divide (in $\sim_{H, \eq}$) either $\mathfrak b$ or $\mathfrak c$, say it divides $\mathfrak b$. Thus we obtain that for some $q \in [0, k-1]$
\[
\mathfrak b = \big(\{0,1\}+qA, \{0,1\}+qB \big) \ \text{which implies that} \ \mathfrak c = \big( (k-q)A, (k-q)B \big) \,.
\]
However, since $k-q > 0$, we obtain that $1 \in (k-q)A$ but $1 \notin (k-q)B$, a contradiction to $\mathfrak c \in \ \sim_{H, \eq}$.
\end{example}

\medskip
The monoid of $v$-invertible $v$-ideals studied in Section \ref{5} is isomorphic to a finite direct product of finitely primary monoids and a free abelian factor. Our next goal in this section is to shift the finiteness of the monotone catenary degree of given monoids $H_1$ and $H_2$ to their direct product $H_1 \times H_2$. However, this does not hold true in general. We start with an example highlighting the problem.

\medskip
\begin{example} \label{3.3}
Let $k,d\in\N$ with $d\ge 10$, $y_{k,1},y_{k,2}\in\mathbb N_{\geq 2}$ and let
\[
L_{k,1} = y_{k,1}+\{\nu d\mid\nu\in [0,k]\}\cup\{kd+2\}\quad\text{ and}\quad L_{k,2} = y_{k,2}+\{\nu d\mid\nu\in [0,k]\}\cup\{kd+1,kd+3\}\,.
\]
We claim that both, $y_k+ kd+1$ and $y_k+kd+2$, have a unique representation in the sumset, where $y_k = y_{k,1}+y_{k,2}$. Indeed, if $y_k + kd + 1 = a_1+a_2$ with $a_1\in L_{k,1}$ and $a_2\in L_{k,2}$, then $a_1=y_{k,1}+0$ and $a_2=y_{k,2}+kd+1$.
If $y_k+ kd+2 = b_1+b_2$ with $b_1\in L_{k,1}$ and $b_2\in L_{k,2}$, then $b_1= y_{k,1}+ kd+2$ and $b_2= y_{k,2}+ 0$.
Thus $|a_1-b_1|\ge kd$ and $|a_2-b_2|\ge kd$.

Now suppose that $L_{k,1}$ and $L_{k,2}$ are realized as sets of lengths, say $L_{k,i} =\mathsf L(a_{k,i})$ with $a_{k,i}\in H_i$ for some atomic monoids $H_i$ and $i\in [1,2]$, and set $a_k = a_{k,1}a_{k,2}$ (there is a variety of realization results for sets of lengths guaranteeing the existence of $H_1$ and $H_2$; see, for example, \cite[Proposition 4.8.3]{Ge-HK06a}, \cite[Theorem 4.2]{Ge-Ha-Le07}, \cite{Sc09a}). Then
\[
\mathsf d\big(\mathsf Z_{y_k+kd+1}(a_k),\mathsf Z_{y_k+kd+2} (a_k)\big)\ge 2kd\,.
\]
\end{example}

\smallskip
For a further analysis of the problem, consider an atomic monoid $H$. If $a \in H$ and $k, \ell \in \mathsf L (a)$ are adjacent lengths with $k < \ell$, then by definition we have $\mathsf d \big( \mathsf Z_k (a), \mathsf Z_{\ell} (a) \big) \le \mathsf c_{\adj}(a) \le \mathsf c_{\adj} (H)$. However, if $\ell, m \in \mathsf L (a)$ are adjacent with $\ell < m$, then we cannot conclude that
\[
\mathsf d \big( \mathsf Z_k (a), \mathsf Z_{m} (a) \big) \le \mathsf d \big( \mathsf Z_k (a), \mathsf Z_{\ell} (a) \big) + \mathsf d \big( \mathsf Z_{\ell} (a), \mathsf Z_{m} (a) \big) \,.
\]
In order to be able to obtain an inductive upper bound for $\mathsf d \big( \mathsf Z_{k_0} (a), \mathsf Z_{k_r} (a) \big)$ for pairwise adjacent lengths $k_0 < \ldots < k_r$ in $\mathsf L (a)$, we introduce a new invariant.

\smallskip
\begin{definition} \label{3.4}
Let $H$ be an atomic monoid.
For an element $a \in H$, the {\it $($weak$)$ successive distance} $\delta_w (a)$ is the smallest $N \in \N_0 \cup \{\infty\}$ such that for all $k, \ell \in \mathsf L (a)$ we have $\mathsf d \big( \mathsf Z_k (a), \mathsf Z_{\ell} (a) \big) \le N | \ell - k|$. Then
\[
\delta_w (H) = \sup \{ \delta_w (a) \mid a \in H \} \in \N_0 \cup \{\infty\}
\]
is called the {\it $($weak$)$ successive distance} of $H$.
\end{definition}

\smallskip
Note that $\delta_w (H) = 0$ if and only if $H$ is half-factorial.
The invariant $\delta_w (H)$ is bounded above by $\delta (H)$ (as we outline in the next lemma). However, in Section \ref{5} we will meet monoids having finite weak successive distance but infinite strong successive distance (see Corollary \ref{5.7}, Proposition \ref{5.12}, and Remark \ref{5.3}.4).

\smallskip
\begin{lemma} \label{3.5}
Let $H$ be an atomic monoid.
\begin{enumerate}
\item $\delta_w (H) \le \delta (H)$.

\smallskip
\item Suppose that $\Delta(H)$ is finite. Then $\mathsf c_{\adj}(H)\le\delta_w (H)\max\Delta(H)$. In particular, if $\delta_w(H)<\infty$ and $\mathsf c_{\eq}(H) < \infty$, then $\mathsf c_{\mon}(H)<\infty$.
\end{enumerate}
\end{lemma}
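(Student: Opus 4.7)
The plan is to treat the two parts separately, both being short once the appropriate chain-of-lengths argument is set up.

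For (1), I would exploit the triangle inequality satisfied by $\mathsf d$ on the free abelian monoid $\mathsf Z(H) = \mathcal F(\mathcal A(H_{\red}))$. Fix $a \in H$ and $k, \ell \in \mathsf L(a)$ with $k < \ell$. Enumerate the elements of $\mathsf L(a) \cap [k, \ell]$ as $k = k_0 < k_1 < \cdots < k_r = \ell$, so that consecutive pairs $(k_{i-1}, k_i)$ are adjacent lengths of $a$; since each gap is at least $1$, we have $r \le \ell - k$. Starting from an arbitrary $z_0 \in \mathsf Z_{k_0}(a)$ and using the reformulation
\[
\delta(H) = \sup\{\mathrm{Dist}(\mathsf Z_k(b), \mathsf Z_\ell(b)) \mid b \in H,\ k,\ell \in \mathsf L(b)\ \text{adjacent}\}
\]
recorded in Section~\ref{2}, I would inductively pick $z_i \in \mathsf Z_{k_i}(a)$ with $\mathsf d(z_{i-1}, z_i) \le \delta(H)$. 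The triangle inequality then gives $\mathsf d(z_0, z_r) \le r \cdot \delta(H) \le (\ell - k)\,\delta(H)$, so $\delta_w(a) \le \delta(H)$, and taking the supremum over $a$ yields the claim.

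For the inequality in (2), I would observe that if $k, \ell \in \mathsf L(a)$ are adjacent with $k < \ell$, then $\ell - k \in \Delta(\mathsf L(a)) \subset \Delta(H)$, hence $\ell - k \le \max \Delta(H)$. The definition of $\delta_w$ gives $\mathsf d(\mathsf Z_k(a), \mathsf Z_\ell(a)) \le \delta_w(a)\,(\ell - k) \le \delta_w(H) \max \Delta(H)$. Taking the supremum over all $a \in H$ and all adjacent pairs produces $\mathsf c_{\adj}(H) \le \delta_w(H)\max \Delta(H)$. The ``in particular'' assertion is then immediate from the identity $\mathsf c_{\mon}(H) = \sup\{\mathsf c_{\eq}(H), \mathsf c_{\adj}(H)\}$ stated in~(\ref{basic2}), since the finiteness of $\Delta(H)$ forces $\max\Delta(H)<\infty$.

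I do not anticipate any serious obstacle. The only point meriting care is that the iteration in (1) treats $\mathsf d$ as a genuine pseudometric; this is automatic because $\mathsf Z(H)$ is a free abelian monoid even when $H$ is merely unit-cancellative, so the triangle inequality for $\mathsf d$ follows by the standard argument on the supports of factorizations.
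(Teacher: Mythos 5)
Your proof is correct and follows essentially the same route as the paper: both parts rest on chaining through the adjacent lengths in $\mathsf L(a)\cap[k,\ell]$, applying $\delta(H)$ (you via the $\mathrm{Dist}$ reformulation, the paper via the definition of $\delta(z)$ — equivalent) and the triangle inequality for $\mathsf d$ on the free abelian monoid $\mathsf Z(H)$. The only cosmetic difference is that the paper disposes of the half-factorial case up front, whereas you absorb the degenerate $k=\ell$ case implicitly; this is harmless.
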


\begin{proof}
If $H$ is half-factorial, then $\Delta(H)=\emptyset$ and $\mathsf c_{\adj}(H)=\max\Delta(H)=\delta_w(H)=0$ whence all assertions hold. Now we suppose that $H$ is not half-factorial.

\smallskip
1. Clearly, it is sufficient to show that $\delta_w(a)\le\delta(H)$ for every $a \in H$. Let $a\in H$ and $k, \ell \in \mathsf L (a)$ be given, say $k < \ell$. Then there are pairwise adjacent lengths $k=k_0 < k_1 < \ldots < k_r = \ell$ of $\mathsf L (a)$. By definition of $\delta (H)$, there are factorizations $z_0, \ldots, z_r$ such that $|z_i|=k_i$ for $i \in [0,r]$ and $\mathsf d (z_{i-1}, z_i) \le \delta (H)$ for all $i \in [1,r]$. Therefore, it follows that
\[
\mathsf d \big( \mathsf Z_{k_0} (a), \mathsf Z_{k_r} (a) \big) \le \mathsf d (z_0, z_{r}) \le \sum_{i=1}^r \mathsf d (z_{i-1},z_i) \le r \delta (H) \le |\ell - k| \delta (H) \,.
\]

\smallskip
2. If $a \in H$ and $k, \ell \in \mathsf L (a)$ are adjacent, then $\mathsf d \big( \mathsf Z_k (a), \mathsf Z_{\ell} (a) \big) \le \delta_w (a) \max \Delta \big( \mathsf L (a) \big)$ whence $\mathsf c_{\adj} (a) \le \delta_w (a) \max \Delta \big( \mathsf L (a) \big)$. Thus it follows that $\mathsf c_{\adj} (H) \le \delta_w (H) \max \Delta (H)$, and the in particular statement follows immediately.
\end{proof}

\medskip
\begin{lemma}\label{3.6}
Let $H$ be an atomic monoid satisfying the Structure Theorem for Sets of Lengths and suppose that the following conditions hold{\rm \,:}
\begin{itemize}
\item[(a)] There are some $M_1,C_1\in\mathbb{N}$ such that for each $a\in H$ and all adjacent $k, \ell\in\mathsf L(a)$ such that $\max\{k,\ell\}+M_1\leq\max\mathsf L(a)$ it follows that ${\rm Dist}(\mathsf Z_k(a),\mathsf Z_{\ell} (a))\leq C_1$.

\smallskip
\item[(b)] For every $N\in\mathbb{N}$ there is some $C_2\in\mathbb{N}$ such that for all $a\in H$ and $k, \ell\in\mathsf L(a)$ for which $\min\{k, \ell\}+N\geq\max\mathsf L(a)$ it follows that $\mathsf d(\mathsf Z_k(a),\mathsf Z_{\ell} (a))\leq C_2|\ell - k|$.
\end{itemize}
Then $\delta_w (H) < \infty$.
\end{lemma}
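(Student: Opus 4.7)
The plan is to combine hypotheses (a) and (b) with the AAMP structure provided by the Structure Theorem for Sets of Lengths to produce a constant depending only on $H$ that serves as a uniform upper bound for $\delta_w(a)$. Concretely, I will show $\delta_w(H)\le C_1+C_2$, where $C_2$ is the constant furnished by (b) for a suitable $N$ read off from the Structure Theorem.

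First, by the Structure Theorem there exist a finite set $\Delta^*\subset\N$ and some $M_0\in\N_0$ such that every $\mathsf L(a)$ is an AAMP with some difference $d\in\Delta^*$ and bound $M_0$. Inspection of the definition shows that any two consecutive elements of $\mathsf L(a)$ differ by at most $D:=\max\{\max\Delta^*, M_0\}$. Set $N:=M_1+D$, let $C_2$ be the constant given by (b) for this $N$, fix $a\in H$ and $k<\ell$ in $\mathsf L(a)$, and write $K:=\max\mathsf L(a)$. I split into three cases.

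In Case 1, where $\ell+M_1\le K$, I pick a maximal chain of pairwise adjacent lengths $k=k_0<k_1<\cdots<k_r=\ell$ in $\mathsf L(a)$, so $r\le\ell-k$ and every pair satisfies the hypothesis of (a). Starting from an arbitrary prescribed $x_r\in\mathsf Z_\ell(a)$, I walk backward along this chain, at each step using $\mathrm{Dist}(\mathsf Z_{k_{i-1}}(a),\mathsf Z_{k_i}(a))\le C_1$ to choose $x_{i-1}\in\mathsf Z_{k_{i-1}}(a)$ with $\mathsf d(x_{i-1},x_i)\le C_1$. The triangle inequality for $\mathsf d$ produces $x_0\in\mathsf Z_k(a)$ with $\mathsf d(x_0,x_r)\le rC_1\le C_1(\ell-k)$, and crucially this holds for any choice of $x_r$. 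In Case 2a, where $k\ge K-N$, hypothesis (b) directly gives $\mathsf d(\mathsf Z_k(a),\mathsf Z_\ell(a))\le C_2(\ell-k)$.

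The delicate Case 2b is when $k<K-N$ and $\ell+M_1>K$. I let $\ell^*$ be the largest element of $\mathsf L(a)$ satisfying $\ell^*\le K-M_1$, which exists because $k$ qualifies; the gap bound forces $\ell^*>K-M_1-D=K-N$, so Case 2a applies to the pair $\ell^*,\ell$. Pick $y\in\mathsf Z_{\ell^*}(a)$ and $z\in\mathsf Z_\ell(a)$ realizing $\mathsf d(y,z)=\mathsf d(\mathsf Z_{\ell^*}(a),\mathsf Z_\ell(a))\le C_2(\ell-\ell^*)$, and apply the Case 1 construction with $\ell$ replaced by $\ell^*$ and starting factorization $x_r:=y$; this gives $x\in\mathsf Z_k(a)$ with $\mathsf d(x,y)\le C_1(\ell^*-k)$. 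One triangle inequality then yields $\mathsf d(x,z)\le C_1(\ell^*-k)+C_2(\ell-\ell^*)\le(C_1+C_2)(\ell-k)$. In all three cases $\delta_w(a)\le C_1+C_2$, so $\delta_w(H)<\infty$. The main obstacle is precisely Case 2b: hypothesis (a) provides a $\mathrm{Dist}$-bound on adjacent pairs while (b) supplies only a $\mathsf d$-bound, so the two chains cannot be freely concatenated; the idea is to anchor the (a)-chain at the particular factorization $y$ that realizes the (b)-distance, so a single triangle inequality closes the argument.
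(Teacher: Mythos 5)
Your proof is correct and follows essentially the same strategy as the paper's: the same three-way case split according to how close $k$ and $\ell$ lie to $\max\mathsf L(a)$, and in the middle case the same anchoring trick of starting the hypothesis-(a) chain at the particular factorization realizing the hypothesis-(b) distance, so that one triangle inequality closes the estimate. The only cosmetic difference is how the anchor length is produced from the AAMP structure --- the paper shifts the last ``safe'' length upward by a fixed multiple $M_0M_1$ using the translation property of AAMPs, while you take the largest length $\le\max\mathsf L(a)-M_1$ and invoke the AAMP gap bound --- which is a minor variant of the same argument (and yields $C_1+C_2$ rather than the paper's $\max\{C_1,C_2\}$).
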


\begin{proof}
Without restriction we may suppose that $H$ be reduced. We start with the following assertion.

\begin{enumerate}
\item[{\bf A.}\,] There is some $M_0\in\mathbb{N}$ such that for all $a\in H$ and $k\in\mathsf L(a)$ with $k+2M_0\leq\max\mathsf L(a)$ we have $k+M_0\in\mathsf L(a)$.
\end{enumerate}

{\it Proof of} \ {\bf A.} Since $H$ satisfies the Structure Theorem for Sets of Lengths there is some $N\in\mathbb{N}$ and some finite nonempty $\Delta\subset\mathbb{N}$ such that for all $a\in H$, $\mathsf L(a)$ is an AAMP with bound $N$ and difference in $\Delta$. Set $M_0=N\prod_{d\in\Delta} d$. Let $a\in H$ and $k\in\mathsf L(a)$ be such that $k+2M_0\leq\max\mathsf L(a)$. There is some $d\in\Delta$ such that $\mathsf L(a)$ is an AAMP with bound $N$ and difference $d$. Note that $\min\mathsf L(a)+M_0\leq k+M_0\leq\max\mathsf L(a)-M_0$. Since $k\in\mathsf L(a)$ and $M_0\in d\mathbb{Z}$ we infer that $k+M_0\in\mathsf L(a)$. \qed ({\bf A})

\smallskip
Set $M=M_0M_1$. It follows easily by induction that for all $a\in H$ and $k\in\mathsf L(a)$ such that $k+2M\leq\max\mathsf L(a)$ it follows that $k+M\in\mathsf L(a)$. There is some $C_2\in\mathbb{N}$ such that for all $k, \ell\in\mathsf L(a)$ for which $\min\{k, \ell\}+2M\geq\max\mathsf L(a)$ it follows that $\mathsf d(\mathsf Z_k(a),\mathsf Z_{\ell}(a))\leq C_2|\ell - k|$.

We assert that $\delta_w (H) \le \max\{C_1,C_2\}$.
Let $a\in H$ and $k,\ell\in\mathsf L(a)$, say $k<\ell$. Let $\mathsf L (a) \cap [k, \ell] = \{k_0, \ldots, k_r\}$ with $k=k_0 < \ldots < k_r = \ell$.

\smallskip
\noindent
CASE 1: \, $\ell+M\leq\max\mathsf L(a)$.

Note that ${\rm Dist}(\mathsf Z_{k_i}(a),\mathsf Z_{k_{i+1}}(a))\leq C_1$ for all $i\in [0,r-1]$. By definition, there are factorizations $z_i\in\mathsf Z_{k_i}(a)$ for every $i\in [0,r]$ such that $\mathsf d(z_j,z_{j+1})=\mathsf d(\{z_j\},\mathsf Z_{k_{j+1}}(a))$ for all $j\in [0,r-1]$ whence we infer that
\[
\mathsf d(\mathsf Z_k(a),\mathsf Z_\ell(a))\leq\mathsf d(z_0,z_r)\leq\sum_{j=0}^{r-1}\mathsf d(z_j,z_{j+1})\leq C_1r\leq \max\{C_1,C_2\}|\ell - k| \,.
\]

\smallskip
\noindent
CASE 2: \, $k+2M\geq\max\mathsf L(a)$.

It is clear that $\mathsf d(\mathsf Z_k(a),\mathsf Z_\ell(a))\leq C_2|\ell - k|\leq \max\{C_1,C_2\}|\ell - k|$.

\smallskip
\noindent
CASE 3: \, $\ell+M>\max\mathsf L(a)$ and $k+2M<\max\mathsf L(a)$.

There is some maximal $n\in [0,r]$ such that $k_n+2M\leq\max\mathsf L(a)$. Observe that $k_n+M\in\mathsf L(a)$. Therefore, there is some $m\in [1,r-1]$ such that $k_m=k_n+M$. There are factorizations $z_0, \ldots, z_{m+1}$ such that $z_0\in\mathsf Z_\ell(a)$, $z_i\in\mathsf Z_{k_{m+1-i}}(a)$ for every $i\in [1,m+1]$, $\mathsf d(z_0,z_1)=\mathsf d(\mathsf Z_{k_m}(a),\mathsf Z_\ell(a))$ and $\mathsf d(z_j,z_{j+1})=\mathsf d(\{z_j\},\mathsf Z_{k_{m-j}}(a))$ for every $j\in [1,m]$. If $j\in [1,m]$, then $k_{m-j+1}+M\leq\max\mathsf L(a)$, and thus $\mathsf d(z_j,z_{j+1})\leq C_1$. Moreover, since $k_m+2M\geq\max\mathsf L(a)$ we have $\mathsf d(z_0,z_1)\leq C_2(\ell-k_m)$. This implies that
\[
\mathsf d(\mathsf Z_k(a),\mathsf Z_\ell(a))\leq\mathsf d(z_0,z_{m+1})\leq\sum_{i=0}^m\mathsf d(z_i,z_{i+1})\leq C_2(\ell-k_m)+C_1m\leq \max\{C_1,C_2\}|\ell - k| \,.
\]
\end{proof}

\medskip
\begin{lemma} \label{3.7}
Let $H = H_1 \times \ldots \times H_n$ where $n \in \N$ and $H_1, \ldots, H_n$ are atomic monoids.
\begin{enumerate}
\item $\Delta (H)$ is finite if and only if $\Delta (H_i)$ is finite for every $i \in [1,n]$.

\smallskip
\item The Structure Theorem for Sets of Lengths holds for $H$ if and only if it holds for $H_1, \ldots, H_n$.
\end{enumerate}
\end{lemma}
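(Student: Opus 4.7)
The plan is to exploit the sumset identity $\mathsf L(a)=\mathsf L(a_1)+\cdots+\mathsf L(a_n)$ for every $a=(a_1,\ldots,a_n)\in H$, which arises from the canonical isomorphism $\mathsf Z(H)\cong\mathsf Z(H_1)\times\cdots\times\mathsf Z(H_n)$. This in turn comes from the fact that, up to units, the atoms of $H_{\red}$ are precisely the coordinate embeddings of the atoms of the individual $H_{i,\red}$. The ``only if'' direction of both (1) and (2) then follows at once from the canonical embedding $H_i\hookrightarrow H$, $a_i\mapsto(1,\ldots,1,a_i,1,\ldots,1)$, which preserves sets of lengths: one obtains $\mathcal L(H_i)\subset\mathcal L(H)$, so that any AAMP structure with uniform bound and finite difference set for $\mathcal L(H)$ descends to each $\mathcal L(H_i)$, and similarly $\Delta(H_i)\subset\Delta(H)$.

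For the ``if'' direction of (1), I would set $D=\max_i\max\Delta(H_i)<\infty$ and show $\max\Delta(H)\le D$. Given $a\in H$ and $\ell\in\mathsf L(a)$ with $\ell<\max\mathsf L(a)$, pick a representation $\ell=\ell_1+\cdots+\ell_n$ with $\ell_i\in\mathsf L(a_i)$. Since $\sum_i\ell_i<\sum_i\max\mathsf L(a_i)=\max\mathsf L(a)$, some index $j$ satisfies $\ell_j<\max\mathsf L(a_j)$; replacing $\ell_j$ by its immediate successor $\ell_j^+\in\mathsf L(a_j)$ produces an element of $\mathsf L(a)$ strictly above $\ell$ at distance $\ell_j^+-\ell_j\le\max\Delta(H_j)\le D$. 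Hence the successor of $\ell$ in $\mathsf L(a)$ is within $D$ of $\ell$, whence $\Delta(H)\subset[1,D]$ is finite.

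For the ``if'' direction of (2), I would induct on $n$, the substantive case being $n=2$. Using (1), fix a common bound $M\in\N_0$ and a common finite difference set $\Delta^{**}\subset\N$ valid for both $H_1$ and $H_2$. For $a=(a_1,a_2)\in H$, $\mathsf L(a)=\mathsf L(a_1)+\mathsf L(a_2)$ is then a sumset of two AAMPs of bound $M$ with differences in $\Delta^{**}$. The heart of the proof is a combinatorial \emph{sumset-of-AAMPs lemma}: such a sumset is again an AAMP, with bound $M'$ depending only on $M$ and $\Delta^{**}$ and with difference in the finite set $\Delta^*:=\{\gcd(d_1,d_2)\mid d_1,d_2\in\Delta^{**}\}$. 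To prove it, I would decompose each $L_i=y_i+(L_i'\cup L_i^*\cup L_i'')$ into its bounded end-corrections and its fully periodic middle block $L_i^*=(\mathcal D_i+d_i\Z)\cap[0,\max L_i^*]$; in the interior the sumset $L_1^*+L_2^*$ stabilises to a fully $\gcd(d_1,d_2)$-periodic set with pattern essentially $(\mathcal D_1+\mathcal D_2)\bmod\gcd(d_1,d_2)$, while the contributions of $L_i'$ and $L_i''$ perturb only a neighbourhood of bounded size around each endpoint of the sumset.

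The hardest part will be this sumset-of-AAMPs lemma: one must verify rigorously that the boundary behaviour does not propagate into the interior (so the periodic middle really forms after a uniformly bounded transient, independent of the sizes of $L_1$ and $L_2$) and that the resulting end-corrections at the two ends of $L_1+L_2$ remain of size bounded solely in terms of $M$ and $\Delta^{**}$. Once this is established, the parameters $M'$ and $\Delta^*$ are uniform over $a\in H$ and yield STSL for $H_1\times H_2$; the inductive step to general $n$ then follows by applying the $n=2$ case to $(H_1\times\cdots\times H_{n-1})\times H_n$.
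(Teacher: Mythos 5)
Your proposal is correct and takes essentially the same route as the paper: both reduce the problem to the sumset identity $\mathcal L(H)=\{L_1+\cdots+L_n\mid L_i\in\mathcal L(H_i)\}$ together with the fact that the $H_i$ are divisor-closed submonoids of $H$, and then invoke that a sumset of AAMPs with uniform bound and finite difference set is again an AAMP with uniformly controlled parameters. Where you sketch the proof of this sumset lemma and give an explicit successor-bumping argument for the finiteness of $\Delta(H)$, the paper simply cites \cite[Proposition~1.4.5 and Theorem~4.2.16]{Ge-HK06a} for precisely these two facts.
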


\begin{proof}
Without restriction we may suppose that $H$ is reduced. Then $H_1, \ldots, H_n$ are reduced and they are divisor-closed submonoids of $H$. Thus, if $\Delta (H)$ is finite (or if the Structure Theorem for Sets of Lengths holds for $H$, then the same is true for each divisor-closed submonoid.

If $\Delta (H_1), \ldots, \Delta (H_n)$ are finite, then $\Delta (H)$ is finite (a proof in the cancellative setting can be found in \cite[Proposition 1.4.5]{Ge-HK06a}, and the proof in the general setting runs along the same lines). Clearly, we have
\[
\mathcal L (H) = \{L_1+ \ldots + L_n \mid L_i \in \mathcal L (H_i) \ \text{for all} \ i \in [1,n] \}
\]
whence sets of lengths in $H$ are sumsets of sets of lengths in $H_1, \ldots, H_n$. Thus if the Structure Theorem for Sets of Lengths holds for $H_1, \ldots, H_n$, then it holds for $H$ by \cite[Theorem 4.2.16]{Ge-HK06a}.
\end{proof}

\medskip
\begin{theorem} \label{3.8}
Let $H = H_1 \times \ldots \times H_n$ where $n \in \N$ and $H_1, \ldots, H_n$ are atomic monoids. Then the following statements are equivalent{\rm \,:}
\begin{enumerate}
\item[(a)] $H$ satisfies the Structure Theorem for Sets of Lengths, $\mathsf c_{\eq} (H) < \infty$, and $\delta_w (H) < \infty$.

\smallskip
\item[(b)] For every $i \in [1,n]$, $H_i$ satisfies the Structure Theorem for Sets of Lengths, $\mathsf c_{\eq} (H_i) < \infty$, and $\delta_w (H_i) < \infty$.
\end{enumerate}
\end{theorem}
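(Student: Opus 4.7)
\noindent\emph{Proof plan.} The direction (a) $\Rightarrow$ (b) is immediate: for each $i\in [1,n]$, the embedding $H_i\hookrightarrow H$, $a_i\mapsto (1,\ldots,a_i,\ldots,1)$, realizes $H_i$ as a divisor-closed submonoid of $H$, so the sets of factorizations of any element of $H_i$ computed in the two monoids coincide. Hence sets of lengths, distances between factorizations, and (monotone) chains transfer, yielding the Structure Theorem for Sets of Lengths, the finiteness of $\mathsf{c}_{\eq}$, and the finiteness of $\delta_w$ for each $H_i$.

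For (b) $\Rightarrow$ (a), the Structure Theorem for Sets of Lengths of $H$ is given by Lemma \ref{3.7}(2), and $\Delta(H)$ is finite by Lemma \ref{3.7}(1). Set $E=\max_i\delta_w(H_i)$, $F=\max_i\mathsf{c}_{\eq}(H_i)$, and $D=\max_i\max\Delta(H_i)$; all three are finite. The key structural fact is that atoms of $H$ lie in a single factor, so every $z\in\mathsf{Z}_H(a)$ of $a=(a_1,\ldots,a_n)$ decomposes uniquely as $z=z_1\cdots z_n$ with $z_i\in\mathsf{Z}_{H_i}(a_i)$, giving $|z|=\sum_i|z_i|$ and
\[
\mathsf{d}_H(z,z')\le\sum_{i=1}^n\mathsf{d}_{H_i}(z_i,z_i'),
\]
with equality whenever $|z_i|=|z_i'|$ for every $i$.

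To prove $\delta_w(H)<\infty$, for given $a\in H$ and $k>\ell$ in $\mathsf{L}(a)$ I would exhibit length vectors $(k_i)_{i=1}^n$ and $(\ell_i)_{i=1}^n$, summing to $k$ and $\ell$ respectively, with $\sum_i|k_i-\ell_i|\le C_1(k-\ell)+C_0$ for universal constants $C_0,C_1$ depending only on the AAMP parameters of the $H_i$ and on $D$. Invoking $\delta_w(H_i)\le E$ in each component produces factorizations $z_i\in\mathsf{Z}_{H_i,k_i}(a_i)$ and $z_i'\in\mathsf{Z}_{H_i,\ell_i}(a_i)$ with $\mathsf{d}_{H_i}(z_i,z_i')\le E|k_i-\ell_i|$, and assembling yields $\mathsf{d}_H(\mathsf{Z}_k(a),\mathsf{Z}_\ell(a))\le E(C_1(k-\ell)+C_0)\le E(C_1+C_0)(k-\ell)$ (using $k-\ell\ge 1$), whence $\delta_w(a)\le E(C_1+C_0)$ uniformly in $a$. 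The main obstacle is the existence of such near-optimal length vectors: the AAMP structure of each $\mathsf{L}(a_i)$ (from the Structure Theorem) controls them up to bounded boundary corrections, while the finiteness of $\Delta(H_i)$ controls the ``exchange rate'' between components when one adjusts sums by small amounts.

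To prove $\mathsf{c}_{\eq}(H)<\infty$, given equal-length $z,z'\in\mathsf{Z}_H(a)$ I would build a monotone (constant-length) concatenating chain in two phases. In \emph{Phase~1}, I transform the length vector $(|z_i|)_{i=1}^n$ into $(|z_i'|)_{i=1}^n$ by a sequence of swaps, each replacing two coordinates $k_i,k_j$ by $k_i-d,k_j+d$ with $d$ bounded by $D$ and both new coordinates valid in $\mathsf{L}(a_i)$, $\mathsf{L}(a_j)$; lifting each swap to factorizations by altering only the two active components via $\delta_w$ gives step-distance at most $2ED$. In \emph{Phase~2}, the length vector now matches $(|z_i'|)_{i=1}^n$, and I concatenate componentwise using $\mathsf{c}_{\eq}(H_i)\le F$ in each factor, yielding step-distance at most $F$. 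Thus $\mathsf{c}_{\eq}(H)\le\max(2ED,F)$. The main obstacle parallels the previous paragraph: any two length vectors of equal total sum must be connected by a chain of bounded swaps inside the set of valid configurations, which again relies on the AAMP structure of the $\mathsf{L}(a_i)$.
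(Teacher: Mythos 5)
Your direction (a) $\Rightarrow$ (b) via divisor-closed submonoids is exactly the paper's argument, and the overall strategy for (b) $\Rightarrow$ (a) --- decomposing factorizations componentwise, invoking Lemma \ref{3.7} for the Structure Theorem, and exploiting the AAMP structure of the component length sets --- is also the strategy the paper uses (after reducing to $n=2$). However, the two places you flag as the ``main obstacles'' are precisely the technical heart of the theorem, and they are left unresolved in the proposal. For $\delta_w(H)<\infty$, the paper proves the needed estimate $|\ell_1-k_1|+|\ell_2-k_2|\le (4e+1)|\ell-k|$ for the minimizing decomposition by a genuine argument: if both $|\ell_i-k_i|>2e$ with opposite signs, one perturbs the four coordinates by $\pm e$ (where $e=\max\{M_1,M_2\}\prod_{d\in\Delta_1\cup\Delta_2}d$) to produce a strictly smaller decomposition, contradicting minimality; the AAMP bound $M_i$ and the fact that $e$ is a multiple of every relevant period guarantee the perturbed values stay in the $\mathsf L(a_i)$. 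That case analysis is not a routine consequence of ``AAMP structure controls boundary corrections,'' and asserting it without proof leaves a real gap.

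For $\mathsf c_{\eq}(H)<\infty$, your ``swap by $d$ bounded by $D=\max_i\max\Delta(H_i)$'' step is actually the wrong bound and would not go through as stated: you need to decrease $k_i$ and increase $k_j$ by the \emph{same} amount $d$ with both results valid, and a $d\le D$ that works as a distance in $\mathsf L(a_i)$ need not work as a distance in $\mathsf L(a_j)$. The paper sidesteps this by shifting always by $e$, a quantity that is a common multiple of all possible differences and large enough relative to the AAMP bounds; it then runs an induction on $r=|v_1|-|z_1|$, handling $r\le 2e$ by a single bounded application of $\delta_w$ in each factor and $r>2e$ by a shift of $e$ (which lies in both length sets precisely because of the AAMP inequalities $\min+M\le |z_i|\pm e\le\max-M$, verifiable when $r>2e$). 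So the plan's skeleton is sound and parallel to the paper, but the choice of step size and the verification that the steps land inside the $\mathsf L(a_i)$ are the essential content, and they are not supplied.
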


\begin{proof}
We may suppose that $H$ is reduced. Then $H_1, \ldots, H_n$ are reduced divisor-closed submonoids of $H$ and hence (a) implies (b). In order to show that (b) implies (a) it suffices to handle the case $n=2$.

Therefore, we suppose that $H_1$ and $H_2$ satisfy the Structure Theorem for Sets of Lengths and that $\mathsf c_{\eq} (H_1)$, $\mathsf c_{\eq} (H_2)$, $\delta_w (H_1)$, and $\delta_w (H_2)$ are all finite. Lemma \ref{3.7} implies that the Structure Theorem for Sets of Lengths holds for $H$. Now we proceed in two steps.

\smallskip
1. We show that $\delta_w (H_1 \times H_2) < \infty$. For every $i \in [1,2]$,
there is a finite nonempty set $\Delta_i$ and a bound $M_i \in\mathbb{N}$ such that for every $a_i \in H_i$, $\mathsf L(a_i)$ is an AAMP with bound $M_i$ and difference in $\Delta_i$ and for all $k, \ell\in\mathsf L(a_i)$, we have $\mathsf d(\mathsf Z_k(a_i),\mathsf Z_{\ell}(a_i))\leq {\delta_w (H_i)}|\ell - k|$.

We set $e=\max\{M_1,M_2\}\prod_{d\in\Delta_1\cup\Delta_2} d$, and assert that
\[
\delta_w (H_1 \times H_2) \le \max\{{\delta_w (H_1)},{\delta_w (H_2)}\}(4e+1) \,.
\]
Let $a\in H$ and $k, \ell \in\mathsf L(a)$ be distinct. Then there are some $a_1 \in H_1$ and $a_2 \in H_2$ such that $a= a_1a_2$. Obviously, there are some $k_1,\ell_1\in\mathsf L (a_1)$ and $k_2,\ell_2\in\mathsf L (a_2)$ such that $k=k_1+k_2$, $\ell =\ell_1+\ell_2$ and for all $v_1,w_1 \in\mathsf L (a_1)$ and $v_2,w_2\in\mathsf L (a_2)$ with $k=v_1+v_2$ and $\ell =w_1+w_2$ it follows that $|\ell_1 - k_1|+|\ell_2-k_2|\leq |v_1-w_1|+|v_2-w_2|$.
We proceed by proving the following assertion.

\begin{enumerate}
\item[{\bf A.}\,] $|\ell_1 - k_1|+|\ell_2 - k_2|\leq (4e+1)|\ell - k|$.
\end{enumerate}

{\it Proof of} \ {\bf A.} We distinguish two cases.

\smallskip
\noindent
CASE 1: \, $|\ell_1 - k_1|\leq 2e$ or $|\ell_2 - k_2|\leq 2e$.

Without restriction we may suppose that $|\ell_1 - k_1|\leq 2e$.
Since $k_2-\ell_2=k- \ell-(k_1-\ell_1)$ we infer that $|\ell_2 - k_2|\leq |\ell - k|+|\ell_1 - k_1|\leq |\ell - k|+2e\leq (2e+1)|\ell - k|$. It follows that $|\ell_1 - k_1|+|\ell_2 - k_2|\leq 2e+(2e+1)|\ell - k|\leq(4e+1)|\ell - k|$.

\smallskip
\noindent
CASE 2: \, $|\ell_1 - k_1|>2e$ and $|\ell_2 - k_2|>2e$.

First, suppose that $k_1\leq \ell_1$ and $k_2\geq \ell_2$. We set
\[
v_1=k_1+e, \ w_1=\ell_1-e, \ v_2=k_2-e, \quad \text{ and} \quad w_2=\ell_2+e \,,
\]
and observe that
\[
\min\mathsf L (a_1)+M_1\leq v_1\leq w_1\leq\max\mathsf L (a_1)-M_1 \quad \text{and} \quad \min\mathsf L (a_2)+M_2\leq w_2\leq v_2\leq\max\mathsf L (a_2)-M_2 \,.
\]
Since $e$ is a multiple of the differences of the AAMPs $\mathsf L (a_1)$ and $\mathsf L (a_2)$, it follows that $v_1,w_1\in\mathsf L (a_1)$ and $v_2,w_2\in\mathsf L (a_2)$. Therefore, since $k=v_1+v_2$ and $\ell =w_1+w_2$, our minimality choice of $k_1,k_2,\ell_1$, and $\ell_2$ implies that
\[
\begin{aligned}
|\ell_1 - k_1|+|\ell_2 - k_2| & \leq |v_1-w_1|+|v_2-w_2|=|k_1-\ell_1+2e|+|k_2-\ell_2-2e| \\
 & =\ell_1-k_1-2e+k_2-\ell_2-2e=|\ell_1 - k_1|+|\ell_2 - k_2|-4e \,,
\end{aligned}
\]
 a contradiction.

Second, if $k_1\geq \ell_1$ and $k_2\leq \ell_2$, then we again obtain a contradiction. Thus we infer that ($k_1\leq \ell_1$ and $k_2\leq \ell_2$) or ($k_1\geq \ell_1$ and $k_2\geq \ell_2$). In both cases it is obvious that
\[
|\ell_1 - k_1|+|\ell_2 - k_2|=|\ell - k|\leq (4e+1)|\ell - k| \,. \qed ({\bf A})
\]

\smallskip
Clearly, there are some $x_1\in\mathsf Z_{k_1}(a_1)$, $y_1\in\mathsf Z_{\ell_1}(a_1)$, $x_2\in\mathsf Z_{k_2}(a_2)$, and $y_2\in\mathsf Z_{\ell_2}(a_2)$ such that $\mathsf d(x_1,y_1)=\mathsf d(\mathsf Z_{k_1}(a_1),\mathsf Z_{\ell_1}(a_1))$ and $\mathsf d(x_2,y_2)=\mathsf d(\mathsf Z_{k_2}(a_2),\mathsf Z_{\ell_2}(a_2))$. Therefore, {\bf A} implies that
\[
\begin{aligned}
\mathsf d(\mathsf Z_k(a),\mathsf Z_{\ell}(a)) & \leq \mathsf d(x_1x_2,y_1y_2)\leq\mathsf d(x_1,y_1)+\mathsf d(x_2,y_2) \\
 & \leq {\delta_w (H_1)}|\ell_1 - k_1|+{\delta_w (H_2)}|\ell_2 - k_2| \\
 & \leq\max\{{\delta_w (H_1)},{\delta_w (H_2)}\}(|\ell_1 - k_1|+|\ell_2 - k_2|) \\ & \leq \max\{{\delta_w (H_1)},{\delta_w (H_2)}\}(4e+1) |\ell - k| \,.
\end{aligned}
\]

\bigskip
2. We assert that $\mathsf c_{\eq} (H_1 \times H_2) \le N$, where
\[
N=\max\{2e({\delta_w (H_1)}+{\delta_w (H_2)}),\mathsf c_{\eq}(H_1),\mathsf c_{\eq}(H_2)\} \,.
\]
To prove that $\mathsf c_{\eq}(H_1\times H_2)\leq N$ it is sufficient to show that for all $a_1 \in H_1$, $a_2 \in H_2$, $r\in\mathbb{N}_0$, $z_1 ,v_1 \in\mathsf Z(a_1)$, $z_2,v_2\in\mathsf Z(a_2)$ such that $|v_1|-|z_1|=r$ and $|z_1|+|z_2|=|v_1|+|v_2|$ it follows that $z_1z_2$ and $v_1v_2$ can be concatenated by a monotone $N$-chain. Let $a_1 \in H_1$ and $a_2 \in H_2$. We prove the assertion by induction on $r$. Let $r\in\mathbb{N}_0$ be such that for every $s\in\mathbb{N}_0$ with $s<r$ and all $z_1,v_1\in\mathsf Z(a_1)$, $z_2,v_2\in\mathsf Z(a_2)$ such that $|v_1|-|z_1|=s$ and $|z_1|+|z_2|=|v_1|+|v_2|$ it follows that $z_1z_2$ and $v_1v_2$ can be concatenated by a monotone $N$-chain. Let $z_1,v_1 \in\mathsf Z(a_1)$, $z_2,v_2\in\mathsf Z(a_2)$ be such that $|v_1|-|z_1|=r$ and $|z_1|+|z_2|=|v_1|+|v_2|$. Obviously, $|z_2|-|v_2|=|v_1|-|z_1|$. We distinguish two cases.

\smallskip
\noindent
CASE 1: \, $|z_2|-|v_2|\leq 2e$.

Observe that $\mathsf d(\mathsf Z_{|z_1|}(a_1),\mathsf Z_{|v_1|}(a_1))\leq 2e{\delta_w (H_1)}$ and $\mathsf d(\mathsf Z_{|z_2|}(a_2),\mathsf Z_{|v_2|}(a_2))\leq 2e{\delta_w (H_2)}$. Note that there are some $x_1\in\mathsf Z_{|z_1|}(a_1)$, $y_1\in\mathsf Z_{|v_1|}(a_1)$, $x_2\in\mathsf Z_{|z_2|}(a_2)$ and $y_2\in\mathsf Z_{|v_2|}(a_2)$ such that $\mathsf d(x_1,y_1)=\mathsf d(\mathsf Z_{|z_1|}(a_1),\mathsf Z_{|v_1|}(a_1))$ and $\mathsf d(x_2,y_2)=\mathsf d(\mathsf Z_{|z_2|}(a_2),\mathsf Z_{|v_2|}(a_2))$. There is some monotone $\mathsf c_{\eq}(H_1)$-chain which concatenates $z_1$ and $x_1$, and thus there is some monotone $\mathsf c_{\eq}(H_1)$-chain which concatenates $z_1z_2$ and $x_1z_2$. Moreover, there is some monotone $\mathsf c_{\eq}(H_2)$-chain which concatenates $z_2$ and $x_2$, and thus there is some monotone $\mathsf c_{\eq}(H_2)$-chain which concatenates $x_1z_2$ and $x_1x_2$. Therefore, there is some monotone $N$-chain which concatenates $z_1z_2$ and $x_1x_2$. Along the same lines one can show that there is some monotone $N$-chain which concatenates $y_1y_2$ and $v_1v_2$. Observe that $\mathsf d(x_1x_2,y_1y_2)\leq\mathsf d(x_1,y_1)+\mathsf d(x_2,y_2)\leq 2e({\delta_w (H_1)}+{\delta_w (H_2)})\leq N$. Therefore, there is some monotone $N$-chain which concatenates $z_1z_2$ and $v_1v_2$.

\smallskip
\noindent
CASE 2: \, $|z_2|-|v_2|> 2e$.

Note that $|v_1|-|z_1|>2e$, $\min\mathsf L (a_1)+M_1\leq |z_1|+e\leq\max\mathsf L (a_1)-M_1$ and $\min\mathsf L (a_2)+M_2\leq |z_2|-e\leq\max\mathsf L (a_2)-M_2$. Since $e$ is a multiple of the differences of the AAMPs $\mathsf L (a_1)$ and $\mathsf L (a_2)$ we have $|z_1|+e\in\mathsf L (a_1)$ and $|z_2|-e\in\mathsf L (a_2)$. There are some $w_1\in\mathsf Z(a_1)$ and $w_2\in\mathsf Z(a_2)$ such that $|w_1|=|z_1|+e$ and $|w_2|=|z_2|-e$. It follows that $0\leq |v_1|-|w_1|=r-e<r$ and $|w_1|+|w_2|=|z_1|+|z_2|=|v_1|+|v_2|$. By the induction hypothesis, $w_1w_2$ and $v_1v_2$ can be concatenated by a monotone $N$-chain. Note that there are some $x_1\in\mathsf Z_{|z_1|}(a_1)$, $y_1\in\mathsf Z_{|w_1|}(a_1)$, $x_2\in\mathsf Z_{|z_2|}(a_2)$, $y_2\in\mathsf Z_{|w_2|}(a_2)$ such that $\mathsf d(x_1,y_1)=\mathsf d(\mathsf Z_{|z_1|}(a_1),\mathsf Z_{|w_1|}(a_1))$ and $\mathsf d(x_2,y_2)=\mathsf d(\mathsf Z_{|z_2|}(a_2),\mathsf Z_{|w_2|}(a_2))$. There is some monotone $\mathsf c_{\eq}(H_1)$-chain which concatenates $z_1$ and $x_1$, and thus there is some monotone $\mathsf c_{\eq}(H_1)$-chain which concatenates $z_1z_2$ and $x_1z_2$. Moreover, there is some monotone $\mathsf c_{\eq}(H_2)$-chain which concatenates $z_2$ and $x_2$, and thus there is some monotone $\mathsf c_{\eq}(H_2)$-chain which concatenates $x_1z_2$ and $x_1x_2$. Therefore, there is some monotone $N$-chain which concatenates $z_1z_2$ and $x_1x_2$. Along the same lines one can show that there is some monotone $N$-chain which concatenates $y_1y_2$ and $w_1w_2$. Observe that $\mathsf d(x_1x_2,y_1y_2)\leq\mathsf d(x_1,y_1)+\mathsf d(x_2,y_2)\leq e({\delta_w (H_1)}+{\delta_w (H_2)})\leq N$. Consequently, there is some monotone $N$-chain which concatenates $z_1z_2$ and $w_1w_2$, and thus there is some monotone $N$-chain which concatenates $z_1z_2$ and $v_1v_2$.
\end{proof}

\medskip
\section{Finitely generated monoids of $r$-ideals} \label{4}
\medskip

Our notation of ideal theory follows \cite{HK98} with the modifications stemming from the fact that the monoids in this paper do not contain a zero-element. We recall some basics in the setting of commutative monoids and will use all notations also for integral domains.

Let $H$ be a cancellative monoid. An {\it ideal system} on $H$ is a map $r\colon\mathcal P(H)\to\mathcal P(H)$ such that the following conditions are satisfied for all subsets $X,Y\subset H$ and all $c\in H$:
\begin{itemize}
\item $X\subset X_r$.

\item $X\subset Y_r$ implies $X_r\subset Y_r$.

\item $cH\subset\{c\}_r$.

\item $cX_r=(cX)_r$.
\end{itemize}
Let $r$ be an ideal system on $H$. A subset $I \subset H$ is called an $r$-ideal if $I_r=I$. We denote by $\mathcal I_r (H)$ the set of all nonempty $r$-ideals, and we define $r$-multiplication by setting $I \cdot_r J = (IJ)_r$ for all $I,J \in \mathcal I_r (H)$. Then $\mathcal I_r (H)$ together with $r$-multiplication is a reduced semigroup with identity element $H$. Let $\mathcal F_r (H)$ denote the semigroup of fractional $r$-ideals, $\mathcal F_r (H)^{\times}$ the group of $r$-invertible fractional $r$-ideals, and $\mathcal I_r^*(H) = \mathcal F_r^{\times} (H) \cap \mathcal I_r (H)$ the cancellative monoid of $r$-invertible $r$-ideals of $H$ with $r$-multiplication. We say that $H$ satisfies the $r$-{\it Krull Intersection Theorem} if
\[
\bigcap_{n\in\mathbb{N}_0} (I^n)_{r} = \emptyset \quad \text{ for all} \quad I\in\mathcal{I}_{r}(H) \setminus \{H\} \,.
\]
For subsets $A, B \subset \mathsf q(H)$, we denote by $(A\DP B)=\{x\in\mathsf q(H)\mid xB\subset A\}$, by $A^{-1}=(H \DP A)$, by $A_v=(A^{-1})^{-1}$ and by $A_t=\bigcup_{E\subset A,|E|<\infty} E_v$.
We will use the $s$-system, the $v$-system, and the $t$-system. By $\mathfrak X (H)$, we denote the set of minimal nonempty prime $s$-ideals of $H$.
If $r$ and $q$ are ideal systems on $H$, then we say that $r$ is finer than $q$ (resp. $q$ is coarser than $r$) if $\mathcal I_q(H)\subset\mathcal I_r(H)$ (equivalently, $X_r\subset X_q$ for all $X\subset H$). Recall that $X_r\subset X_v$, and if $r$ is finitary, then $X_r\subset X_t$ for all $X \subset H$.

We need the following classes of monoids (see \cite[Section 2.10]{Ge-HK06a}, also for the correspondence to ring theory). Let $H$ be a monoid and $\mathfrak m=H\setminus H^{\times}$. Then $H$ is said to be
\begin{itemize}
\item {\it archimedean} if $\cap_{n\ge 0} a^nH = \emptyset$ for every $a\in\mathfrak m$,

\item {\it primary} if $H\ne H^{\times}$ and $s$-$\spec(H)=\{\emptyset,\mathfrak m\}$,

\item {\it strongly primary} if for every $a\in\mathfrak m$ there is an $n\in\N$ such that $\mathfrak m^n\subset aH$,

\item a {\it $\G$-monoid} if the intersection of all nonempty prime $s$-ideals is nonempty.
\end{itemize}
To recall some well-known properties, note that every monoid satisfying the ACCP is archimedean, that every $v$-Noetherian primary monoid is strongly primary, that every primary monoid is a G-monoid, and that overmonoids of G-monoids are G-monoids.

In order to give the definition of C-monoids we need to recall the concept of class semigroups which are a refinement of ordinary class groups in commutative algebra (a detailed presentation can be found in \cite[Chapter 2]{Ge-HK06a}).
Let $F$ be a cancellative monoid and $H \subset F$ a submonoid. Two elements $y, y' \in F$
are called $H$-equivalent, if $y^{-1}H \cap F = {y'}^{-1} H \cap
F$. $H$-equivalence is a congruence relation on $F$. For $y \in
F$, let $[y]_H^F$ denote the congruence class of $y$, and let
\[
\mathcal C(H,F)=\{[y]_H^F\mid y\in F\}\quad\text{and}
\quad\mathcal C^*(H,F)=\{ [y]_H^F \mid y \in (F\setminus
F^{\times})\cup\{1\}\}.
\]
Then $\mathcal C (H,F)$ is a semigroup with unit element $[1]_H^F$
(called the {\it class semigroup} of $H$ in $F$) and $\mathcal C^*(H,F)
\subset \mathcal C (H,F)$ is a subsemigroup (called the reduced
class semigroup of $H$ in $F$). If $H$ is a submonoid of a factorial monoid $F$
      such that $H \cap F^\times = H^\times$ and $\mathcal{C}^*(H,F)$ is finite,
      then $H$ is called a {\it {\rm C}-monoid}.
      A C-monoid $H$ is $v$-Noetherian with $(H \colon \widehat H) \ne \emptyset$ and its complete integral closure $\widehat H$ is Krull with finite class group (\cite[Theorems 2.9.11 and 2.9.13]{Ge-HK06a}).
      If $H$ is a Krull monoid with finite class group, then $H$ is a C-monoid and the class semigroup coincides with the usual class group of a Krull monoid.

Let $R$ be a domain with quotient field $K$ and $r$ an ideal system on $R$ (clearly, $R^{\bullet}$ is a monoid and $r$ restricts to an ideal system $r'$ on $R^{\bullet}$ whence for every subset $I \subset R$ we have $I_r = (I^{\bullet})_{r'} \cup \{0\}$). We denote by $\mathcal I_r(R)$ the semigroup of nonzero $r$-ideals of $R$ and $\mathcal I_r^*(R) \subset \mathcal I_r(R)$ is the subsemigroup of $r$-invertible $r$-ideals of $R$. The usual ring ideals form an ideal system, called the $d$-system, and for these ideals we omit all suffices (i.e., $\mathcal I(R) = \mathcal I_d(R)$, and so on).

\smallskip
\begin{center} {\it Throughout the rest of the paper, every ideal system $r$ on a domain $R$ \\ has the property that $\mathcal I_r(R) \subset \mathcal I(R)$.}
\end{center}
\smallskip

A domain $R$ is said to be archimedean (primary, strongly primary, a G-domain, a C-domain) if its multiplicative monoid $R^{\bullet}$ has the respective property. Recall that a domain is primary if and only if it is one-dimensional local. To give an example for a C-domain, let $R$ be a Mori domain with nonzero conductor $\mathfrak f =(R\colon\widehat R)$. If the class group $\mathcal C(\widehat R)$ and the factor ring $\widehat R/\mathfrak f$ are both finite, then $R$ is a C-domain by \cite[Theorem 2.11.9]{Ge-HK06a} (for more on C-domains see \cite{Ge-Ra-Re15c,Re13a}).

\medskip
In the first part of this section we establish sufficient criteria for the $r$-ideal semigroup $\mathcal{I}_{r}(R)$ to be unit-cancellative.

\begin{lemma}\label{4.1}
Let $R$ be a domain and $r$ an ideal system on $R$.
\begin{enumerate}
\item If $R$ satisfies the $r$-Krull Intersection Theorem, then $\mathcal{I}_r(R)$ is unit-cancellative.

\smallskip
\item $\mathcal{I}_r(R)$ is unit-cancellative if and only if $\mathcal{I}_r(R)$ has no nontrivial idempotents and for every $I\in\mathcal{I}_r(R)$, $\{J\in\mathcal{I}_r(R)\mid (IJ)_r=I\}$ has a minimal element with respect to inclusion.
\end{enumerate}
\end{lemma}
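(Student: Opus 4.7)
The plan is to reduce both parts to the statement that $(IJ)_r = I$ forces $J = R$, since the monoid $\mathcal{I}_r(R)$ is reduced with identity element $R$, so unit-cancellativity is precisely this implication.

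For part (1), the natural approach is iteration. Starting from $(IJ)_r = I$, the general identity $((AB)_r C)_r = (ABC)_r$ for ideal systems yields $(IJ^n)_r = I$ for every $n \in \mathbb{N}_0$ by a short induction. Because $I \subset R$, the inclusion $IJ^n \subset J^n$ is immediate, so taking $r$-closures gives $I = (IJ^n)_r \subset (J^n)_r$ for all $n$, hence $I \subset \bigcap_{n \in \mathbb{N}_0} (J^n)_r$. If $J \neq R$, the $r$-Krull Intersection Theorem forces the right-hand side to be empty, contradicting the fact that $I$ is nonempty. So $J = R$.

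For part (2), the forward direction is nearly tautological: unit-cancellativity says $\{J \in \mathcal{I}_r(R) \mid (IJ)_r = I\} = \{R\}$ for every $I$, which trivially has a minimum and excludes nontrivial idempotents. The content lies in the converse. Fix $I \in \mathcal{I}_r(R)$ and let $J_0$ be a minimal element of $S_I := \{J \in \mathcal{I}_r(R) \mid (IJ)_r = I\}$. A short calculation gives
\[
(I \cdot (J_0^2)_r)_r = ((IJ_0)_r J_0)_r = (IJ_0)_r = I,
\]
so $(J_0^2)_r \in S_I$, while also $(J_0^2)_r \subset J_0$. Minimality forces $(J_0^2)_r = J_0$, and the idempotent-free hypothesis then yields $J_0 = R$.

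The step to watch is passing from $J_0 = R$, for one particular minimal element, to the global conclusion that $S_I = \{R\}$. The point is that every element of $S_I$ is an $r$-ideal of $R$, hence contained in $R$; so once we know that $R$ itself is a minimal element of $S_I$, nothing can be strictly smaller, and $S_I$ collapses to the singleton $\{R\}$. This gives unit-cancellativity for the arbitrary $I$, completing the converse.
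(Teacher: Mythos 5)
Your proof is correct and follows the same strategy as the paper: for (1), iterate to get $(IJ^n)_r = I$ and appeal to the $r$-Krull Intersection Theorem; for (2), take a minimal element of $S_I$, show it is idempotent and hence equal to $R$, and then conclude $S_I = \{R\}$. Your final step is a small streamlining — you observe directly that every $J \in S_I$ is contained in $R$ and invoke minimality of $R$, whereas the paper reaches the same conclusion by computing $(JA)_r \subset A$ and applying minimality to that — but the underlying idea is identical.
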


\begin{proof}
1. Let $I,J\in\mathcal{I}_r(R)$ be such that $(IJ)_r=I$. We infer that $(IJ^n)_r=I$ for all $n\in\mathbb{N}_0$, hence $\{0\}\not=I\subset\bigcap_{n\in\mathbb{N}_0} (J^n)_r$. Since $R$ satisfies the $r$-Krull Intersection Theorem we have $J=R$.

2. First let $\mathcal{I}_r(R)$ be unit-cancellative. It is clear that $R$ is the only idempotent element of $\mathcal{I}_r(R)$ and for every $I\in\mathcal{I}_r(R)$, $\{J\in\mathcal{I}_r(R)\mid (IJ)_r=I\}=\{R\}$.

To prove the converse, suppose that $\mathcal{I}_r(R)$ has no nontrivial idempotents and let for every $I\in\mathcal{I}_r(R)$,\linebreak $\{B\in\mathcal{I}_r(R)\mid (IB)_r=I\}$ have a minimal element with respect to inclusion. Let $I,J\in\mathcal{I}_r(R)$ be such that $(IJ)_r=I$. We have to show that $J=R$. Let $A\in\mathcal{I}_r(R)$ be minimal such that $(IA)_r=I$. Since $(IA^2)_r=I$ and $(A^2)_r\subset A$, we infer that $A$ is an idempotent of $\mathcal{I}_r(R)$, and thus $A=R$. Note that $(IJA)_r=I$ and $(JA)_r\subset A$. Therefore, $J=(JA)_r=A=R$.
\end{proof}

\medskip
\begin{proposition}\label{4.2}
Let $R$ be a domain and $r$ an ideal system on $R$.
\begin{enumerate}
\item If $R$ is strongly primary, then $R$ satisfies the $r$-Krull Intersection Theorem.

\smallskip
\item Let $R$ be archimedean such that $\widehat{R}$ is a semilocal principal ideal domain. If $(R:\widehat{R})\not=\{0\}$ or $r$ is finitary, then $\mathcal{I}_r(R)$ is unit-cancellative.

\smallskip
\item If $R$ is an $r$-Noetherian $\G$-domain, then $\mathcal{I}_r(R)$ is unit-cancellative.
\end{enumerate}
\end{proposition}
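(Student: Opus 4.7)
My overall plan is to invoke Lemma~\ref{4.1} in each part. For (1) and (2), I verify the $r$-Krull Intersection Theorem so that Lemma~\ref{4.1}(1) applies; for (3), either the same or Lemma~\ref{4.1}(2). Throughout I use that principal ideals are $r$-closed: from $\{1\}_r = R$ and the axiom $cX_r = (cX)_r$ applied with $X = \{1\}$ one obtains $\{c\}_r = cR$, so $(cR)_r = cR$ for every $c \in R$.

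\textit{Part (1).} Let $I \in \mathcal{I}_r(R)\setminus\{R\}$. Since $R$ is strongly primary, it is local with maximal ideal $\mathfrak{m}$, so $I \subset \mathfrak{m}$, and for any $0 \ne a \in I$ there is $n \in \N$ with $\mathfrak{m}^n \subset aR$. Then $(I^{kn})_r \subset (\mathfrak{m}^{kn})_r \subset (a^kR)_r = a^kR$ for every $k \ge 0$; strongly primary monoids are archimedean, so $\bigcap_k a^kR = \{0\}$. Hence $\bigcap_{m\ge 0} (I^m)_r = \{0\}$, and Lemma~\ref{4.1}(1) finishes the proof.

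\textit{Part (2).} Let $J \in \mathcal{I}_r(R)\setminus\{R\}$ and denote the maximal ideals of the semilocal PID $\widehat{R}$ by $\pi_1\widehat{R}, \ldots, \pi_s\widehat{R}$. Write $J\widehat{R} = a\widehat{R}$ with $a = \pi_1^{a_1}\cdots\pi_s^{a_s}$ up to a unit. Since $\widehat{R}$ is integral over $R$, every maximal ideal of $R$ has the form $P_i \cap R$; in particular $R$ is semilocal. If $J\widehat{R} = \widehat{R}$, then any presentation $1 = \sum j_i\widehat{r}_i$ with $j_i \in J$, $\widehat{r}_i \in \widehat{R}$ forces, via $1 \notin P_i$, that $J$ is contained in none of the $P_i \cap R$; since these are all the maximals of $R$, $J = R$. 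So $J \ne R$ forces some $a_i > 0$ and $\bigcap_n a^n\widehat{R} = \{0\}$ in the PID $\widehat{R}$. The technical core is the inclusion $(J^n)_r\widehat{R} \subset J^n\widehat{R} = a^n\widehat{R}$ for all $n \ge 1$, after which $\bigcap_n (J^n)_r \subset \bigcap_n a^n\widehat{R} = \{0\}$ yields the $r$-Krull Intersection Theorem. In the conductor case, with $0 \ne c \in (R\colon\widehat{R})$, a manipulation of $(R\colon J^n)_R$ versus $(R\colon J^n)_{\widehat{R}} = a^{-n}\widehat{R}$ shows that $cy \in a^n\widehat{R}$ for every $y \in (J^n)_r$; since $c$ is fixed while $v_i(a^n) \to \infty$ for any $i$ with $a_i > 0$, $y$ is divisible in $\widehat{R}$ by arbitrarily high $\pi_i$-powers and hence $y = 0$. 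In the finitary case, $(J^n)_r = \bigcup_E E_r$ over finite $E \subset J^n$; since $\widehat{R}$ is a PID, $E\widehat{R} = \gcd_{\widehat{R}}(E)\widehat{R} \subset a^n\widehat{R}$, and a direct analysis of $(R\colon E)_R$ and $E_r \subset E_v$ gives $E_r\widehat{R} \subset E\widehat{R} \subset a^n\widehat{R}$.

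\textit{Part (3).} Since principal ideals are $r$-ideals, $r$-Noetherianity implies ACCP, so $R$ is archimedean. Let $0 \ne g$ lie in every nonzero prime $s$-ideal of $R$; every nonzero ideal of $R$ then contains some $g^k$. I pursue $r$-Krull Intersection. Given $J \in \mathcal{I}_r(R)\setminus\{R\}$ and $0 \ne x \in \bigcap_n(J^n)_r$, the $G$-property applied to $xR$ yields $g^m \in xR$ for some $m$, so $g^m \in \bigcap_n(J^n)_r$. ACC on the ascending chain of fractional $r$-ideals $(R\colon J^n)$ forces stabilization $(R\colon J^N) = (R\colon J^{N+1})$, equivalently $(J^N)_v = (J^{N+1})_v$; the resulting $v$-idempotence of $(J^N)_v$ under multiplication by $J$, combined with $g^m \in (J^n)_r$ for all $n$ and archimedeanness ($\bigcap_k g^kR = \{0\}$), is meant to force $g^m = 0$, a contradiction. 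Lemma~\ref{4.1}(1) then concludes. Alternatively one may apply Lemma~\ref{4.1}(2) directly: $(I^2)_r = I$ propagates to $(I^n)_r = I$ for all $n$, and the same stabilization/$G$-element combination rules out nontrivial idempotents, while $S = \{J : (IJ)_r = I\}$ is closed under $\cdot_r$ and ACC on colon-type chains over $S$ extracts a minimal element.

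The main obstacle I anticipate is the inclusion $(J^n)_r\widehat{R} \subset J^n\widehat{R}$ in Part (2), which demands genuinely different calculations in the conductor and finitary regimes and is the technical heart of the argument. A secondary subtle point is the final archimedean-collapse step in Part (3), where one must convert the $v$-idempotence of $(J^N)_v$ together with $g^m \in \bigcap_n(J^n)_r$ into arbitrarily high $g$-divisibility of $g^m$ itself.
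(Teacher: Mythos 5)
Your Part~(1) is correct. It differs slightly in flavor from the paper's argument (which derives $(\mathfrak m^k)_r = xR$ and then $x^2R = xR$ to force $x\in R^\times$), whereas you appeal to archimedeanness of strongly primary monoids as a black box; both close the loop cleanly, and your observation that $\bigcap_k a^k R = \{0\}$ suffices is a legitimate shortcut.

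Parts~(2) and~(3), however, have genuine gaps.

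For Part~(2), the paper does \emph{not} go through the $r$-Krull Intersection Theorem. Instead it first proves $(\widehat R \colon A) = (\widehat R \colon A_r)$ for every ideal $A$ of $R$ (this is the assertion you call the ``technical core,'' and in the conductor case it follows in one line from $\widehat R$ being a fractional $v$-ideal, in the finitary case from $(\widehat R)_t = \widehat R$), and then shows \emph{directly} that $(IJ)_r = I$ forces $J\widehat R = \widehat R$, whence $J \not\subset \mathfrak m$ for every $\mathfrak m \in \max(\widehat R)$, and by prime avoidance inside the semilocal ring $\widehat R$ together with $\widehat R^\times \cap R = R^\times$ (archimedeanness) one gets $J \cap R^\times \neq \emptyset$, i.e.\ $J = R$. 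Your route would establish Krull intersection and then invoke Lemma~\ref{4.1}(1); this can in principle be made to work, but your justification of the key step ``$J \neq R$ implies $J\widehat R \neq \widehat R$'' is wrong: you invoke integrality of $\widehat R$ over $R$ to conclude every maximal ideal of $R$ is of the form $P_i \cap R$ and that $R$ is semilocal, but $\widehat R$ is the \emph{complete} integral closure, which need not be integral over $R$ (and $R$ need not be semilocal). The correct argument here is precisely the paper's prime-avoidance-in-$\widehat R$-plus-archimedean step. Moreover, the inclusion $(J^n)_r\widehat R \subset J^n \widehat R$ is never actually proved in your sketch; the manipulations you describe (``a manipulation of $(R\colon J^n)_R$ versus $a^{-n}\widehat R$ shows\dots'', ``a direct analysis of $(R\colon E)_R$ and $E_r \subset E_v$ gives\dots'') are placeholders, not arguments.

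For Part~(3), the paper's proof is entirely different: it sets $S = (I\colon I)$, invokes \cite[Theorem 4.1(b)]{Re12a} to get that $S$ is Mori and $JS = S$, observes that $S$ (as an overring of the $\G$-domain $R$) is a $\G$-domain hence has finite spectrum, and then prime-avoids inside $\max(S)$ together with $S^\times \cap R = R^\times$ to get $J = R$. Your proposed argument has several unresolved issues: the ascending chain $(R\colon J^n)$ consists of \emph{fractional} $v$-ideals, and ACC on fractional ideals is not granted by $r$-Noetherianity (the standard Mori ACC is for integral divisorial ideals, and there is no obvious uniform element $c$ with $c(R\colon J^n)\subset R$ for all $n$ unless $\bigcap_n(J^n)_v \neq 0$, which is exactly what you need to refute). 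More importantly, your concluding step is explicitly labeled ``is meant to force $g^m = 0$''; the combination of $v$-idempotence of $(J^N)_v$ with $g^m \in \bigcap_n(J^n)_r$ does not in an obvious way produce arbitrarily high $g$-divisibility of $g^m$, and you do not explain why. The alternative route via Lemma~\ref{4.1}(2) is likewise only gestured at. So Parts~(2) and~(3) as written are incomplete, and the paper's approach — direct unit-cancellativity via conductor/overring computations and prime avoidance — is substantially different.
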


\begin{proof}
1. Set $\mathfrak m=R\setminus R^{\times}$. Let $I\in\mathcal{I}_r(R)$ be such that $\bigcap_{n\in\mathbb{N}_0} (I^n)_r\not=\{0\}$. There is some nonzero $x\in\bigcap_{n\in\mathbb{N}_0} (I^n)_r$. We infer that $\mathfrak m^k\subset xR$ for some $k\in\mathbb{N}_0$, and thus $(\mathfrak m^k)_r\subset xR$. Assume that $I\not=R$. Then $I\subset\mathfrak m$, hence $(\mathfrak m^k)_r\subset xR\subset\bigcap_{n\in\mathbb{N}_0} (I^n)_r\subset\bigcap_{n\in\mathbb{N}_0} (\mathfrak m^n)_r\subset (\mathfrak m^k)_r$. This implies that $(\mathfrak m^k)_r=xR$. It follows that $(\mathfrak m^{2k})_r=(\mathfrak m^k)_r$, hence $x^2R=xR$. Consequently, $x\in I\cap R^{\times}$, and thus $I=R$, a contradiction.

\smallskip
2. Let $(R:\widehat{R})\not=\{0\}$ or let $r$ be finitary. We start with the following assertion.

\begin{enumerate}
\item[{\bf A.}] $(\widehat{R}:A)=(\widehat{R}:A_r)$ for every ideal $A$ of $R$.
\end{enumerate}

{\it Proof of} \ {\bf A.} Let $A$ be an ideal of $R$. It is sufficient to show that $(\widehat{R}:A)\subset (\widehat{R}:A_r)$, since the other inclusion is trivially satisfied. Let $x\in (\widehat{R}:A)$. Then $xA\subset\widehat{R}$. Recall that the $v$ system is the coarsest ideal system on $R$ and the $t$ system is the coarsest finitary ideal system on $R$.

\smallskip
\noindent
CASE 1: \, $(R:\widehat{R})\not=\{0\}$.

We have $\widehat{R}$ is a fractional divisorial ideal of $R$, and thus $xA_r \subset x A_v = (xA)_v \subset (\widehat{R})_v=\widehat{R}$.

\smallskip
\noindent
CASE 2: \, $r$ is finitary.

It follows that $xA_r \subset x A_t = (xA)_t\subset (\widehat{R})_t=\widehat{R}$.\qed ({\bf A})

\bigskip
Note that $A_{v_{\widehat{R}}}=A\widehat{R}$ for every ideal $A$ of $R$, since $\widehat{R}$ is a principal ideal domain. Now let $I,J\in\mathcal{I}_r(R)$ be such that $(IJ)_r=I$. We infer by {\bf A} that $I\widehat{R}J\widehat{R}=(IJ)_{v_{\widehat{R}}}=((IJ)_r)_{v_{\widehat{R}}}=I_{v_{\widehat{R}}}=I\widehat{R}$. Consequently, $J\widehat{R}=\widehat{R}$, since $\widehat{R}$ is a principal ideal domain. Note that $J$ is additively and multiplicatively closed and $J\not\subset\mathfrak m$ for every $\mathfrak m\in\max(\widehat{R})$. Therefore, $J\not\subset\bigcup_{\mathfrak m\in\max(\widehat{R})} \mathfrak m=\widehat{R}\setminus\widehat{R}^{\times}$ by prime avoidance, since $\widehat{R}$ is semilocal. Since $R$ is archimedean, we have $\emptyset\not=J\cap\widehat{R}^{\times}=J\cap R\cap\widehat{R}^{\times}=J\cap R^{\times}$, and thus $J=R$.

\smallskip
3. Let $I,J\in\mathcal{I}_r(R)$ be such that $(IJ)_r=I$. Set $S=(I:I)$. It follows by \cite[Theorem 4.1(b)]{Re12a} and its proof that $S$ is a Mori domain and $JS=S$. Observe that $S$ is also a $\G$-domain, and thus ${\rm spec}(S)$ is finite. Since $J$ is an ideal of $R$ and $J\not\subset\mathfrak m$ for all $\mathfrak m\in\max(S)$ we have $J\not\subset\bigcup_{\mathfrak m\in\max(S)} \mathfrak m=S\setminus S^{\times}$ by prime avoidance. Therefore, $J\cap S^{\times}\not=\emptyset$. Clearly, $R$ is archimedean (since $R$ is a Mori domain). Since $S\subset\widehat{R}$, we have $R^{\times}\subset S^{\times}\cap R\subset\widehat{R}^{\times}\cap R=R^{\times}$, hence $S^{\times}\cap R=R^{\times}$. We infer that $\emptyset\not=J\cap S^{\times}=J\cap R\cap S^{\times}=J\cap R^{\times}$, and thus $J=R$.
\end{proof}

\begin{lemma}\label{4.3}
Let $R$ be a domain and $r$ a finitary ideal system on $R$.
\begin{enumerate}
\item If $R_\mathfrak m$ satisfies the $r_\mathfrak m$-Krull Intersection Theorem for every $\mathfrak m\in r$-$\max(R)$, then $R$ satisfies the $r$-Krull Intersection Theorem.

\smallskip
\item If $\mathcal{I}_{r_\mathfrak m}(R_\mathfrak m)$ is unit-cancellative for every $\mathfrak m\in r$-$\max(R)$, then $\mathcal{I}_r(R)$ is unit-cancellative.
\end{enumerate}
\end{lemma}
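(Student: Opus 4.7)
The plan is to reduce both claims to two standard facts about a finitary ideal system $r$ and its localizations: first, the compatibility $(X_r)R_\mathfrak m=(XR_\mathfrak m)_{r_\mathfrak m}$ for every subset $X\subset R$ and every $r$-prime $\mathfrak m$, which in particular yields $((I^n)_r)R_\mathfrak m=((IR_\mathfrak m)^n)_{r_\mathfrak m}$ for all $I\in\mathcal I_r(R)$ and $n\in\mathbb N_0$; and second, that every proper $r$-ideal is contained in some $r$-maximal ideal. Under the running convention $\mathcal I_r(R)\subset\mathcal I(R)$, an $r$-maximal $\mathfrak m$ is a proper ordinary ideal of $R$, so $R_\mathfrak m$ is local with maximal ideal $\mathfrak m R_\mathfrak m=(\mathfrak m_r)R_\mathfrak m=(\mathfrak m R_\mathfrak m)_{r_\mathfrak m}\subsetneq R_\mathfrak m$; this will be used repeatedly to keep track of when extensions of $r$-ideals remain proper after localization.

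For part (1) I would argue by contradiction. Let $I\in\mathcal I_r(R)\setminus\{R\}$ and suppose some nonzero $x$ lies in $\bigcap_{n\in\mathbb N_0}(I^n)_r$. Pick an $r$-maximal $\mathfrak m$ with $I\subset\mathfrak m$; since $R$ is a domain, $x/1\ne 0$ in $R_\mathfrak m$, and by the compatibility above $x/1\in\bigcap_{n\in\mathbb N_0}((IR_\mathfrak m)^n)_{r_\mathfrak m}$. Moreover $(IR_\mathfrak m)_{r_\mathfrak m}\subset\mathfrak m R_\mathfrak m\subsetneq R_\mathfrak m$ is a proper $r_\mathfrak m$-ideal of $R_\mathfrak m$, so the $r_\mathfrak m$-Krull Intersection Theorem applied to $(IR_\mathfrak m)_{r_\mathfrak m}$ delivers the contradiction.

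For part (2), take $I,J\in\mathcal I_r(R)$ with $(IJ)_r=I$ and aim to show $J=R$. Localizing at any $\mathfrak m\in r$-$\max(R)$ and invoking the compatibility gives $((IR_\mathfrak m)(JR_\mathfrak m))_{r_\mathfrak m}=(IR_\mathfrak m)_{r_\mathfrak m}$ in $\mathcal I_{r_\mathfrak m}(R_\mathfrak m)$; unit-cancellativity of $\mathcal I_{r_\mathfrak m}(R_\mathfrak m)$ then forces $(JR_\mathfrak m)_{r_\mathfrak m}=R_\mathfrak m$ for every such $\mathfrak m$. If $J\ne R$, then $J$ is a proper $r$-ideal, hence $J\subset\mathfrak n$ for some $r$-maximal $\mathfrak n$, giving $(JR_\mathfrak n)_{r_\mathfrak n}\subset\mathfrak n R_\mathfrak n\subsetneq R_\mathfrak n$, a contradiction. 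Therefore $J=R$, which is exactly the desired unit-cancellativity of $\mathcal I_r(R)$.

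The substance in each case is essentially a one-step localization argument, so the main delicacy is cleanly invoking the compatibility of $r_\mathfrak m$ with localization and the fact that the extension of an $r$-maximal ideal remains a proper $r_\mathfrak m$-ideal of $R_\mathfrak m$. Both are standard consequences of finitariness together with the global convention $\mathcal I_r(R)\subset\mathcal I(R)$, so I do not expect a genuine obstacle beyond carefully quoting them.
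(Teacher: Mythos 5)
Your proposal is correct and follows essentially the same route as the paper: both parts reduce to localizing at an $r$-maximal ideal via the compatibility $((X)_r)_{\mathfrak m}=(X_\mathfrak m)_{r_\mathfrak m}$ for finitary $r$, applying the hypothesis locally, and using that a proper $r$-ideal lies under some $\mathfrak m\in r\text{-}\max(R)$ so that its extension stays proper in $R_\mathfrak m$. The paper phrases part (1) directly (showing $\bigcap_n (I^n)_r\subset\bigcap_n(I_\mathfrak m^n)_{r_\mathfrak m}=\{0\}$) and leaves the final step of part (2) implicit, while you present both as explicit contradictions, but the substance is identical.
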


\begin{proof}
1. Let $I\in\mathcal{I}_r(R)$ be such that $I\not=R$. Since $r$ is finitary, there is some $\mathfrak m\in r$-$\max(R)$ such that $I\subset \mathfrak m$. We have $I_\mathfrak m\in\mathcal{I}_{r_\mathfrak m}(R_\mathfrak m)$ and $I_\mathfrak m\subset \mathfrak m_\mathfrak m\subsetneq R_\mathfrak m$. Therefore, $\bigcap_{n\in\mathbb{N}_0} (I^n)_r\subset\bigcap_{n\in\mathbb{N}_0} (I_\mathfrak m^n)_{r_\mathfrak m}=\{0\}$.

2. Let $I,J\in\mathcal{I}_r(R)$ be such that $(IJ)_r=I$. Then $(I_\mathfrak mJ_\mathfrak m)_{r_\mathfrak m}=((IJ)_r)_\mathfrak m=I_\mathfrak m$ for all $\mathfrak m\in r$-$\max(R)$, and thus $J_\mathfrak m=R_\mathfrak m$ for all $\mathfrak m\in r$-$\max(R)$. Consequently, $J=R$.
\end{proof}

\begin{corollary}\label{4.4}
Let $R$ be a domain and $r$ a finitary ideal system on $R$.
\begin{enumerate}
\item If $R$ is a Mori domain, then $\mathcal{I}_v(R)$ is unit-cancellative if and only if $\mathcal{I}_v(R)$ has no nontrivial idempotents.

\smallskip
\item Let $R$ be a Mori domain or let $\widehat{R}$ be a Krull domain. If $r$-$\max(R)=\mathfrak{X}(R)$, then $\mathcal{I}_r(R)$ is unit-cancellative.
\end{enumerate}
\end{corollary}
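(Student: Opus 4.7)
The forward implication is immediate: if $I \in \mathcal I_v(R)$ is a nontrivial idempotent, then $(I \cdot_v I)_v = I = (I \cdot_v R)_v$, so unit-cancellativity of $\mathcal I_v(R)$ forces $I = R$. For the converse, I plan to apply Lemma \ref{4.1}(2); the no-nontrivial-idempotents condition is given, so it suffices to exhibit, for every $I \in \mathcal I_v(R)$, a minimum of the set $S_I = \{J \in \mathcal I_v(R) : (IJ)_v = I\}$. A routine check shows $S_I$ is closed under $v$-multiplication (since $(IJ_1J_2)_v = ((IJ_1)_v J_2)_v = (IJ_2)_v = I$) and contains $R$, so any minimum $M$ satisfies $(M^2)_v \in S_I$ with $(M^2)_v \subset M$, whence minimality forces $(M^2)_v = M$; the no-nontrivial-idempotents hypothesis then yields $M = R$, and Lemma \ref{4.1}(2) delivers unit-cancellativity. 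The main technical obstacle is producing this minimum in a Mori domain, for which I would exploit the ACC on divisorial ideals: given $J \in S_I$, the ascending chain $(I : J^n)_{n \geq 1}$ stabilizes at a divisorial ideal $B$ with $(B : J) = B$, and I would combine this structural consequence with the no-nontrivial-idempotents hypothesis to rule out infinite strict descent within $S_I$, thereby producing the minimum.

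\textbf{Part 2.} The plan is to localize using Lemma \ref{4.3}(2), reducing the statement to showing unit-cancellativity of $\mathcal I_{r_\mathfrak m}(R_\mathfrak m)$ for each $\mathfrak m \in r$-$\max(R)$. The hypothesis $r$-$\max(R) = \mathfrak X(R)$ ensures each such $\mathfrak m$ is a minimal nonzero prime of $R$, making $R_\mathfrak m$ a one-dimensional local domain, hence primary in the monoid-theoretic sense.

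If $R$ is Mori, then $R_\mathfrak m$ inherits the Mori property under localization at a prime, so $R_\mathfrak m$ is Mori and primary, which by the discussion in Section \ref{4} is strongly primary. Proposition \ref{4.2}(1) then gives the $r_\mathfrak m$-Krull Intersection Theorem and Lemma \ref{4.1}(1) concludes. If instead $\widehat R$ is Krull, then $\widehat{R_\mathfrak m}$ coincides with a localization of the Krull domain $\widehat R$ at the finitely many height-one primes of $\widehat R$ lying over $\mathfrak m$, making it a one-dimensional semilocal Krull domain, i.e., a semilocal PID; since $R_\mathfrak m$ is archimedean (as a subring of the Krull, hence archimedean, domain $\widehat{R_\mathfrak m}$) and $r_\mathfrak m$ is finitary (inherited from $r$), Proposition \ref{4.2}(2) applies. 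The only routine-but-nontrivial verifications here are that localization of a Mori domain preserves the Mori property and that in the Krull case there are only finitely many primes of $\widehat R$ above $\mathfrak m$, both of which are standard consequences of the structural hypotheses.
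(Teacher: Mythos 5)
In Part 1, the forward direction and the invocation of Lemma \ref{4.1}.2 are fine, but your route to producing the minimal element is both off-track and incomplete. You never say how the stabilization of $(I:J^n)$ together with the idempotent hypothesis is supposed to rule out infinite strict descent in $S_I$, and in fact the idempotent hypothesis plays no role at this step at all. What you are missing is the simple observation that $S_I=\{J\in\mathcal I_v(R)\mid (IJ)_v=I\}$ is contained in $\{J\in\mathcal I_v(R)\mid I\subset J\}$: from $(IJ)_v=I$ and $IJ\subset J$ one gets $I=(IJ)_v\subset J_v=J$. In a Mori domain the poset of divisorial ideals between a fixed nonzero $I$ and $R$ satisfies the descending chain condition (a descending chain $J_1\supset J_2\supset\cdots\supset I$ gives, after passing to inverses and multiplying by a fixed nonzero $a\in I$, an ascending chain $aJ_1^{-1}\subset aJ_2^{-1}\subset\cdots$ of integral $v$-ideals, which stabilizes), so the nonempty set $S_I$ has a minimal element and the argument closes with no further input. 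That is exactly what the paper does.

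In Part 2, Case 1 matches the paper, but in Case 2 two of the steps you dismiss as routine are not. First, the inference ``$R_{\mathfrak m}$ is archimedean as a subring of the archimedean domain $\widehat{R_{\mathfrak m}}$'' is invalid: a non-unit of $R_{\mathfrak m}$ can be a unit of $\widehat{R_{\mathfrak m}}$, in which case $\bigcap_n a^n\widehat{R_{\mathfrak m}}$ gives no information about $\bigcap_n a^n R_{\mathfrak m}$. The reason $R_{\mathfrak m}$ is archimedean is that it is a one-dimensional local domain: for $0\neq a\in\mathfrak m_{\mathfrak m}$, the ring $R_{\mathfrak m}[a^{-1}]$ has no nonzero primes and so equals the quotient field, while $\bigcap_n a^nR_{\mathfrak m}$ is an $R_{\mathfrak m}[a^{-1}]$-submodule of the quotient field contained in $R_{\mathfrak m}$, hence zero. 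Second, the claim that $\widehat{R_{\mathfrak m}}$ is the localization of $\widehat R$ at finitely many height-one primes over $\mathfrak m$ is asserted, not proved; the finiteness is not evident from your setup. The paper derives semilocality differently: $R_{\mathfrak m}$ is a $\G$-domain (being one-dimensional local), hence so is its overring $\widehat{R}_{\mathfrak m}$, and a Krull domain that is a $\G$-domain has finite spectrum, is at most one-dimensional by prime avoidance and Krull's principal ideal theorem, and is therefore a semilocal principal ideal domain. These points need actual arguments rather than an appeal to them being standard.
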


\begin{proof}
1. Let $R$ be a Mori domain and $I\in\mathcal{I}_v(R)$. Note that $R\in\{J\in\mathcal{I}_v(R)\mid (IJ)_v=I\}\subset\{J\in\mathcal{I}_v(R)\mid I\subset J\}$. Since $R$ is a Mori domain we infer that $\{J\in\mathcal{I}_v(R)\mid (IJ)_v=I\}$ has a minimal element with respect to inclusion. Now the assertion follows from Lemma \ref{4.1}.2.

\smallskip
2. Let $r$-$\max(R)=\mathfrak{X}(R)$. By Lemma \ref{4.3}.2 it is sufficient to show that $\mathcal{I}_{r_\mathfrak m}(R_\mathfrak m)$ is unit-cancellative for all $\mathfrak m\in\mathfrak{X}(R)$. Let $\mathfrak m\in\mathfrak{X}(R)$.

CASE 1: \, $R$ is a Mori domain. Then $R_\mathfrak m$ is a one-dimensional local Mori domain.

It follows that $R_\mathfrak m$ is a strongly primary domain, hence $\mathcal{I}_{r_\mathfrak m}(R_\mathfrak m)$ is unit-cancellative by Lemma \ref{4.1}.1 and Proposition \ref{4.2}.1.

CASE 2: \, $\widehat{R}$ is a Krull domain.

Clearly, $R_\mathfrak m$ is a one-dimensional local domain, and thus $R_\mathfrak m$ is an archimedean $\G$-domain. Moreover, we have $\widehat{R_\mathfrak m}=\widehat{R}_\mathfrak m$ is a Krull domain and a $\G$-domain. Therefore, $\widehat{R_\mathfrak m}$ is a semilocal principal ideal domain. (A Krull domain that is a $\G$-domain has finite prime spectrum. Therefore, it is semilocal and it is at most one-dimensional by prime avoidance (since a Krull domain satisfies Krull's principal ideal theorem). A Krull domain that is at most one-dimensional is a Dedekind domain and a semilocal Dedekind domain is a principal ideal domain.) It is clear that $r_\mathfrak m$ is a finitary ideal system on $R_\mathfrak m$. We infer by Proposition \ref{4.2}.2 that $\mathcal{I}_{r_\mathfrak m}(R_\mathfrak m)$ is unit-cancellative.
\end{proof}

\smallskip
A domain is called a {\it Cohen-Kaplansky domain} if it is atomic and has only finitely many atoms up to associates. Our next result is well-known for usual ring ideals (see \cite[Theorem 4.3]{An-Mo92} and \cite{An-Ch08a}).

\begin{proposition}\label{4.5}
Let $R$ be a domain and $r$ an ideal system on $R$. Then the following statements are equivalent{\rm \,:}
\begin{itemize}
\item[(a)] $R$ is a Cohen-Kaplansky domain.

\item[(b)] $\mathcal{I}_r(R)$ is a finitely generated semigroup.

\item[(c)] $\mathcal{I}_r^*(R)$ is a finitely generated semigroup.
\end{itemize}
\end{proposition}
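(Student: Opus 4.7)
My plan is to establish the equivalences as a cycle (a) $\Rightarrow$ (b) $\Rightarrow$ (c) $\Rightarrow$ (a), with the classical case $r=d$ (\cite[Theorem 4.3]{An-Mo92}) as the base. For (a) $\Rightarrow$ (b), the classical result gives generators $I_1, \ldots, I_n$ of $\mathcal{I}(R) = \mathcal{I}_d(R)$ under $d$-multiplication. Because $\mathcal{I}_r(R) \subset \mathcal{I}(R)$ (the paper's blanket assumption), every $J \in \mathcal{I}_r(R)$ admits a $d$-product representation $J = I_{i_1} \cdots I_{i_k}$; applying the $r$-closure to both sides and iterating the standard identity $((X_r)(Y_r))_r = (XY)_r$ (\cite{HK98}) yields $J = J_r = (I_{i_1})_r \cdot_r \cdots \cdot_r (I_{i_k})_r$, so $\{(I_1)_r, \ldots, (I_n)_r\}$ generates $\mathcal{I}_r(R)$ under $r$-multiplication.

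For (b) $\Rightarrow$ (c), I would observe that $\mathcal{I}_r^*(R)$ is a divisor-closed subsemigroup of $\mathcal{I}_r(R)$: if $I = J \cdot_r K$ with $I \in \mathcal{I}_r^*(R)$ and $J, K \in \mathcal{I}_r(R)$, then computing $(J K I^{-1})_r = ((JK)_r I^{-1})_r = (II^{-1})_r = R$ exhibits $(K I^{-1})_r$ as a fractional $r$-inverse of $J$; hence $J \in \mathcal{I}_r^*(R)$, and symmetrically $K \in \mathcal{I}_r^*(R)$. A divisor-closed subsemigroup of a finitely generated commutative semigroup is itself finitely generated, since its atoms must sit among the finitely many atoms of the ambient semigroup and atomicity is inherited from the divisor-closedness.

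For (c) $\Rightarrow$ (a), the map $x \mapsto xR$ embeds $R^\bullet/R^\times$ as a saturated submonoid of $\mathcal{I}_r^*(R)_{\red}$. Saturation is direct: if $xR = I \cdot_r yR = yI$ (using $(yI)_r = y I_r = yI$ for the $r$-ideal $I$), then $I = (x/y)R$ is principal. Since $\mathcal{I}_r^*(R)$ is a finitely generated cancellative commutative monoid, its saturated submonoids remain finitely generated via the affine-monoid viewpoint: the reduced monoid embeds into $\mathbb{Z}^n$, and by Gordan's lemma the intersection with any sublattice is again a finitely generated affine monoid. Therefore $R^\bullet_{\red}$ is finitely generated, so $R$ has only finitely many atoms up to associates; atomicity of $R$ itself follows from the BF-property of $\mathcal{I}_r^*(R)$ (\cite[Proposition 3.4]{F-G-K-T17}) transferring the ACCP on principal ideals to $R$ through the embedding.

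The main hurdle is the last step of (c) $\Rightarrow$ (a): invoking Gordan's lemma (or an equivalent affine-monoid result) to deduce finite generation from saturation. This is classical in combinatorial commutative algebra but lies slightly outside the factorization-theoretic toolkit used elsewhere in the paper, and care is needed to handle possible torsion in $\mathsf q(\mathcal{I}_r^*(R))$ before applying the lattice-cone argument.
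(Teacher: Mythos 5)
Your proof mirrors the paper's structure for all three implications: $(a)\Rightarrow(b)$ by applying the $r$-closure to a $d$-generating set of $\mathcal I(R)$, $(b)\Rightarrow(c)$ by observing that $\mathcal I_r^*(R)$ sits divisor-closed in $\mathcal I_r(R)$ so a generating set passes through, and $(c)\Rightarrow(a)$ by exhibiting the monoid of nonzero principal ideals as a saturated submonoid of the finitely generated cancellative monoid $\mathcal I_r^*(R)$ and then invoking \cite[Theorem 4.3]{An-Mo92}. The substance is the same as the paper's in each step.

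The hurdle you flagged at the end of $(c)\Rightarrow(a)$ is, however, a genuine gap as written. Gordan's lemma gives finite generation of the lattice points in a rational polyhedral cone, i.e., of a \emph{normal} affine monoid. Here $\mathcal I_r^*(R)$ is finitely generated, reduced and cancellative, but nothing forces it to be root-closed (normal) in its quotient group, and a saturated submonoid $S$ of a non-normal affine monoid $H\subset\Z^n$ need not be recoverable as a cone section: passing to $C=\R_{\ge 0}H$ gives $S\subsetneq C\cap\mathsf q(S)$ in general, and Gordan's lemma only controls the larger monoid. The finiteness statement you actually need --- that saturated submonoids of finitely generated reduced cancellative monoids are finitely generated --- is true but is established by a Noetherian/Dickson-type argument, not a cone argument; it is exactly \cite[Proposition 2.7.5]{Ge-HK06a}, which the paper already cites in the proof of Theorem \ref{3.1}. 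Substituting that reference closes the gap, and it also makes your BF-property detour for atomicity of $R$ unnecessary, since \cite[Theorem 4.3]{An-Mo92} characterizes Cohen--Kaplansky domains precisely by finite generation of the monoid of nonzero principal ideals. One further small point in $(b)\Rightarrow(c)$: under the hypotheses of this proposition $\mathcal I_r(R)$ need not be unit-cancellative (none of Lemma \ref{4.1} or Corollary \ref{4.4} is in force), so ``atom'' is not the right notion; the argument should run through a chosen finite generating set $E$, as the paper does, concluding $\mathcal I_r^*(R)=[E\cap\mathcal I_r^*(R)]$ from your divisor-closedness observation.
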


\begin{proof}
First let $R$ be a Cohen-Kaplansky domain. We know that $\mathcal{I}(R)$ is a finitely generated semigroup by \cite[Theorem 4.3]{An-Mo92}, and thus there is some finite $E\subset\mathcal{I}(R)$ such that $\mathcal{I}(R)=[E]$. Set $F=\{J_r\mid J\in E\}$. Then $F$ is finite and it suffices to show that $\mathcal{I}_r(R)=[F]$. It is clear that $[F]\subset\mathcal{I}_r(R)$. Let $I\in\mathcal{I}_r(R)$. Then $I\in\mathcal{I}(R)$, and thus there is some $(\alpha_J)_{J\in E}\in\mathbb{N}_0^E$ such that $I=\prod_{J\in E} J^{\alpha_J}$. We infer that $I=I_r=(\prod_{J\in E} (J_r)^{\alpha_J})_r\in [F]$.

Next let $\mathcal{I}_r(R)$ be a finitely generated semigroup. Then there is some finite $E\subset\mathcal{I}_r(R)$ such that $\mathcal{I}_r(R)=[E]$. Set $F=E\cap\mathcal{I}_r^*(R)$. Then $F$ is finite and it suffices to show that $\mathcal{I}_r^*(R)=[F]$. Obviously, $[F]\subset\mathcal{I}_r^*(R)$. Let $I\in\mathcal{I}_r^*(R)$. Then $I\in\mathcal{I}_r(R)$ and hence there is some $(\alpha_J)_{J\in E}\in\mathbb{N}_0^E$ such that $I=(\prod_{J\in E} J^{\alpha_J})_r$. Recall that if $A\in\mathcal{I}_r^*(R)$ and $B,C\in\mathcal{I}_r(R)$ are such that $A=(BC)_r$, then $B,C\in\mathcal{I}_r^*(R)$. This implies that $\alpha_J=0$ or $J\in F$ for all $J\in E$. Therefore, $I=(\prod_{J\in F} J^{\alpha_J})_r\in [F]$.

Finally let $\mathcal{I}_r^*(R)$ be a finitely generated semigroup. Note that $\mathcal{I}_r^*(R)$ is a cancellative monoid and $\{xR\mid x\in R^{\bullet}\}$ is a saturated submonoid of $\mathcal{I}_r^*(R)$. We infer that $\{xR\mid x\in R^{\bullet}\}$ is a finitely generated monoid, and thus the monoid of principal ideals of $R$ is finitely generated. Consequently, $R$ is a Cohen-Kaplansky domain by \cite[Theorem 4.3]{An-Mo92}.
\end{proof}

Let $R$ be a domain, $r$ an ideal system on $R$ and $I,J\in\mathcal{I}_r(R)$. We say that $I$ and $J$ are {\it $r$-coprime} if $R$ is the only $r$-ideal containing $I$ and $J$.

\begin{lemma}\label{4.6}
Let $R$ be a domain and $r$ a finitary ideal system on $R$. Let $I,J,L\in\mathcal{I}_r(R)$, $n\in\mathbb{N}_0$ and $(I_i)_{i=1}^n$ a finite sequence of elements of $\mathcal{I}_r(R)$.
\begin{enumerate}
\item $I$ and $J$ are $r$-coprime if and only if $\sqrt{I}$ and $\sqrt{J}$ are $r$-coprime. If these equivalent condition are satisfied, then $(IJ)_r=I\cap J$.

\item If $I$ and $J$ are $r$-coprime and $I$ and $L$ are $r$-coprime, then $I$ and $J\cap L$ are $r$-coprime.

\item If for all distinct $i,j\in [1,n]$, $I_i$ and $I_j$ are $r$-coprime, then $(\prod_{i=1}^n I_i)_r=\bigcap_{i=1}^n I_i$.
\end{enumerate}
\end{lemma}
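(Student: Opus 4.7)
The plan is to prove the three parts in order, exploiting two standard facts from the finitary ideal theory of \cite{HK98}: every proper $r$-ideal is contained in an $r$-maximal $r$-ideal, and every $r$-maximal $r$-ideal is prime. I would also use that arbitrary intersections of $r$-ideals are again $r$-ideals (since $X\subset Y_r$ forces $X_r\subset Y_r$ by the axioms) and that for an $r$-ideal $I$ the radical $\sqrt{I}$ is again an $r$-ideal. Rephrased, two $r$-ideals $A,B$ are $r$-coprime if and only if $(A+B)_r=R$, equivalently no $r$-maximal $r$-ideal contains both.

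For Part 1, the equivalence is immediate from the reformulation above: if $\mathfrak{m}$ is an $r$-maximal (hence prime) $r$-ideal, then $I\subset\mathfrak{m}$ iff $\sqrt{I}\subset\mathfrak{m}$, so the set of $r$-maximal $r$-ideals containing $I$ coincides with the set of those containing $\sqrt{I}$. For the identity $(IJ)_r=I\cap J$, the inclusion $(IJ)_r\subset I\cap J$ is automatic since $I\cap J$ is an $r$-ideal containing $IJ$. For the reverse, coprimeness gives $(I+J)_r=R$, so using $(I\cap J)I\subset IJ$ and $(I\cap J)J\subset IJ$ one computes
\[
I\cap J \;=\; (I\cap J)\cdot_r R \;=\; (I\cap J)\cdot_r(I+J)_r \;=\; \bigl((I\cap J)(I+J)\bigr)_r \;\subset\; (IJ)_r.
\]

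For Part 2, I would multiply out the two coprimeness relations:
\[
R \;=\; R\cdot_r R \;=\; (I+J)_r\cdot_r(I+L)_r \;=\; \bigl(I^{2}+IL+JI+JL\bigr)_r \;\subset\; (I+JL)_r \;\subset\; \bigl(I+(J\cap L)\bigr)_r,
\]
which forces $(I+(J\cap L))_r=R$. For Part 3, I would proceed by induction on $n$, the cases $n\in\{0,1\}$ being trivial. For the inductive step, iterating Part 2 shows that $I_n$ is $r$-coprime to $\bigcap_{i=1}^{n-1}I_i$; applying Part 1 together with the induction hypothesis then gives
\[
\Bigl(\prod_{i=1}^{n}I_i\Bigr)_r \;=\; \Bigl(I_n\cdot\prod_{i=1}^{n-1}I_i\Bigr)_r \;=\; I_n\cap\Bigl(\prod_{i=1}^{n-1}I_i\Bigr)_r \;=\; I_n\cap\bigcap_{i=1}^{n-1}I_i \;=\; \bigcap_{i=1}^{n}I_i.
\]

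None of the steps is really a serious obstacle; the main care needed is purely bookkeeping — making sure each manipulation of the $r$-closure is legitimate (in particular that $(AB)_r$ can be replaced by $(A_rB_r)_r$, that $\sqrt{I}$ is an $r$-ideal in the finitary setting, and that $r$-maximal $r$-ideals are prime) so that the multiplicative computations above actually take place inside $\mathcal{I}_r(R)$.
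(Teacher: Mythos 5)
Your proof is correct. Parts 1 and 3 follow essentially the same route as the paper: the equivalence in Part 1 via $r$-maximal $r$-ideals, the identity $(IJ)_r=I\cap J$ via the ideal-system axiom $cX_r=(cX)_r$ (the paper does this element-wise, $x\in xR=x(I\cup J)_r=(xI\cup xJ)_r\subset (IJ)_r$, whereas you work with the whole ideal $(I\cap J)(I+J)\subset IJ$; these are the same idea, and $(I\cup J)_r=(I+J)_r$ because $r$-ideals are ring ideals), and Part 3 is the same induction, which the paper merely states.

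Part 2 is where you genuinely diverge. The paper argues by contrapositive and uses primality of $r$-maximal $r$-ideals: if some $\mathfrak m\in r\text{-}\max(R)$ contains $I$ and $J\cap L\supset JL$, then $\mathfrak m$ contains $J$ or $L$, so one of the original coprimeness relations fails. You instead multiply the two coprimeness relations $(I+J)_r=R$ and $(I+L)_r=R$ and use $(I+J)(I+L)\subset I+JL\subset I+(J\cap L)$ to conclude $(I+(J\cap L))_r=R$ directly. Both are valid; the paper's version is slightly shorter once one knows $r$-maximal $r$-ideals are prime, while yours is purely computational and makes no appeal to primality. One small point of care in your computation: the step $(I+J)_r\cdot_r(I+L)_r=\bigl((I+J)(I+L)\bigr)_r$ relies on the standard identity $(A_rB_r)_r=(AB)_r$, which indeed follows by applying $cX_r=(cX)_r$ twice; since you flag exactly this kind of bookkeeping at the end, the proof is sound as written.
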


\begin{proof}
1. Since $r$ is finitary, every proper $r$-ideal of $R$ is contained in an $r$-maximal $r$-ideal. Moreover, the radical of every $r$-ideal is again an $r$-ideal. Therefore, $I$ and $J$ are not $r$-coprime if and only if there is some $\mathfrak m\in r$-$\max(R)$ such that $I,J\subset M$ if and only if there is some $\mathfrak m\in r$-$\max(R)$ such that $\sqrt{I},\sqrt{J}\subset\mathfrak m$ if and only if $\sqrt{I}$ and $\sqrt{J}$ are not $r$-coprime.

Now let $I$ and $J$ be $r$-coprime. Then $(I\cup J)_r=R$. Clearly, $(IJ)_r\subset I\cap J$. Let $x\in I\cap J$. Then $x\in xR=x(I\cup J)_r=(xI\cup xJ)_r\subset (IJ)_r$.

2. Let $I$ and $J\cap L$ be not $r$-coprime. Then there is some $\mathfrak m\in r$-$\max(R)$ such that $I,J\cap L\subset\mathfrak m$. Therefore, $J\subset\mathfrak m$ or $L\subset\mathfrak m$. Without restriction let $J\subset\mathfrak m$. Then $I$ and $J$ are not $r$-coprime.

3. This follows by induction from 1 and 2.
\end{proof}

\begin{proposition}\label{4.7}
Let $R$ be a domain and $r$ an $r$-Noetherian ideal system on $R$ with $r$-$\max(R)=\mathfrak{X}(R)$. Then $\mathcal{I}_r(R)$ is an atomic monoid and $\sqrt{J}\in\mathfrak{X}(R)$ for every $J\in\mathcal{A}(\mathcal{I}_r(R))$.
\end{proposition}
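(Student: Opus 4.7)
The plan is to establish three facts in succession: that $\mathcal{I}_r(R)$ is unit-cancellative (so that it really is a monoid in the paper's sense), that $\mathcal{I}_r(R)$ is atomic, and that every atom $J$ satisfies $\sqrt{J} \in \mathfrak{X}(R)$.

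For the first, $r$-Noetherian implies $r$ is finitary, so Lemma \ref{4.3}.2 lets me localize: it suffices to prove $\mathcal{I}_{r_\mathfrak{m}}(R_\mathfrak{m})$ is unit-cancellative for each $\mathfrak{m} \in r$-$\max(R) = \mathfrak{X}(R)$. Such an $\mathfrak{m}$ is a minimal nonempty prime $s$-ideal, so $R_\mathfrak{m}$ is a one-dimensional local domain, and in particular a $\G$-domain (the unique nonempty prime $s$-ideal $\mathfrak{m}R_\mathfrak{m}$ is trivially the intersection of all nonempty prime $s$-ideals). Being additionally $r_\mathfrak{m}$-Noetherian, $R_\mathfrak{m}$ falls under Proposition \ref{4.2}.3, which supplies the needed unit-cancellativity.

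With unit-cancellativity in place, atomicity follows by the standard ACC argument. If the set of proper $r$-ideals failing to be a finite $r$-product of atoms were non-empty, ACC on $r$-ideals would yield a maximal such $I$. Writing $I = (JK)_r$ with $J, K$ non-units (possible since $I$ is not an atom), unit-cancellativity rules out $J = I$ and $K = I$, so $I \subsetneq J$ and $I \subsetneq K$. Maximality then gives atom-factorizations of $J$ and $K$, and combining them yields one for $I$ --- a contradiction.

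The substantive part is the third step. Fix an atom $J$ and let $\mathfrak{p}_1, \ldots, \mathfrak{p}_k \in \mathfrak{X}(R)$ be the finitely many $r$-maximal $r$-ideals containing $J$, so $\sqrt{J} = \bigcap_{i=1}^k \mathfrak{p}_i$; the goal is $k = 1$. Assume instead $k \ge 2$, set $\mathfrak{q} := \mathfrak{p}_1$ and $\mathfrak{r} := (\mathfrak{p}_2 \cdots \mathfrak{p}_k)_r = \bigcap_{j \ge 2} \mathfrak{p}_j$ (Lemma \ref{4.6}.3), and note that $\mathfrak{q}$ and $\mathfrak{r}$ are $r$-coprime by Lemma \ref{4.6}.2. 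An $r$-Noetherian analogue of the classical nilpotent-radical argument --- combining the distributive identity $(X_r Y_r)_r = (XY)_r$ (a consequence of the ideal-system axioms) with a finite $r$-generating set for $\sqrt{J}$ whose elements have powers in $J$ --- produces $n \in \N$ with $((\mathfrak{q}\mathfrak{r})^n)_r \subset J$. Set $A := (J + \mathfrak{q}^n)_r$ and $B := (J + \mathfrak{r}^n)_r$. Each is a proper $r$-ideal (contained in $\mathfrak{q}$ and $\mathfrak{r}$ respectively), and the pair is $r$-coprime because their radicals contain the $r$-coprime $\mathfrak{q}$ and $\mathfrak{r}$. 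Lemma \ref{4.6}.1 then gives $(AB)_r = A \cap B$; direct expansion shows $AB \subset J + (\mathfrak{q}\mathfrak{r})^n \subset J$, so $(AB)_r \subset J$, while $J \subset A \cap B$ is immediate. Hence $J = (AB)_r$ is a nontrivial factorization, contradicting the atomicity of $J$. The main technical hurdle is producing the exponent $n$; once that is in hand, the rest slots together via Lemma \ref{4.6} and the unit-cancellativity from step one.
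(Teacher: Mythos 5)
Your proof is correct, but the route for the key third step --- that every atom of $\mathcal I_r(R)$ has radical in $\mathfrak X(R)$ --- genuinely differs from the paper's. The paper first establishes the stronger structural fact that \emph{every} nonzero $r$-ideal $I$ factors as $I=\bigl(\prod_{\mathfrak p}(I_{\mathfrak p}\cap R)\bigr)_r$, a finite $r$-product of pairwise $r$-coprime $\mathfrak p$-primary $r$-ideals indexed by the (finitely many, since $R$ is weakly Krull) $\mathfrak p\in\mathfrak X(R)$ containing $I$; an atom must therefore equal a single primary factor, and the conclusion is immediate. You instead argue by contradiction: assuming $\sqrt J$ is cut out by $k\ge 2$ primes in $\mathfrak X(R)$, you invoke the $r$-Noetherian nilpotent-radical trick to get $n$ with $\bigl((\mathfrak q\mathfrak r)^n\bigr)_r\subset J$ and then exhibit an explicit nontrivial factorization $J=(AB)_r$ with $A=(J+\mathfrak q^n)_r$ and $B=(J+\mathfrak r^n)_r$. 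Both routes rest on the same ingredients (weakly Krull finiteness plus Lemma \ref{4.6}); the paper's version buys a canonical primary decomposition valid for all $r$-ideals, while yours is a more local, targeted witness of non-atomicity. Two details worth making explicit in your argument: the identity $\sqrt J=\bigcap_{i=1}^k\mathfrak p_i$ and the finiteness of $k$ both depend on $R$ being weakly Krull (which follows from $r$-Noetherian together with $r$-$\max(R)=\mathfrak X(R)$, as the paper records); and since $A,B$ are the $r$-closures rather than the raw sums, the inclusion $(AB)_r\subset J$ should be routed through $(AB)_r=\bigl((J+\mathfrak q^n)(J+\mathfrak r^n)\bigr)_r$ via the standard identity $(X_rY_r)_r=(XY)_r$. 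Your alternative for unit-cancellativity --- localize via Lemma \ref{4.3}.2 and then apply Proposition \ref{4.2}.3 to the $\G$-domain $R_{\mathfrak m}$ --- is also perfectly valid and simply replaces the paper's use of Corollary \ref{4.4}.2, which internally goes through the strongly-primary criterion of Proposition \ref{4.2}.1 instead.
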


\begin{proof}
First we show that $\mathcal{I}_r(R)$ is an atomic monoid. Since $R$ is $r$-Noetherian, we infer that $R$ is a Mori domain and $r$ is a finitary ideal system. Therefore, $\mathcal{I}_r(R)$ is unit-cancellative by Corollary \ref{4.4}.2.

Next we show that $\mathcal{I}_r(R)$ satisfies the ACCP. Let $(J_i)_{i\in\mathbb{N}}$ be an ascending chain of principal ideals of $\mathcal{I}_r(R)$. There is some sequence $(I_i)_{i\in\mathbb{N}}$ of elements of $\mathcal{I}_r(R)$ such that $J_i=\{(I_iA)_r\mid A\in\mathcal{I}_r(R)\}$ for each $i\in\mathbb{N}$. Let $j\in\mathbb{N}$. Then $I_j=(I_jR)_r\in J_j\subset J_{j+1}$, and thus there is some $B\in\mathcal{I}_r(R)$ such that $I_j=(I_{j+1}B)_r$. We infer that $I_i\subset I_{i+1}$ for every $i\in\mathbb{N}$. Since $R$ is $r$-Noetherian, there is some $m\in\mathbb{N}$ such that $I_n=I_m$ for all $n\in\mathbb{N}_{\geq m}$. This clearly implies that $J_n=J_m$ for all $n\in\mathbb{N}_{\geq m}$. Therefore, $\mathcal{I}_r(R)$ satisfies the ACCP. It follows from \cite[Lemma 3.1(1)]{F-G-K-T17} that $\mathcal{I}_r(R)$ is atomic.

Let $I\in\mathcal{I}_r(R)$. We show that $I=(\prod_{\mathfrak p\in\mathfrak{X}(R),I\subset\mathfrak p} (I_\mathfrak p\cap R))_r$ and $\sqrt{I_\mathfrak q\cap R}=\mathfrak q$ for every $\mathfrak q\in\mathfrak{X}(R)$ with $I\subset\mathfrak q$. Clearly, $R$ is a weakly Krull domain, and thus $\{\mathfrak q\in\mathfrak{X}(R)\mid I\subset\mathfrak q\}$ is finite. Moreover, $I=\bigcap_{\mathfrak p\in\mathfrak{X}(R)} I_\mathfrak p=\bigcap_{\mathfrak p\in\mathfrak{X}(R),I\subset\mathfrak p} I_\mathfrak p=\bigcap_{\mathfrak p\in\mathfrak{X}(R),I\subset\mathfrak p} (I_\mathfrak p\cap R)$. Let $\mathfrak q\in\mathfrak{X}(R)$ with $I\subset\mathfrak q$. Since $r$ is finitary it is straightforward to prove that $I_\mathfrak q\cap R$ is an $r$-ideal of $R$. Moreover, $\sqrt{I_\mathfrak q}=\mathfrak q_\mathfrak q$ (since $R_\mathfrak q$ is a one-dimensional local domain), and hence $\sqrt{I_\mathfrak q\cap R}=\mathfrak q$. Therefore, $\bigcap_{\mathfrak p\in\mathfrak{X}(R),I\subset\mathfrak p} (I_\mathfrak p\cap R)$ is a finite intersection of pairwise $r$-coprime $r$-ideals of $R$ by Lemma \ref{4.6}.1 (since their radicals are pairwise $r$-coprime $r$-ideals of $R$). Consequently, $I=(\prod_{\mathfrak p\in\mathfrak{X}(R),I\subset\mathfrak p} (I_\mathfrak p\cap R))_r$ by Lemma \ref{4.6}.3.

Let $J\in\mathcal{A}(\mathcal{I}_{r}(R))$. Then $J=(\prod_{\mathfrak p\in\mathfrak{X}(R),J\subset\mathfrak p} (J_\mathfrak p\cap R))_r$, hence $J=J_\mathfrak p\cap R$ for some $\mathfrak p\in\mathfrak{X}(R)$ such that $J\subset\mathfrak p$. We infer that $\sqrt{J}=\sqrt{J_\mathfrak p\cap R}=\mathfrak p\in\mathfrak{X}(R)$.
\end{proof}

\medskip
\begin{theorem}\label{4.8}
Let $R$ be a domain with $(R:\widehat{R})\not=\{0\}$ and $r$ an $r$-Noetherian ideal system on $R$ with $r$-$\max(R)=\mathfrak{X}(R)$. We set $\mathcal{P}=\{\mathfrak p\in\mathfrak{X}(R)\mid\mathfrak p\not\supset (R:\widehat{R})\}$, $\mathcal{P}^*=\mathfrak{X}(R)\setminus\mathcal{P}$, and let $T$ be the subsemigroup of $\mathcal{I}_r(R)$ generated by $\{I\in\mathcal{A}(\mathcal{I}_r(R))\mid\sqrt{I}\in\mathcal{P}^*\}$.
\begin{enumerate}
\item $T$ is a monoid and $\mathcal{I}_r(R)\cong\mathcal{F}(\mathcal{P})\times T$.

\smallskip
\item $T$ is finitely generated if and only if $\{I\in\mathcal{A}(\mathcal{I}_r(R))\mid\sqrt{I}\in\mathcal{P}^*\}$ is finite.
\end{enumerate}
\end{theorem}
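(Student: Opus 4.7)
The plan is to use the atomic $r$-coprime decomposition of Proposition \ref{4.7} together with a local analysis at each minimal prime, separating those $\mathfrak p\in\mathcal P$ (where $R_\mathfrak p$ turns out to be a DVR, giving a free $\mathbb N_0$ contribution) from those in $\mathcal P^*$ (whose primary parts get absorbed into $T$).

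First I would verify the standing facts. Since every $v$-ideal is an $r$-ideal (because $X_r\subset X_v$), $R$ is a Mori domain and $\widehat R$ is a Krull domain; $\mathcal I_r(R)$ is unit-cancellative (hence reduced) by Corollary \ref{4.4}.2 and atomic with atoms having radicals in $\mathfrak X(R)$ by Proposition \ref{4.7}. In addition, each $\mathfrak p\in\mathfrak X(R)$ is itself an atom of $\mathcal I_r(R)$: if $(IJ)_r=\mathfrak p$ with $J\subset\mathfrak p$ (permitted by primality), then $\mathfrak p\subseteq(I\mathfrak p)_r\subseteq\mathfrak p$, so unit-cancellativity forces $I=R$. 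For $\mathfrak p\in\mathcal P$, picking $f\in(R\DP\widehat R)\setminus\mathfrak p$ yields $\widehat R\subseteq R_\mathfrak p$, so $R_\mathfrak p=\widehat R_\mathfrak p$ is a one-dimensional local Krull domain, hence a DVR. This forces every $r$-ideal $I$ with $\sqrt I=\mathfrak p$ to equal $(\mathfrak p^{v_\mathfrak p(I)})_r$ for a unique $v_\mathfrak p(I)\in\mathbb N_0$ (the DVR-exponent of $I_\mathfrak p$), so $\mathfrak p$ is the only atom of $\mathcal I_r(R)$ with radical $\mathfrak p$.

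For (1), I would split the decomposition $I=\bigl(\prod_{\mathfrak p\in\mathfrak X(R),\,I\subset\mathfrak p}(I_\mathfrak p\cap R)\bigr)_r$ from Proposition \ref{4.7} into its $\mathcal P$- and $\mathcal P^*$-primary pieces and define
\[
\psi\colon\mathcal I_r(R)\to\mathcal F(\mathcal P)\times T,\qquad I\longmapsto\bigl((v_\mathfrak p(I))_{\mathfrak p\in\mathcal P},\,I^{*}\bigr),
\]
where $I^{*}=\bigl(\prod_{\mathfrak p\in\mathcal P^{*},\,I\subset\mathfrak p}(I_\mathfrak p\cap R)\bigr)_r$. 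Membership $I^{*}\in T$ follows because any atomic factorization of a $\mathfrak q$-primary $r$-ideal uses only atoms whose radicals are minimal primes of $R$ contained in $\mathfrak q$, hence equal to $\mathfrak q$. Multiplicativity of $\psi$ comes from additivity of the DVR-exponents and from Lemma \ref{4.6}.3 applied to the $r$-coprime pieces, while $((n_\mathfrak p)_\mathfrak p,J)\mapsto\bigl(\prod_{\mathfrak p\in\mathcal P}\mathfrak p^{n_\mathfrak p}\bigr)_r\cdot_r J$ serves as a two-sided inverse (noting that for distinct $r$-maximals $\mathfrak p\ne\mathfrak q$ the localizations kill each other). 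Since $\mathcal F(\mathcal P)$ is reduced, $\psi$ realizes $T$ as a divisor-closed subsemigroup of $\mathcal I_r(R)$, from which unit-cancellativity of $T$ is inherited, confirming that $T$ is a monoid.

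For (2), $T$ is atomic, and its atoms are exactly $\{I\in\mathcal A(\mathcal I_r(R))\mid\sqrt I\in\mathcal P^{*}\}$: these lie in $T$ by definition, and under $\psi$ they correspond to the atoms of $\mathcal F(\mathcal P)\times T$ of the form $(0,J)$, so they are precisely the atoms of $T$. Since any generating set of an atomic reduced monoid must contain all of its atoms, $T$ is finitely generated iff this atom set is finite. The main obstacle I anticipate is keeping the multiplicativity of $\psi$ under control — in particular, verifying that the $\mathcal P$- and $\mathcal P^{*}$-parts of an $r$-product coincide with the products of the respective parts of the factors — but this reduces cleanly to Lemma \ref{4.6} and the DVR structure established in the first step.
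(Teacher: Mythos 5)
Your proposal is correct and follows essentially the same route as the paper: both rest on the $r$-coprime atomic decomposition of Proposition \ref{4.7}, the observation that $R_\mathfrak p$ is a DVR for $\mathfrak p\in\mathcal P$ (so each such $\mathfrak p$ is $r$-invertible and the unique atom above it), and the radical condition identifying $\mathcal A(T)$ with the atoms having radical in $\mathcal P^*$. The only cosmetic difference is the direction of the isomorphism — you build $\psi\colon\mathcal I_r(R)\to\mathcal F(\mathcal P)\times T$ via localizations and DVR-exponents, whereas the paper defines the map $\mathcal F(\mathcal P)\times T\to\mathcal I_r(R)$ by $r$-multiplication and checks bijectivity using the $r$-invertibility of the $\mathfrak p\in\mathcal P$.
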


\begin{proof}
We start with the following assertion.
\begin{enumerate}
\item[{\bf A.}\,] For every $\mathfrak p\in\mathcal{P}$, $\mathfrak p$ is $r$-invertible and $\{I\in\mathcal{A}(\mathcal{I}_r(R))\mid\sqrt{I}=\mathfrak p\}=\{\mathfrak p\}$.
\end{enumerate}

{\it Proof of} \ {\bf A.} Let $\mathfrak p\in\mathcal{P}$. It follows from \cite[Theorem 2.10.9]{Ge-HK06a} that $R_\mathfrak p$ is a discrete valuation domain. This implies that $\mathfrak p_\mathfrak q$ is a principal ideal of $R_\mathfrak q$ for every $\mathfrak q\in r$-$\max(R)$, and thus $\mathfrak p$ is an $r$-invertible $r$-ideal of $R$ by \cite[12.3 Theorem]{HK98} (since $R$ is $r$-Noetherian). Clearly, $\mathfrak p\in\{I\in\mathcal{A}(\mathcal{I}_r(R))\mid\sqrt{I}=\mathfrak p\}$. Let $I\in\mathcal{A}(\mathcal{I}_r(R))$ be such that $\sqrt{I}=\mathfrak p$. It is sufficient to show that $I=(\mathfrak p^k)_r$ for some $k\in\mathbb{N}$ (then $I=\mathfrak p$, since $I\in\mathcal{A}(\mathcal{I}_r(R))$). There is some $\ell\in\mathbb{N}$ such that $(\mathfrak p^{\ell})_r\subset I$ (note that $\mathfrak p=J_r$ for some finite $J\subset\mathfrak p$). Since $\mathfrak p$ is $r$-invertible there is some greatest $k\in\mathbb{N}$ such that $I\subset (\mathfrak p^k)_r$ (otherwise we have $(\mathfrak p^{\ell})_r\subset I\subset (\mathfrak p^{\ell+1})_r$, which contradicts the fact that $\mathfrak p$ is a proper $r$-invertible $r$-ideal). Suppose that $I\subsetneq (\mathfrak p^k)_r$. Then $(I\mathfrak p^{-k})_r\subsetneq R$, and thus there is some $\mathfrak q\in\mathfrak{X}(R)$ such that $(I\mathfrak p^{-k})_r\subset\mathfrak q$. We have $I\subset (\mathfrak p^k\mathfrak q)_r\subset\mathfrak q$, hence $\mathfrak p=\mathfrak q$ and $I\subset (\mathfrak p^{k+1})_r$, a contradiction. \qed ({\bf A})

\smallskip
1. By definition, $T$ is a semigroup with identity $R$ and since $\mathcal{I}_r(R)$ is a reduced monoid by Proposition \ref{4.7}, it follows that $T$ is unit-cancellative, and thus $T$ is a monoid.

Let $f:\mathcal{F}(\mathcal{P})\times T\rightarrow\mathcal{I}_r(R)$ be defined by $f((\prod_{\mathfrak p\in\mathcal{P}}\mathfrak p^{\alpha_\mathfrak p},A))=(\prod_{\mathfrak p\in\mathcal{P}}\mathfrak p^{\alpha_\mathfrak p}A)_r$ where $(\alpha_\mathfrak p)_{\mathfrak p\in\mathcal{P}}$ is a formally infinite sequence of nonnegative integers. Clearly, $f$ is a well-defined monoid homomorphism. It remains to show that $f$ is bijective.

First we show that $f$ is injective. Let $(\alpha_\mathfrak p)_{\mathfrak p\in\mathcal{P}}$ and $(\beta_\mathfrak p)_{\mathfrak p\in\mathcal{P}}$ be formally infinite sequences of nonnegative integers and $A,B\in T$ be such that $(\prod_{\mathfrak p\in\mathcal{P}}\mathfrak p^{\alpha_\mathfrak p}A)_r=(\prod_{\mathfrak p\in\mathcal{P}}\mathfrak p^{\beta_\mathfrak p}B)_r$. Since every $\mathfrak q\in\mathcal{P}$ is $r$-invertible it is sufficient to show that $\alpha_\mathfrak q=\beta_\mathfrak q$ for all $\mathfrak q\in\mathcal{P}$ (then it clearly follows that $A=B$). Let $\mathfrak q\in\mathcal{P}$. Suppose that $\alpha_\mathfrak q\not=\beta_\mathfrak q$. Without restriction let $\alpha_\mathfrak q<\beta_\mathfrak q$. Since $\mathfrak q$ is $r$-invertible we have $(\prod_{\mathfrak p\in\mathcal{P},\mathfrak p\not=\mathfrak q}\mathfrak p^{\alpha_\mathfrak p}A)_r=(\prod_{\mathfrak p\in\mathcal{P},\mathfrak p\not=\mathfrak q}\mathfrak p^{\beta_\mathfrak p}\mathfrak q^{\beta_\mathfrak q-\alpha_\mathfrak q}B)_r\subset\mathfrak q$. Consequently, $A\subset\mathfrak q$. Since $A$ is a finite $r$-product of elements of $\{I\in\mathcal{A}(\mathcal{I}_r(R))\mid\sqrt{I}\in\mathcal{P}^*\}$, there is some $B\in\mathcal{A}(\mathcal{I}_r(R))$ such that $\sqrt{B}\in\mathcal{P}^*$ and $B\subset\mathfrak q$. We infer that $\mathfrak q=\sqrt{B}\in\mathcal{P}^*$, a contradiction.

Next we show that $f$ is surjective. Let $I\in\mathcal{I}_r(R)$. Since $\mathcal{I}_r(R)$ is atomic (by Proposition \ref{4.7}), we have $I=(\prod_{i=1}^n I_i)_r$ for some $n\in\mathbb{N}_0$ and some $I_i\in\mathcal{A}(\mathcal{I}_r(R))$. Set $A=(\prod_{i\in [1,n],\sqrt{I_i}\in\mathcal{P}} I_i)_r$ and $B=(\prod_{i\in [1,n],\sqrt{I_i}\in\mathcal{P}^*} I_i)_r$. Then $B\in T$. It follows by Proposition \ref{4.7} that $I=(AB)_r$. It follows by ${\bf A}$ that $A=(\prod_{\mathfrak p\in\mathcal{P}}\mathfrak p^{\alpha_\mathfrak p})_r$ for some formally infinite sequence of nonnegative integers $(\alpha_\mathfrak p)_{\mathfrak p\in\mathcal{P}}$. Consequently, $f((\prod_{\mathfrak p\in\mathcal{P}}\mathfrak p^{\alpha_\mathfrak p},B))=I$.

2. It follows by 1 that $\mathcal{A}(\mathcal{I}_r(R))\cap T=\mathcal{A}(T)$ and $\{I\in\mathcal{A}(\mathcal{I}_r(R))\mid\sqrt{I}\in\mathcal{P}^*\}=\mathcal{A}(T)$. This immediately implies that $T$ is finitely generated if and only if $\{I\in\mathcal{A}(\mathcal{I}_r(R))\mid\sqrt{I}\in\mathcal{P}^*\}$ is finite.
\end{proof}

If $R$ is an order in a quadratic number field and the $r$-ideals are the usual ring ideals, then there is a very explicit number theoretic characterization of when the set $\{I\in\mathcal{A}(\mathcal{I}_{ }(R))\mid\sqrt{I}\in\mathcal{P}^*\}$ is finite (\cite[Corollary 3.8]{F-G-K-T17}). Here we continue with a discussion of generalized Cohen-Kaplansky domains which provide a further setting where the set $\{I\in\mathcal{A}(\mathcal{I}_{ }(R))\mid\sqrt{I}\in\mathcal{P}^*\}$ is finite (see Theorem \ref{4.13}).

A domain is called a {\it generalized Cohen-Kaplansky domain} if it is atomic and has only finitely many atoms up to associates that are not prime elements. Generalized Cohen-Kaplansky domains were introduced by D.D. Anderson, D.F. Anderson, and M. Zafrullah who proved the following characterization (\cite[Corollary 5 and Theorem 6]{An-An-Za92b}).

\begin{lemma}\label{4.9}
For an integral domain $R$ the following statements are equivalent{\rm \,:}
\begin{itemize}
\item[(a)] $R$ is a generalized Cohen-Kaplansky domain.

\smallskip
\item[(b)] $\overline{R}$ is factorial, $R\subset\overline{R}$ is a root extension (i.e., for every $x\in\overline{R}$ there is some $k\in\mathbb{N}$ such that $x^k\in R$), $(R:\overline{R})$ is a principal ideal of $\overline{R}$, and $\overline{R}/(R:\overline{R})$ is finite.
\end{itemize}
If these conditions hold, then $R$ is weakly factorial whence in particular a weakly Krull domain.
\end{lemma}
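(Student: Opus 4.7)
The plan is to establish the two implications separately, in each case exploiting the finitely many non-prime atoms $u_1,\ldots,u_n$ of $R$ (taken up to associates) and the element $c = u_1\cdots u_n$, which should be shown to generate the conductor up to powers.

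For (a)$\Rightarrow$(b), I would first analyse the prime spectrum of $R$. Any prime element of $R$ is automatically an atom, so any atom not associated to some $u_i$ is already a prime element, and hence any height-$1$ prime $\mathfrak p$ of $R$ with $c \notin \mathfrak p$ must be principal and generated by a prime element; in particular $R_\mathfrak p$ is a DVR. The set $\{\mathfrak p \in \mathfrak X(R) \mid c \in \mathfrak p\}$ is finite because it is contained in the finite union of the sets of minimal primes of $u_i R$. This gives $R$ a weakly Krull structure and forces $\overline R = R[1/c] \cap \bigcap_{\mathfrak p \supset c R} \overline{R_\mathfrak p}$, where $R[1/c]$ is factorial (being a Krull localization whose height-$1$ primes are all principal generated by primes of $R$). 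The finitely many "bad" local pieces $\overline{R_\mathfrak p}$ must each be semilocal principal ideal domains (since they are one-dimensional and each $R_\mathfrak p$ is Mori with nonzero conductor), and gluing them to $R[1/c]$ yields that $\overline R$ is factorial. The conductor $(R:\overline R)$ sits inside every $\mathfrak p$ containing $c$ and is divisible only by primes of $\overline R$ lying over these finitely many $\mathfrak p$, so a power of $c$ lies in $(R:\overline R)$ and a careful analysis of the finitely many $\overline R$-primes above each $\mathfrak p$ produces a generator of $(R:\overline R)$. For the root-extension property, use that if $x\in\overline R$, then $x$ is (up to a unit) a product of primes of $\overline R$ over the finite set of "bad" primes; since $\overline R/(R:\overline R)$ will be shown finite, its multiplicative semigroup of nonzero classes is a finite semigroup, so some power of any such $x$ is forced into $R$. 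Finiteness of $\overline R/(R:\overline R)$ in turn follows from the fact that $\overline R /(R : \overline R)$ is a zero-dimensional Noetherian ring (finite direct product of Artinian local rings) and a counting/orbit argument using the finitely many atoms of $R$ representing the residue classes.

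For (b)$\Rightarrow$(a), factor the generator of $(R:\overline R)$ into primes of $\overline R$; only finitely many primes $q_1,\ldots,q_s$ of $\overline R$ occur. Any prime $q$ of $\overline R$ with $q \notin \{q_1,\ldots,q_s\}$ has a power $q^k \in R$ by the root property; minimality of the exponent shows that an appropriate associate of $q^k$ is an atom of $R$, and a standard primality check (using that outside the conductor $R$ and $\overline R$ agree locally) shows it is a prime element of $R$. Any non-prime atom of $R$ must therefore involve the finitely many primes $q_1,\ldots,q_s$. Writing such an atom as $u \cdot q_1^{a_1}\cdots q_s^{a_s}$ with $u \in \overline R^\times$ and using finiteness of $\overline R/(R:\overline R)$ to bound the residues, one obtains only finitely many possible associate classes, proving that $R$ is atomic with finitely many non-prime atoms.

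The final clause, that these conditions imply weak factoriality, is immediate from the description: every non-unit of $R$ factors into primes (which are primary) and the finitely many non-prime atoms (which by the structural analysis lie in uniquely determined minimal primes and are themselves primary). I expect the most delicate step to be the forward direction's combined verification that the conductor is principal in $\overline R$ together with finiteness of $\overline R / (R:\overline R)$; both features must be extracted from the single hypothesis that $R$ has only finitely many non-prime atoms, and the interaction between the local DVR structure away from $c$ and the residue counting at the finitely many primes containing $c$ is where the argument is most technical.
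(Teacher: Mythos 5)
The paper does not prove Lemma~\ref{4.9}; it is quoted directly from D.D.~Anderson, D.F.~Anderson, and Zafrullah (the cited Corollary~5 and Theorem~6 of \cite{An-An-Za92b}), so there is no internal proof in the text to compare against. Your sketch tries to reconstruct that external argument from scratch, and its broad strategy --- passing to the product $c$ of the non-prime atoms, noting DVR behaviour at the height-one primes not containing $c$, and pushing the bad primes into a Cohen--Kaplansky-type local analysis --- is in the right spirit, as is the use of root extension together with finiteness of $\overline R/(R\DP\overline R)$ in the converse direction.

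There are, however, several places where the weight-bearing steps are asserted rather than established. In (a)$\Rightarrow$(b), the claim that $\{\mathfrak p\in\mathfrak X(R)\mid c\in\mathfrak p\}$ is finite ``because it is contained in the finite union of minimal primes of $u_iR$'' presupposes that each $u_iR$ has only finitely many minimal primes; that fails in a general atomic domain, and nothing in the GCK hypothesis has yet been used to justify it. The subsequent assertions that $R_\mathfrak p$ is ``Mori with nonzero conductor'' at the bad primes and that $\overline{R_\mathfrak p}$ is a semilocal PID are themselves non-trivial consequences of the GCK hypothesis (essentially the content of the Anderson--Mott characterization of Cohen--Kaplansky domains), not facts one can read off immediately. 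The two hardest pieces --- that $(R\DP\overline R)$ is principal in $\overline R$ and that $\overline R/(R\DP\overline R)$ is finite --- are explicitly deferred to ``a careful analysis'' and ``a counting/orbit argument'' without being supplied, and you yourself flag at the end that this is where the real difficulty lies. Also, the statement that $R[1/c]$ is ``a Krull localization'' is not automatic: localizing an atomic domain need not give an atomic, let alone Krull, domain without further input. In (b)$\Rightarrow$(a) the sketch silently assumes that $R$ is atomic, but atomicity must itself be deduced from condition (b) (for instance by first showing that $\overline R$ is a finitely generated $R$-module, hence $R$ is Mori); as written, that step is missing. So the proposal names the right ingredients but remains a plan rather than a proof of this result.
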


\begin{lemma}\label{4.10}
Let $R$ be an archimedean domain such that $\widehat{R}$ is factorial, and $\widehat{R}/(R:\widehat{R})$ is finite. Then $R$ is a $\C$-domain.
\end{lemma}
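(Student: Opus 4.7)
The plan is to verify directly that $R^{\bullet}$ is a \C-monoid inside the factorial monoid $\widehat{R}^{\bullet}$, which by definition amounts to checking three conditions: that $R^{\bullet}$ is a submonoid of $\widehat{R}^{\bullet}$, that $R^{\bullet} \cap (\widehat{R}^{\bullet})^{\times} = (R^{\bullet})^{\times}$, and that the reduced class semigroup $\mathcal{C}^*(R^{\bullet}, \widehat{R}^{\bullet})$ is finite. The first condition is immediate. For the second, if $x \in R^{\bullet}$ satisfies $x^{-1} \in \widehat{R}$, then by the definition of the complete integral closure there is some $c \in R^{\bullet}$ with $cx^{-n} \in R$ for every $n \in \N$, so $c \in \cap_{n \in \N} x^n R^{\bullet}$, and the archimedean hypothesis forces $x \in (R^{\bullet})^{\times}$.

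The core task is to prove finiteness of $\mathcal{C}^*(R^{\bullet}, \widehat{R}^{\bullet})$. Setting $\mathfrak{f} = (R \DP \widehat{R})$, we observe that $\widehat{R}/\mathfrak{f}$ is a finite Artinian ring, so only finitely many prime ideals of $\widehat{R}$ contain $\mathfrak{f}$. Fix a complete set $P$ of prime representatives of the factorial ring $\widehat{R}$, let $P^* = \{p \in P \mid \mathfrak{f} \subset p\widehat{R}\}$ (a finite set), and choose $N \in \N$ such that $p^N \widehat{R} \subset \mathfrak{f}$ for every $p \in P^*$, which is possible since $\sqrt{\mathfrak{f}}/\mathfrak{f}$ is nilpotent in the finite ring $\widehat{R}/\mathfrak{f}$. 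For each $y \in \widehat{R}^{\bullet}$ we decompose $y = y^* y'$ with $y^* = \prod_{p \in P^*} p^{v_p(y)}$ and $y'$ coprime to every prime in $P^*$, then attach to $y$ a finite invariant: the truncated exponent vector $(\min\{v_p(y), N\})_{p \in P^*}$ together with the residue class of $y'$ in $\widehat{R}/\mathfrak{f}$, considered up to multiplication by units of $\widehat{R}$ congruent to $1$ modulo $\mathfrak{f}$.

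The main obstacle, and the arithmetic heart of the argument, is to show that two elements sharing this invariant lie in the same $R^{\bullet}$-equivalence class, i.e., $xy \in R$ if and only if $xz \in R$ for every $x \in \widehat{R}^{\bullet}$. The key ingredients are the containment $1 + \mathfrak{f} \subset R$, which lets us transfer $R$-membership across elements differing by a unit congruent to $1$ modulo $\mathfrak{f}$, and the fact that excess $P^*$-valuations beyond $N$ automatically land inside $\mathfrak{f} \subset R$, so no new information is gained from large valuations. Once this is established, $\mathcal{C}^*(R^{\bullet}, \widehat{R}^{\bullet})$ injects into a finite set and is therefore finite, completing the verification that $R^{\bullet}$ is a \C-monoid and hence that $R$ is a \C-domain.
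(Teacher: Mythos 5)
Your overall framework is the right shape: verify that $R^{\bullet}$ is a submonoid of the factorial monoid $\widehat R^{\bullet}$, that $R\cap\widehat R^{\times}=R^{\times}$, and that $\mathcal C^*(R^{\bullet},\widehat R^{\bullet})$ is finite. The first check is immediate, and your use of the archimedean hypothesis together with the definition of $\widehat R$ for the second is correct. The problem lies in the third check. The assertion that one may choose $N$ with $p^{N}\widehat R\subset\mathfrak f$ for every $p\in P^*$ is false whenever $P^*$ contains two non-associate primes $p$ and $q$: it would force $p^{N}\in\mathfrak f\subset q\widehat R$, hence $q\mid p^{N}$, which is impossible. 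The appeal to nilpotence of $\sqrt{\mathfrak f}/\mathfrak f$ would require $p\in\sqrt{\mathfrak f}$, i.e.\ $p$ lying in \emph{every} prime of $\widehat R$ containing $\mathfrak f$, whereas $p\in P^*$ only guarantees $\mathfrak f\subset p\widehat R$. The error is not merely in the justification: the proposed invariant genuinely fails to separate $R^{\bullet}$-equivalence classes. Take $R=\Z[6i]$ and $\widehat R=\Z[i]$, so that $\mathfrak f=(R:\widehat R)=6\Z[i]$ and $P^*$ contains both $1+i$ and $3$; then $y=(1+i)^{8}=16$ and $z=(1+i)^{9}=16+16i$ satisfy $y'=z'=1$ and have identical truncated exponent vectors for any $N\le 8$, yet $y\in R$ while $z\notin R$, so they lie in different classes.

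The paper disposes of the finiteness in one line and from a different angle: for each $x\in\widehat R^{\bullet}$ the set $x^{-1}R\cap\widehat R$ is an $R$-submodule of $\widehat R$ containing $\mathfrak f$, and finiteness of $\widehat R/\mathfrak f$ bounds the number of such submodules; finiteness of $\mathcal C(R^{\bullet},\widehat R^{\bullet})$ then follows from \cite[Lemma 2.8.2.2]{Ge-HK06a}, and $\widehat R^{\times}\cap R=R^{\times}$ finishes the proof. Your argument can be repaired in the same spirit by discarding the $P^*$-decomposition altogether and using the single invariant $y\bmod\mathfrak f$: if $y\equiv z\pmod{\mathfrak f}$, then for every $x\in\widehat R^{\bullet}$ one has $xy-xz=x(y-z)\in\widehat R\,\mathfrak f\subset\mathfrak f\subset R$, so $xy\in R$ if and only if $xz\in R$; thus the finitely many residues modulo $\mathfrak f$ already bound $|\mathcal C(R^{\bullet},\widehat R^{\bullet})|$, which is exactly what you were after without any valuation-theoretic case analysis.
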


\begin{proof} Observe that $\{x^{-1}R\cap\widehat{R}\mid x\in\widehat{R}^{\bullet}\}\subset\{I\mid I$ is an $R$-submodule of $\widehat{R}$ with $(R:\widehat{R})\subset I\}$. Therefore, $\{x^{-1}R^{\bullet}\cap\widehat{R}^{\bullet}\mid x\in\widehat{R}^{\bullet}\}$ is finite (since $\widehat{R}/(R:\widehat{R})$ is finite), and thus $\mathcal{C}(R^{\bullet},\widehat{R}^{\bullet})$ is finite by \cite[Lemma 2.8.2.2]{Ge-HK06a}. Since $\widehat{R}^{\times}\cap R=R^{\times}$, we infer that $R^{\bullet}$ is a $\C$-monoid.
\end{proof}

\begin{proposition}\label{4.11}
Let $R$ be a generalized Cohen-Kaplansky domain and $r$ a finitary ideal system on $R$.
\begin{enumerate}
\item $R$ is a $\C$-domain and a Mori domain, and $v$-$\max(R)=\mathfrak{X}(R)$.

\smallskip
\item If $T\subset R^{\bullet}$ a multiplicatively closed subset, then $T^{-1}R$ is a generalized Cohen-Kaplansky domain.

\smallskip
\item $R$ is a Cohen-Kaplansky domain if and only if $R$ is a $\G$-domain. In particular, if $\mathfrak p\in\mathfrak{X}(R)$, then $R_\mathfrak p$ is a Cohen-Kaplansky domain.

\smallskip
\item If $r$-$\max(R)=\mathfrak{X}(R)$, then $R$ is $r$-Noetherian.
\end{enumerate}
\end{proposition}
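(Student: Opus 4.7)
\medskip
\noindent\textbf{Proof proposal.} For Part 1, my plan is to apply Lemma \ref{4.10}. First I would show $\widehat{R} = \overline{R}$: since $\overline{R}$ is factorial it is completely integrally closed, so any element of $\mathsf{q}(R)$ almost integral over $R$ is almost integral over $\overline{R}$, hence lies in $\overline{R}$. Archimedeanness of $R$ follows from unique factorization in $\overline{R}$ combined with the root extension: if $x \in \bigcap_n a^n R$ for some non-unit $a \in R^{\bullet}$, then the $\pi$-adic valuation of $x$ in $\overline{R}$, for any prime divisor $\pi$ of $a$, is unbounded, forcing $x = 0$. Lemma \ref{4.10} then yields that $R$ is a $\C$-domain, and the general theory of $\C$-monoids recalled in Section \ref{2} gives the Mori property together with nonzero conductor. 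For $v$-$\max(R) = \mathfrak{X}(R)$, I would invoke that $R$ is weakly factorial (Lemma \ref{4.9}), hence weakly Krull, and use the standard fact that in a Mori weakly Krull domain the $v$-maximal ideals coincide with the height-one primes.

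For Part 2, the plan is to verify the four conditions of Lemma \ref{4.9}(b) for $T^{-1}R$. Since $(R:\overline{R}) \subset R$ and $\overline{R}/(R:\overline{R})$ is finite, $\overline{R}$ is a finitely generated $R$-module; consequently $\overline{T^{-1}R} = T^{-1}\overline{R}$ is factorial (localization of a UFD), $T^{-1}R \subset T^{-1}\overline{R}$ is a root extension (immediate from the root property of $R \subset \overline{R}$), the conductor commutes with this localization so that $(T^{-1}R : T^{-1}\overline{R}) = T^{-1}(R:\overline{R})$ is principal in $T^{-1}\overline{R}$, and the quotient $T^{-1}\overline{R}/T^{-1}(R:\overline{R}) \cong T^{-1}(\overline{R}/(R:\overline{R}))$ is the localization of a finite ring, hence finite.

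For Part 3, I plan to exploit the weakly factorial structure: each non-unit of $R$ decomposes uniquely as a product of primary atoms indexed by $\mathfrak{X}(R)$, so $\mathcal{A}(R)$ partitions into classes indexed by $\mathfrak{X}(R)$. Each class is non-empty (for $\mathfrak{p} \in \mathfrak{X}(R)$, any primary factor of a nonzero $x \in \mathfrak{p}$ whose radical lies in $\mathfrak{p}$ must be $\mathfrak{p}$-primary by minimality of $\mathfrak{p}$) and contains at most one prime atom (a generator of $\mathfrak{p}$, when $\mathfrak{p}$ is principal) together with some of the globally finite set of non-prime atoms. Hence $R$ is Cohen-Kaplansky iff $\mathfrak{X}(R)$ is finite. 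Separately, $R$ is a $\G$-domain iff $\mathfrak{X}(R)$ is finite: the forward direction uses that a nonzero element lying in every nonzero prime can lie in only finitely many elements of $\mathfrak{X}(R)$ by finite character of the weakly Krull representation; the reverse takes the product of one nonzero element from each $\mathfrak{p} \in \mathfrak{X}(R)$ and observes that every nonzero prime contains some $\mathfrak{p} \in \mathfrak{X}(R)$. The ``in particular'' clause then follows from Part 2 together with the fact that a one-dimensional local domain is trivially a $\G$-domain.

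For Part 4, the strategy is to reduce locally to the Cohen-Kaplansky case. By the ``in particular'' of Part 3, each $R_{\mathfrak{p}}$ for $\mathfrak{p} \in r$-$\max(R) = \mathfrak{X}(R)$ is Cohen-Kaplansky, and by Eakin-Nagata applied to the finite $R_{\mathfrak{p}}$-module $\overline{R_{\mathfrak{p}}}$ (a semilocal PID) the ring $R_{\mathfrak{p}}$ is Noetherian, and in particular $r_{\mathfrak{p}}$-Noetherian. Given a strictly ascending chain $I_1 \subsetneq I_2 \subsetneq \ldots$ of $r$-ideals, only finitely many $\mathfrak{p} \in \mathfrak{X}(R)$ contain $I_1$ by finite character, and the chain $I_j R_{\mathfrak{p}}$ stabilizes at each such $\mathfrak{p}$ while being constantly $R_{\mathfrak{p}}$ at all other primes. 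The hard part will be recovering stabilization of the $I_j$ themselves from the stabilization of their localizations; this hinges on the representation $I = \bigcap_{\mathfrak{p} \supset I}(IR_{\mathfrak{p}} \cap R)$ for an $r$-ideal $I$, which holds by the same argument used in the proof of Proposition \ref{4.7} and in fact requires only that $r$ is finitary, $R$ is weakly Krull, and $r$-$\max(R) = \mathfrak{X}(R)$, so the $r$-Noetherian hypothesis is not needed to derive it.
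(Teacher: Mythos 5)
Your proposal is correct, and Parts 1, 2, and 4 follow essentially the paper's route with small variations. In Part~1 you supply an explicit archimedeanness argument before invoking Lemma~\ref{4.10}, which the paper leaves implicit (the hypothesis is indeed needed); in Part~2 you justify that the conductor commutes with localization by noting that $\overline R$ is a finite $R$-module (a consequence of $\overline R / (R \colon \overline R)$ being finite and $(R\colon\overline R)\subset R$), whereas the paper cites the Mori-theoretic fact that $\overline R = \widehat R$ is a fractional divisorial ideal together with \cite[Proposition 2.2.8.1]{Ge-HK06a}; in Part~4 you invoke Eakin--Nagata to obtain Noetherianity of $R_{\mathfrak p}$, whereas the paper cites \cite[Theorem 4.3]{An-Mo92} directly for the fact that Cohen--Kaplansky domains are Noetherian. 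These are cosmetic differences.

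Part~3, however, is a genuinely different argument. The paper proves CK $\Rightarrow$ G-domain via \cite[Theorem 4.3]{An-Mo92} (CK domains are semilocal of dimension at most one), and proves G-domain $\Rightarrow$ CK by using Part~1: a Mori G-domain has finite spectrum, hence only finitely many prime elements up to associates, which together with the generalized CK hypothesis yields finitely many atoms. You instead work directly with the weakly factorial structure from Lemma~\ref{4.9}: you show that each of ``CK'' and ``G-domain'' is equivalent to finiteness of $\mathfrak X(R)$ --- the former by partitioning $\mathcal A(R)$ according to the (necessarily unique, height-one) radical of each atom and observing that each class is nonempty and contains at most one prime atom plus some of the globally finite supply of non-prime atoms, the latter via the finite-character property of the weakly Krull representation (and the fact, valid in a weakly factorial domain, that every nonzero prime contains the radical of a primary factor of any of its nonzero elements, hence contains some $\mathfrak p \in \mathfrak X(R)$). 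Your route is more self-contained, avoiding both the citation to \cite{An-Mo92} in this part and the fact that a Mori G-domain has finite spectrum, and it makes transparent that weak factoriality is the structural input driving the equivalence. Both arguments are valid; the paper's is shorter once one grants the cited theorems, yours exposes more of the mechanism.
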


\begin{proof}
1. Clearly, $\overline{R}$ is factorial and $\overline{R}/(R:\overline{R})$ is finite by Lemma \ref{4.9}. Consequently, $\overline{R}=\widehat{R}$, $(R:\widehat{R})\not=\{0\}$, and $\widehat{R}^{\times}\cap R=\overline{R}^{\times}\cap R=R^{\times}$. We infer by Lemma \ref{4.10} that $R$ is a $\C$-domain, and hence $R$ is a Mori domain by \cite[Theorem 2.9.13]{Ge-HK06a}. It follows from Lemma \ref{4.9} that $R$ is a weakly Krull domain. Since $R$ is a Mori domain, we infer that $v$-$\max(R)=\mathfrak{X}(R)$ (note that $v$-$\max(R)=\emptyset=\mathfrak{X}(R)$ if $R$ is a field).

2. By Lemma \ref{4.9}, we know that $\overline{R}$ is factorial, $R\subset\overline{R}$ is a root extension, $(R:\overline{R})$ is a principal ideal of $\overline{R}$ and $\overline{R}/(R:\overline{R})$ is finite. Again by Lemma \ref{4.9}, it is sufficient to show that $\overline{T^{-1}R}$ is factorial, $T^{-1}R\subset\overline{T^{-1}R}$ is a root extension, $(T^{-1}R:\overline{T^{-1}R})$ is a principal ideal of $\overline{T^{-1}R}$ and $\overline{T^{-1}R}/(T^{-1}R:\overline{T^{-1}R})$ is finite.

It is clear that $\overline{T^{-1}R}=T^{-1}\overline{R}$ is factorial. Let $x\in\overline{T^{-1}R}=T^{-1}\overline{R}$. Then $tx\in\overline{R}$ for some $t\in T$, and hence $(tx)^k\in R$ for some $k\in\mathbb{N}$. This implies that $x^k\in T^{-1}R$. Therefore, $T^{-1}R\subset\overline{T^{-1}R}$ is a root extension. We have $R$ is a Mori domain by 1, $(R:\overline{R})\not=\{0\}$ and $\overline{R}=\widehat{R}$ is a fractional divisorial ideal of $R$. Therefore, $T^{-1}(R:\overline{R})=(T^{-1}R:T^{-1}\overline{R})=(T^{-1}R:\overline{T^{-1}R})$ by \cite[Proposition 2.2.8.1]{Ge-HK06a}. Because of this it is clear that $(T^{-1}R:\overline{T^{-1}R})$ is a principal ideal of $\overline{T^{-1}R}$.

Finally, we have $\overline{T^{-1}R}/(T^{-1}R:\overline{T^{-1}R})=T^{-1}\overline{R}/T^{-1}(R:\overline{R})\cong (T+(R:\overline{R})/(R:\overline{R}))^{-1}(\overline{R}/(R:\overline{R}))$ is finite (since both $T+(R:\overline{R})/(R:\overline{R})$ and $\overline{R}/(R:\overline{R})$ are finite).

3. If $R$ is a Cohen-Kaplansky domain, then $R$ is semilocal and $\dim(R)\leq 1$ by \cite[Theorem 4.3]{An-Mo92}, and thus $R$ is a $\G$-domain. Now let $R$ be a $\G$-domain. Since $R$ is also a Mori domain by 1, $R$ has only finitely many prime ideals. Therefore, $R$ has only finitely many prime elements up to associates, hence $R$ has only finitely many atoms up to associates. We infer that $R$ is a Cohen-Kaplansky domain.

Now let $\mathfrak p\in\mathfrak{X}(R)$. Then $R_\mathfrak p$ is a $\G$-domain and it is a generalized Cohen-Kaplansky domain by 2. Therefore, $R_\mathfrak p$ is a Cohen-Kaplansky domain.

4. Let $r$-$\max(R)=\mathfrak{X}(R)$. If $\mathfrak m\in\mathfrak{X}(R)$, then it follows by 3 that $R_\mathfrak m$ is a Cohen-Kaplansky domain, hence $R_\mathfrak m$ is Noetherian by \cite[Theorem 4.3]{An-Mo92}. Let $(I_i)_{i\in\mathbb{N}}$ be an ascending chain of $r$-ideals of $R$. Without restriction let $I_1\not=\{0\}$. Set $\mathcal{Q}=\{\mathfrak m\in\mathfrak{X}(R)\mid I_1\subset\mathfrak m\}$. Then $\mathcal{Q}$ is finite, and for all $\mathfrak m\in\mathfrak{X}(R)$ with $I_1\not\subset\mathfrak m$ and all $\ell\in\mathbb{N}$ we have $(I_{\ell})_\mathfrak m=(I_1)_\mathfrak m$. Clearly, there is some $(n_\mathfrak m)_{\mathfrak m\in\mathcal{Q}}\in\mathbb{N}^\mathcal Q$ such that for all $m\in\mathcal{Q}$ and $\ell\in\mathbb{N}_{\ge n_\mathfrak m}$ it follows that $(I_{\ell})_\mathfrak m=(I_{n_\mathfrak m})_\mathfrak m$. Set $n=\max(\{n_\mathfrak m\mid\mathfrak m\in\mathcal{Q}\}\cup\{1\})$. Obviously, $(I_{\ell})_\mathfrak m=(I_n)_\mathfrak m$ for all $\mathfrak m\in\mathfrak{X}(R)$ and $\ell\in\mathbb{N}_{\ge n}$. We infer that $I_{\ell}=I_n$ for all $\ell\in\mathbb{N}_{\ge n}$.
\end{proof}

If $R$ is an integral domain, $r$ is an ideal system on $R$, and $\mathfrak m\in r$-$\max(R)$, then let $\mathfrak A_{r,\mathfrak m}(R)$ be the set of all $I\in\mathcal{I}_r(R)$ with $\sqrt{I}=\mathfrak m$ such that for all $J,L\in\mathcal{I}_r(R)$ with $I=(JL)_r$ it follows that $J=R$ or $L=R$. Note that if $\mathcal{I}_r(R)$ is unit-cancellative, then $\mathfrak A_{r,\mathfrak m}(R)$ is the set of atoms of $\mathcal{I}_r(R)$ whose radical is equal to $\mathfrak m$.

\begin{lemma}\label{4.12}
Let $R$ be a domain, $r$ a finitary ideal system on $R$, and $\mathfrak m\in r$-$\max(R)$. Let $\varphi\colon\mathfrak A_{r,\mathfrak m}(R)\rightarrow\mathfrak A_{r_\mathfrak m,\mathfrak m_\mathfrak m}(R_\mathfrak m)$ be defined by $\varphi(I)=I_\mathfrak m$ for all $I\in\mathfrak A_{r,\mathfrak m}(R)$ and let $\psi\colon\mathfrak A_{r_\mathfrak m,\mathfrak m_\mathfrak m}(R_\mathfrak m)\rightarrow\mathfrak A_{r,\mathfrak m}(R)$ be defined by $\psi(J)=J\cap R$ for all $J\in\mathfrak A_{r_\mathfrak m,\mathfrak m_\mathfrak m}(R_\mathfrak m)$. Then $\varphi$ and $\psi$ are mutually inverse bijections.
\end{lemma}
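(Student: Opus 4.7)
The proof breaks naturally into four tasks: well-definedness of $\varphi$ and of $\psi$, and the two identities $\psi\circ\varphi=\mathrm{id}$ and $\varphi\circ\psi=\mathrm{id}$. My plan is to first establish the following \emph{Key Lemma}: for every $r$-ideal $K$ of $R$ with $\sqrt{K}=\mathfrak m$ one has $K=K_\mathfrak m\cap R$. To prove it, I would fix $s\in R\setminus\mathfrak m$ and show $(K\cup sR)_r=R$: otherwise, since $r$ is finitary, $(K\cup sR)_r$ sits inside some $r$-maximal $r$-ideal $\mathfrak n$, which then contains $\sqrt K=\mathfrak m$, and the $r$-maximality of $\mathfrak m$ forces $\mathfrak n=\mathfrak m$, contradicting $s\in\mathfrak n\setminus\mathfrak m$. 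Given $x\in K_\mathfrak m\cap R$ with $sx\in K$ for some $s\notin\mathfrak m$, the axiom $cX_r=(cX)_r$ with $c=x$ yields
\[
xR \;=\; x\cdot(K\cup sR)_r \;=\; (x(K\cup sR))_r \;=\; (xK\cup(sx)R)_r \;\subset\; K,
\]
since $xK\subset K$ and $(sx)R\subset K$; hence $x\in K$.

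With the Key Lemma in hand, $\psi\circ\varphi=\mathrm{id}$ is immediate because every $I\in\mathfrak A_{r,\mathfrak m}(R)$ has $\sqrt{I}=\mathfrak m$. The identity $\varphi\circ\psi=\mathrm{id}$ reduces to the standard localization fact that $(J\cap R)R_\mathfrak m=J$ for any $R_\mathfrak m$-submodule $J\subset R_\mathfrak m$: for $a/t\in J$ with $a\in R$ and $t\in R\setminus\mathfrak m$, the element $t$ is a unit in $R_\mathfrak m$, so $a=t(a/t)\in J\cap R$, whence $a/t\in(J\cap R)R_\mathfrak m$.

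The real work is preservation of the atom-like property. For $\varphi$, suppose $I_\mathfrak m=(JL)_{r_\mathfrak m}$ with proper $r_\mathfrak m$-ideals $J,L$, and set $J'=J\cap R$, $L'=L\cap R$; these are $r$-ideals with $J'R_\mathfrak m=J$ and $L'R_\mathfrak m=L$. The crucial observation I would establish is that $\sqrt{(J'L')_r}=\mathfrak m$: in the local ring $R_\mathfrak m$, from $\sqrt J\cap\sqrt L=\sqrt{JL}=\sqrt{I_\mathfrak m}=\mathfrak m_\mathfrak m$ together with $\sqrt J,\sqrt L\subset\mathfrak m_\mathfrak m$ (since $J,L$ are proper in a local ring), one forces $\sqrt J=\sqrt L=\mathfrak m_\mathfrak m$; contracting to $R$ yields $\sqrt{J'}=\sqrt{L'}=\mathfrak m$; and since for finitary $r$ the minimal primes over an $r$-ideal are themselves $r$-ideals (so $\sqrt{X_r}=\sqrt X$), it follows that $\sqrt{(J'L')_r}=\sqrt{J'}\cap\sqrt{L'}=\mathfrak m$. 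The Key Lemma then applies to $(J'L')_r$: from $((J'L')_r)_\mathfrak m=(JL)_{r_\mathfrak m}=I_\mathfrak m$, contraction gives $(J'L')_r=I_\mathfrak m\cap R=I$, and the atom-like property of $I$ forces $J'=R$ or $L'=R$, equivalently $J=R_\mathfrak m$ or $L=R_\mathfrak m$.

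For $\psi$, suppose $J\cap R=(AB)_r$ with $A,B\in\mathcal I_r(R)$; then $J=(AR_\mathfrak m\cdot BR_\mathfrak m)_{r_\mathfrak m}$, and the atom-like property of $J$ lets me assume $AR_\mathfrak m=R_\mathfrak m$, i.e., $A\not\subset\mathfrak m$. If $A\ne R$ were proper, then $\sqrt A$ would be a proper $r$-ideal containing $\sqrt A\cap\sqrt B=\sqrt{AB}=\mathfrak m$, so the $r$-maximality of $\mathfrak m$ would force $\sqrt A=\mathfrak m$, contradicting $A\subset\sqrt A$ and $A\not\subset\mathfrak m$. The main obstacle in the whole argument is the atom-preservation under $\varphi$: it is precisely the identification $\sqrt{(J'L')_r}=\mathfrak m$, extracted from the local structure of $R_\mathfrak m$, that unlocks the Key Lemma and allows the atom-like property of $I$ to descend.
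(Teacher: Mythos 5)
Your proof is correct and follows essentially the same route as the paper's: reduce to the fact that $r$-ideals with radical $\mathfrak m$ contract faithfully from $R_\mathfrak m$ (your Key Lemma, which the paper asserts by noting such ideals are $\mathfrak m$-primary), then verify atom-preservation in each direction by contracting a putative factorization, identifying the relevant radicals with $\mathfrak m$, and invoking the Key Lemma. Your explicit derivation of the Key Lemma from the ideal-system axiom $cX_r=(cX)_r$ and the $r$-maximality of $\mathfrak m$ is a welcome fill-in of a step the paper leaves as ``straightforward to prove.''
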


\begin{proof}
Let $f\colon\{I\in\mathcal{I}_r(R)\mid\sqrt{I}=\mathfrak m\}\rightarrow\{J\in\mathcal{I}_{r_\mathfrak m}(R_\mathfrak m)\mid\sqrt{J}=\mathfrak m_\mathfrak m\}$ be defined by $f(I)=I_\mathfrak m$ for each $I\in\mathcal{I}_r(R)$ such that $\sqrt{I}=\mathfrak m$. Let $g\colon\{J\in\mathcal{I}_{r_\mathfrak m}(R_\mathfrak m)\mid\sqrt{J}=\mathfrak m_\mathfrak m\}\rightarrow\{I\in\mathcal{I}_r(R)\mid\sqrt{I}=\mathfrak m\}$ be defined by $f(J)=J\cap R$ for each $J\in\mathcal{I}_{r_\mathfrak m}(R_\mathfrak m)$ such that $\sqrt{J}=\mathfrak m_\mathfrak m$. It is straightforward to prove that $f$ and $g$ are mutually inverse bijections. For instance, note that if $I\in\mathcal{I}_r(R)$ with $\sqrt{I}=\mathfrak m$, then $I$ is $\mathfrak m$-primary, and hence $I_\mathfrak m\cap R=I$. To prove the assertion it is sufficient to show that $f(\mathfrak A_{r,\mathfrak m}(R))=\mathfrak A_{r_\mathfrak m,\mathfrak m_\mathfrak m}(R_\mathfrak m)$.

First let $I\in\mathfrak A_{r,\mathfrak m}(R)$ and $J^{\prime},L^{\prime}\in\mathcal{I}_{r_\mathfrak m}(R_\mathfrak m)$ be such that $I_\mathfrak m=(J^{\prime}L^{\prime})_{r_\mathfrak m}$. Assume to the contrary that $J^{\prime},L^{\prime}\not=R_\mathfrak m$. Then $\sqrt{J^{\prime}}=\sqrt{L^{\prime}}=\mathfrak m_\mathfrak m$. Set $J=J^{\prime}\cap R$ and $L=L^{\prime}\cap R$. Then $J,L\in\mathcal{I}_r(R)$ and $\sqrt{J}=\sqrt{L}=\mathfrak m$, and hence $\sqrt{(JL)_r}=\mathfrak m$. Observe that $I_\mathfrak m=(J^{\prime}L^{\prime})_{r_\mathfrak m}=(J_\mathfrak mL_\mathfrak m)_{r_\mathfrak m}=((JL)_r)_\mathfrak m$. We infer that $I=(JL)_r$, and thus $J=R$ or $L=R$. Therefore, $J^{\prime}=R_\mathfrak m$ or $L^{\prime}=R_\mathfrak m$, a contradiction.

Next let $J\in\mathfrak A_{r_\mathfrak m,\mathfrak m_\mathfrak m}(R_\mathfrak m)$ and set $I=J\cap R$. Let $A,B\in\mathcal{I}_r(R)$ be such that $I=(AB)_r$. Then $J=I_\mathfrak m=(A_\mathfrak mB_\mathfrak m)_{r_\mathfrak m}$, and hence $A_\mathfrak m=R_\mathfrak m$ or $B_\mathfrak m=R_\mathfrak m$. It follows that $A=R$ or $B=R$ (since $\sqrt{A},\sqrt{B}\supset\mathfrak m$).
\end{proof}

\medskip
\begin{theorem}\label{4.13}
Let $R$ be a generalized Cohen-Kaplansky domain, and $r$ an ideal system on $R$. Let $\mathcal{P}=\{\mathfrak p\in\mathfrak{X}(R)\mid\mathfrak p\not\supset (R:\widehat{R})\}$, $\mathcal{P}^*=\mathfrak{X}(R)\setminus\mathcal{P}$, and let $T$ be the subsemigroup of $\mathcal{I}_r(R)$ generated by $\{I\in\mathcal{A}(\mathcal{I}_r(R))\mid\sqrt{I}\in\mathcal{P}^*\}$. Suppose that one of the following conditions is satisfied{\rm \,:}
\begin{itemize}
\item[(a)] $r$ is finitary and $r$-$\max(R)=\mathfrak{X}(R)$.
\item[(b)] $r=v$.
\item[(c)] $R$ is a $\G$-domain.
\end{itemize}
Then $R$ is $r$-Noetherian, $\{I\in\mathcal{A}(\mathcal{I}_r(R))\mid\sqrt{I}\in\mathcal{P}^*\}$ is finite, $T$ is a finitely generated monoid, and $\mathcal{I}_r(R)\cong\mathcal{F}(\mathcal{P})\times T$.
\end{theorem}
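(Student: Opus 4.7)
The plan is to verify the hypotheses of Theorem 4.8 in each of the three cases and then appeal to its conclusions. In all cases, Proposition 4.11.1 shows that $R$ is a Mori weakly Krull domain with $\widehat R=\overline R$, and Lemma 4.9 ensures that the conductor $(R\DP\widehat R)$ is a nonzero principal ideal of $\overline R$; since $R$ is weakly Krull, the set $\mathcal P^*=\{\mathfrak p\in\mathfrak X(R)\mid\mathfrak p\supset(R\DP\widehat R)\}$ is finite.

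The first step is a case-by-case verification that $r$ is $r$-Noetherian and that $r$-$\max(R)=\mathfrak X(R)$. Case (a) is immediate, with $r$-Noetherianity following from Proposition 4.11.4. Case (b) follows from Proposition 4.11.1, which gives both that $R$ is $v$-Noetherian (as it is Mori) and $v$-$\max(R)=\mathfrak X(R)$. In case (c), Proposition 4.11.3 yields that $R$ is Cohen-Kaplansky, so $\mathcal I_r(R)$ is a finitely generated monoid by Proposition 4.5; in particular $R$ is $r$-Noetherian. For $r$-$\max(R)=\mathfrak X(R)$ in case (c), I would argue as follows. Since $R$ is Mori with $v$-$\max(R)=\mathfrak X(R)$, every $\mathfrak p\in\mathfrak X(R)$ is divisorial, and the universal inclusion $X_r\subset X_v$ forces $\mathfrak p\subset\mathfrak p_r\subset\mathfrak p_v=\mathfrak p$, so $\mathfrak p$ is an $r$-ideal. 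Because $\dim R\le 1$ (as $R$ is Cohen-Kaplansky) and $\mathcal I_r(R)\subset\mathcal I(R)$, the maximality of $\mathfrak p$ among ring ideals forces $\mathfrak p\in r$-$\max(R)$. Conversely, any $r$-max $r$-ideal is a proper ring ideal contained in some $\mathfrak q\in\max(R)=\mathfrak X(R)$; since $\mathfrak q$ is already an $r$-ideal, $r$-maximality forces equality.

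The second step is the finiteness of $\{I\in\mathcal A(\mathcal I_r(R))\mid\sqrt I\in\mathcal P^*\}$, which is the key input for Theorem 4.8(2). In case (c) this is automatic since $\mathcal I_r(R)$ itself is finitely generated. In cases (a) and (b), the system $r$ is finitary---in (b) because the $v$-system is finitary on Mori domains---so Lemma 4.12 bijects the atoms of $\mathcal I_r(R)$ with radical $\mathfrak p\in\mathcal P^*$ onto the atoms of $\mathcal I_{r_\mathfrak p}(R_\mathfrak p)$ with radical $\mathfrak p_\mathfrak p$. By Proposition 4.11.3 the localization $R_\mathfrak p$ is Cohen-Kaplansky, so $\mathcal I_{r_\mathfrak p}(R_\mathfrak p)$ has only finitely many atoms by Proposition 4.5. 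Since $\mathcal P^*$ is finite, the desired finiteness follows. Applying Theorem 4.8 now delivers the isomorphism $\mathcal I_r(R)\cong\mathcal F(\mathcal P)\times T$ together with the finite generation of $T$.

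The main obstacle I anticipate is the handling of case (c): establishing $r$-$\max(R)=\mathfrak X(R)$ for a general ideal system, without either the finitary hypothesis of (a) or the explicit identification $r=v$ of (b). The crux is to combine the divisoriality of the height-one primes in a Mori domain with the universal comparison $X_r\subset X_v$ and the one-dimensionality of Cohen-Kaplansky domains; once this is in hand, the remaining case (c) verifications and the Lemma 4.12 reduction in cases (a), (b) are routine.
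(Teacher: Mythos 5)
Your proposal follows the paper's overall strategy very closely: in all three cases you verify the hypotheses of Theorem~\ref{4.8} (that $R$ is $r$-Noetherian and $r$-$\max(R)=\mathfrak{X}(R)$), then localize at each $\mathfrak p\in\mathcal P^*$, invoke Proposition~\ref{4.11}.3 to see $R_{\mathfrak p}$ is Cohen-Kaplansky, Proposition~\ref{4.5} for finite generation of $\mathcal I_{r_{\mathfrak p}}(R_{\mathfrak p})$, and Lemma~\ref{4.12} to transfer finiteness of atoms back to $\mathcal I_r(R)$. Cases (a) and (b) are handled as in the paper; your organization of case (c) differs in that you verify the hypotheses of Theorem~\ref{4.8} directly, whereas the paper simply cites \cite[Theorem 4.3]{An-Mo92} to get that $R$ is one-dimensional Noetherian and then reduces to case (a). Your explicit argument for $r$-$\max(R)=\mathfrak X(R)$ in case (c) (combining divisoriality of height-one primes in Mori domains, $X_r\subset X_v$, one-dimensionality, and $\mathcal I_r(R)\subset\mathcal I(R)$) is correct and somewhat more self-contained than the paper's.

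There is, however, one unjustified step in case (c): you assert that $\mathcal I_r(R)$ being finitely generated (Proposition~\ref{4.5}) ``in particular'' gives that $R$ is $r$-Noetherian. This implication is not obvious. Finite generation of $\mathcal I_r(R)$ as a \emph{multiplicative} semigroup controls divisibility, not the inclusion order on ideals; the ascending chain condition on $r$-ideals does not follow from it without further argument. The clean route here — and the one the paper implicitly takes — is: $R$ Cohen-Kaplansky $\Rightarrow$ $R$ is one-dimensional semilocal Noetherian by \cite[Theorem 4.3]{An-Mo92} $\Rightarrow$ $R$ is Noetherian $\Rightarrow$ $R$ is $r$-Noetherian, since every $r$-ideal is a ring ideal. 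You already invoke $\dim R\le 1$ in the very next sentence, so you have all the ingredients at hand; the fix is just to draw the $r$-Noetherian conclusion from Noetherianity of $R$ rather than from finite generation of $\mathcal I_r(R)$.
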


\begin{proof}
CASE 1: \, $r$ is finitary and $r$-$\max(R)=\mathfrak{X}(R)$.

By Proposition \ref{4.11}.4 we have $R$ is $r$-Noetherian. Clearly, $(R:\widehat{R})\not=\{0\}$. Consequently, $T$ is a monoid and $\mathcal{I}_r(R)\cong\mathcal{F}(\mathcal{P})\times T$ by Theorem \ref{4.8}.1. Let $\mathfrak m\in\mathcal{P}^*$. Then $R_\mathfrak m$ is a Cohen-Kaplansky domain by Proposition \ref{4.11}.3. We infer by Proposition \ref{4.5} that $\mathcal{I}_{r_\mathfrak m}(R_\mathfrak m)$ is finitely generated, hence $\mathcal{A}(\mathcal{I}_{r_\mathfrak m}(R_\mathfrak m))$ is finite. It follows by Lemma \ref{4.12} that $\{I\in\mathcal{A}(\mathcal{I}_r(R))\mid\sqrt{I}=\mathfrak m\}$ is finite. Since $\mathcal{P}^*$ is finite, we have $\{I\in\mathcal{A}(\mathcal{I}_r(R))\mid\sqrt{I}\in\mathcal{P}^*\}$ is finite, and thus $T$ is finitely generated by Theorem \ref{4.8}.2.

CASE 2: \, $r=v$.

By Proposition \ref{4.11}.1 we have $R$ is a Mori domain and $v$-$\max(R)=\mathfrak{X}(R)$. This implies that $v$ is finitary, and hence the assertion follows from CASE 1.

CASE 3: \, $R$ is a $\G$-domain.

By Proposition \ref{4.11}.3 it follows that $R$ is a Cohen-Kaplansky domain. Without restriction we may suppose that $R$ is not a field. By \cite[Theorem 4.3]{An-Mo92}, $R$ is one-dimensional Noetherian whence $r$-$\max(R)=\mathfrak X(R)$ and $R$ is $r$-Noetherian. Thus we are back to CASE 1.
\end{proof}

\smallskip
We continue with an example of a domain $R$ for which $\mathcal I_v(R)$ is not unit-cancellative (whence the statements of Theorem \ref{4.13} do not hold) but $R$ satisfies three of the four conditions in Lemma \ref{4.9}.(b) characterizing generalized Cohen-Kaplansky domains.

\begin{example}\label{4.14}
There is a two-dimensional Noetherian domain $R$ satisfying the following properties:
\begin{itemize}
\item $\overline{R}$ is local Noetherian and factorial.
\item $R_\mathfrak p$ is a discrete valuation domain for all $\mathfrak p\in\mathfrak{X}(R)$.
\item $(R:\overline{R})$ is a nontrivial idempotent of $\mathcal{I}_v(R)$ and $\mathcal{I}_v(R)$ is not unit-cancellative.
\item $\overline{R}/(R:\overline{R})$ is finite and $R\subset\overline{R}$ is a root extension.
\end{itemize}
\end{example}

\begin{proof}
Let $S$ be a discrete valuation domain, and $d\in S^{\bullet}\setminus S^{\times}$ such that $S/dS$ is finite. Let $X$ be an indeterminate over $S$ and $R=\{f\in S[\![X]\!]\mid $ the linear coefficient of $f$ is divisible by $d\}$. As shown in \cite[Example 5.5.1]{Re12a}, $R$ is a two-dimensional Noetherian domain, $\overline{R}$ is local Noetherian and factorial, $R_\mathfrak p$ is a discrete valuation domain for all $\mathfrak p\in\mathfrak{X}(R)$, and $(R:\overline{R})$ is a nontrivial idempotent of $\mathcal{I}_v(R)$. Clearly, $\mathcal{I}_v(R)$ is not unit-cancellative. Along the same lines as in the proof of \cite[Lemma 5.3.3]{Re12a} it follows that $\overline{R}/(R:\overline{R})$ is finite. Set $k=|S/dS|$. It is straightforward to prove that $f^k\in R$ for all $f\in S[\![X]\!]=\overline{R}$, and thus $R\subset\overline{R}$ is a root extension.
\end{proof}

We end this section with our main arithmetical result. It combines the semigroup theoretical work in Section \ref{3} with the ideal theoretic results of the present section.

\medskip
\begin{theorem} \label{4.15}
Let $R$ be a domain with $(R:\widehat{R})\not=\{0\}$, $r$ an $r$-Noetherian ideal system on $R$ with $r$-$\max(R)=\mathfrak{X}(R)$, and suppose that $\{I\in\mathcal{A}(\mathcal{I}_r(R))\mid\sqrt{I}\supset (R\colon\widehat R)\}$ is finite.
\begin{enumerate}
\item $\mathcal{I}_r(R)$ has finite monotone catenary degree and finite successive distance. In particular, the catenary degree and the set of distances are finite.

\smallskip
\item $\mathcal I_r(R)$ satisfies the Structure Theorem for Unions.
\end{enumerate}
\end{theorem}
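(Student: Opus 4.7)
The strategy is to combine the structural decomposition of Theorem~\ref{4.8} with the semigroup theoretic results of Section~\ref{3}. Note that the hypothesis on the set $\{I \in \mathcal{A}(\mathcal{I}_r(R)) \mid \sqrt{I} \supset (R\colon\widehat R)\}$ is precisely the assumption of Theorem~\ref{4.8}.2, because $\mathcal{P}^{*} = \{\mathfrak p \in \mathfrak{X}(R) \mid \mathfrak p \supset (R\colon\widehat R)\}$ by definition. Hence Theorem~\ref{4.8} provides an isomorphism $\mathcal{I}_r(R) \cong \mathcal{F}(\mathcal{P}) \times T$ with $T$ a finitely generated monoid. Theorem~\ref{3.1} then yields $\delta(T) < \infty$ and $\mathsf{c}_{\mon}(T) < \infty$, while \cite[Proposition 3.4]{F-G-K-T17} (invoked in the proof of Theorem~\ref{3.1}) gives that $T$ is a \BF-monoid with finite set of distances; being atomic, finitely generated and BF, $T$ is even an \FF-monoid.

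For part (1), I transfer these invariants through the factorial factor $\mathcal{F}(\mathcal{P})$. For any $a = (f,t) \in \mathcal{F}(\mathcal{P}) \times T$, letting $z_f$ denote the unique factorization of $f$, the map $z \mapsto z_f z$ is a bijection $\mathsf Z(t) \to \mathsf Z(a)$ that shifts lengths by $|f|$. The free atoms appearing in $z_f$ occur identically in both members of any pair of factorizations of $a$, so they cancel in every gcd and we have $\mathsf d(z_f z, z_f z') = \mathsf d(z, z')$. Consequently $\mathsf L(a) = |f| + \mathsf L(t)$, and the invariants $\Delta$, $\delta$, $\mathsf c$, $\mathsf c_{\mon}$ of $\mathcal{I}_r(R)$ all coincide with those of $T$; in particular they are finite, proving (1).

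For part (2), since $T$ is a finitely generated \FF-monoid with finite set of distances and finite $\rho_k$ for every $k$, the characterization of the Structure Theorem for Unions obtained in \cite{F-G-K-T17} applies and yields STU for $T$. Writing $d = \min \Delta(T)$ and using $\mathsf L(f,t) = |f| + \mathsf L(t)$, one obtains the identity $\mathcal{U}_k(\mathcal{I}_r(R)) = \bigcup_{n=0}^{k} \bigl( n + \mathcal{U}_{k-n}(T)\bigr)$ for every $k \ge 1$ (if $\mathcal{P} = \emptyset$ the conclusion is trivial, as then $\mathcal{I}_r(R) \cong T$). Stitching the AAMP descriptions of the summands $\mathcal{U}_{k-n}(T)$ together then produces an AAMP description of $\mathcal{U}_k(\mathcal{I}_r(R))$ with the same difference $d$ and a bound independent of $k$. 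The main technical obstacle is precisely this final assembly step: one has to verify that the ``boundary'' sets $L'_j, L''_j$ from the AAMP descriptions of the $\mathcal{U}_j(T)$ only perturb the translated union near its extremes, and that the long progression parts $L^*_j$ overlap sufficiently as $j$ varies to fill out the interior of $\mathcal{U}_k(\mathcal{I}_r(R))$ as a single arithmetic progression of difference $d$.
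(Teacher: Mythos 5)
Your treatment of part (1) is correct and matches the paper: reduce via Theorem \ref{4.8} to the finitely generated monoid $T$, invoke Theorem \ref{3.1}, and observe that $\delta$, $\mathsf c$, and $\mathsf c_{\mon}$ are unaffected by the free abelian factor $\mathcal F(\mathcal P)$.

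Part (2), however, has a real gap. You establish the Structure Theorem for Unions for $T$ and then attempt to push it to $\mathcal F(\mathcal P)\times T$ through the formula $\mathcal U_k\bigl(\mathcal I_r(R)\bigr) = \bigcup_{n=0}^{k}\bigl(n+\mathcal U_{k-n}(T)\bigr)$, but the ``stitching'' of AAMP descriptions is exactly what is left undone, and it is not a formality: STU for $T$ only controls $\mathcal U_j(T)$ for large $j$, so the summands $n+\mathcal U_{k-n}(T)$ with $n$ close to $k$ have no AAMP structure to stitch, yet their translates sit near $k$, i.e.\ in the interior of $\mathcal U_k\bigl(\mathcal I_r(R)\bigr)$; one would then need to show they contribute nothing outside the residue class mod $d$ occupied by the long progression, and that the bound is uniform in $k$. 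You flag this yourself as the main obstacle but do not resolve it. The paper avoids the detour: it applies \cite[Theorem 2.2]{F-G-K-T17} directly to $\mathcal F(\mathcal P)\times T$, using the identities $\Delta\bigl(\mathcal F(\mathcal P)\times T\bigr)=\Delta(T)$ and $\rho_k\bigl(\mathcal F(\mathcal P)\times T\bigr)=\rho_k(T)$ for all $k$ (the latter since $\rho_{j+1}(T)\ge\rho_j(T)+1$ forces $\max_n\{n+\rho_{k-n}(T)\}=\rho_k(T)$). Because $\Delta(T)$ is finite by part (1) and $\rho_{k+1}(T)\le\rho_k(T)+M$ for some constant $M$ by \cite[Proposition 3.4]{F-G-K-T17}, the hypotheses of the characterization theorem hold for the product itself, and STU follows with no AAMP assembly. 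To repair your argument, drop the reduction to $T$ in part (2) and verify the hypotheses of \cite[Theorem 2.2]{F-G-K-T17} for $\mathcal F(\mathcal P)\times T$ directly.
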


\begin{proof}
We use all notation as in Theorem \ref{4.8} and that $\mathcal{I}_r(R)\cong\mathcal{F}(\mathcal{P})\times T$ where $T$ is a finitely generated monoid.

1. Since $\mathsf c_{\mon} \big( \mathcal F (\mathcal P) \times T \big) = \mathsf c_{\mon} (T)$ and $\delta \big( \mathcal F (\mathcal P) \times T \big) = \delta (T)$, it suffices to prove the assertion for the monoid $T$.
Theorem \ref{3.1} implies that $\delta (T) < \infty$ and that $\mathsf c_{\mon} (T) < \infty$. Since $1 + \sup \Delta (T) \le \mathsf c (T) \le \mathsf c_{\mon} (T)$, the remaining assertions follow.

\smallskip
2. It can be seen from the definitions that
\[
\Delta \big( \mathcal F (\mathcal P) \times T \big) = \Delta (T) \quad \text{and} \quad \rho_k \big( \mathcal F (\mathcal P) \times T \big) = \rho_k (T)
\]
for every $k \in \N$.
By \cite[Proposition 3.4]{F-G-K-T17}, there exists a constant $M \in \N$ such that $\rho_{k+1}(T) \le \rho_k (T) + M$ for all $k \in \N$ whence $\rho_{k+1} \big( \mathcal F (\mathcal P) \times T \big) \le \rho_k \big( \mathcal F (\mathcal P) \times T \big) + M$ for all $k \in \N$ Since $\Delta (T)$ is finite by 1., it follows that $\Delta \big( \mathcal F (\mathcal P) \times T \big)$ is finite.
Therefore, all assumptions of \cite[Theorem 2.2]{F-G-K-T17} are satisfied whence $\mathcal F (\mathcal P) \times T$ satisfies the Structure Theorem for Unions.
\end{proof}

\medskip
\section{Monoids of $v$-invertible $v$-ideals in weakly Krull monoids} \label{5}
\medskip

Weakly Krull domains were introduced by Anderson, Anderson, Mott, and Zafrullah \cite{An-An-Za92b, An-Mo-Za92}. Halter-Koch gave a divisor-theoretic characterization and showed that a domain is weakly Krull if and only if its multiplicative monoid of non-zero elements is a weakly Krull monoid (\cite{HK95a}).
We will restrict to the setting of $v$-Noetherian monoids and domains and recall that a (commutative cancellative) $v$-Noetherian monoid $H$ is weakly Krull if and only if $v$-$\max (H) = \mathfrak X (H)$ (\cite[Theorem 24.5]{HK98}). Thus one-dimensional Mori domains are weakly Krull, and by Proposition \ref{4.11} generalized Cohen-Kaplansky domains are weakly Krull Mori domains.

In this section we study the monotone catenary degree of the monoid $\mathcal I_v^*(H)$ of $v$-invertible $v$-ideals, where $H$ is a weakly Krull Mori monoid with nonempty conductor $(H \DP \widehat H)$.
This monoid is a direct product of a free abelian part and of finitely many finitely primary monoids (see \eqref{structure}).
The seminormal case has already been studied in detail. Indeed, if $H$ is a seminormal $v$-Noetherian weakly Krull monoid with proper nonempty conductor such that $H_\mathfrak p$ is finitely primary for all $\mathfrak p\in\mathfrak{X}(H)$, then the monotone catenary degree of $\mathcal I_v^*(H)$ is either $2,3$, or $5$, and it is well-understood which case occurs (\cite[Theorem 5.8]{Ge-Ka-Re15a}).

However, in general and even in the local case (thus for finitely primary monoids), the monotone catenary degree may be infinite (Remark \ref{5.3}). We study a special class of finitely primary monoids, called strongly ring-like, which was introduced by Hassler in \cite{Ha09c}. Strongly ring-like monoids of rank at most two (the restriction on the rank is essential, as outlined in Remark \ref{5.3}) have finite monotone catenary degree (Proposition \ref{5.12}) and the same is true for $\mathcal I_v^*(H)$ provided that the localizations $H_{\mathfrak p}$ are strongly ring-like of rank at most two (Theorem \ref{5.13}).

\smallskip
We begin with the local case.
A monoid $H$ is said to be {\it finitely primary} if there are $s, \alpha \in \N$ and a factorial monoid $F = F^{\times} \time \mathcal F ( \{p_1, \ldots, p_s\})$ such that $H \subset F$ with
\begin{equation}\label{eq:basic2}
H \setminus H^{\times} \subset p_1 \cdot \ldots \cdot p_s F \quad \text{and} \quad (p_1 \cdot \ldots \cdot p_s)^{\alpha}F \subset H \,.
\end{equation}
Let $H \subset F$ be finitely primary as above. Then $s$ is called the rank of $H$, $\alpha$ is called an exponent of $H$, and $\mathsf v \colon H \to \N_0^s$, defined by $(a \mapsto \big(\mathsf v_{p_1}(a), \ldots, \mathsf v_{p_s} (a) \big)$ for all $a \in H$, denotes the map from $H$ to its value semigroup $\mathsf v (H)$. It is well-known (\cite[Theorems 2.9.2 and 3.1.5]{Ge-HK06a}) that $H$ is primary, that $F$ is the complete integral closure of $H$, and that $s = |\mathfrak X ( \widehat H)|$. Clearly, every finitely primary monoid is strongly primary.

The following two lemmas gather the main arithmetical properties of finitely primary monoids and the connection to ring theory.

\medskip
\begin{lemma}\label{5.1}
Let $H$ be a finitely primary monoid. Then $H$ has finite catenary degree, finite set of distances, and it satisfies the Structure Theorem for Sets of Lengths. Moreover, if the rank of $H$ is greater than one, then $H$ is not half-factorial.
\end{lemma}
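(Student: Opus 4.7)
My plan is to analyze $H$ through its value map $\mathsf{v}\colon H\to\mathbb{N}_0^s$, $a\mapsto\bigl(\mathsf{v}_{p_1}(a),\ldots,\mathsf{v}_{p_s}(a)\bigr)$, into the factorial monoid $F = F^\times \times \mathcal F(\{p_1,\ldots,p_s\})$. First I would establish that $H$ is a BF-monoid: since $H\setminus H^{\times}\subset p_1\cdots p_s F$, every atom $u\in\mathcal A(H)$ satisfies $\mathsf{v}_{p_i}(u)\ge 1$ for every $i\in[1,s]$, so every factorization $a=u_1\cdots u_k$ has length $k\le\min_{i}\mathsf{v}_{p_i}(a)$. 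Hence $\mathsf L(a)$ is finite for every $a\in H$.

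For the finiteness of $\mathsf c(H)$ and $\Delta(H)$, the decisive structural fact is that $(p_1\cdots p_s)^{\alpha}F\subset H$, so the value semigroup $\mathsf{v}(H)\subset\mathbb{N}_0^s$ contains the tail $\alpha(1,\ldots,1)+\mathbb{N}_0^s$; in particular $H_{\mathrm{red}}$ is determined up to a finite amount of boundary data. The strategy is to compare two factorizations $z,z'\in\mathsf Z(a)$ by local modifications: after cancelling shared atoms we may assume they are disjoint, and then any pair of atoms can be swapped against an equivalent pair because the relevant partial products land in $(p_1\cdots p_s)^{\alpha}F\subset H$. A careful bookkeeping produces a concatenating chain in which each step has distance bounded by an explicit function of $\alpha$ and $s$; this is the core of \cite[Theorem 3.1.5]{Ge-HK06a}. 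The finiteness of $\Delta(H)$ then follows from $1+\sup\Delta(H)\le\mathsf c(H)$ (whenever $H$ is not half-factorial). The same local analysis of $\mathsf v(H)$ shows that for every $a\in H$ whose value vector lies sufficiently deep inside $\alpha(1,\ldots,1)+\mathbb{N}_0^s$, the set $\mathsf L(a)$ is an AAMP with difference $\min\Delta(H)$ and universally bounded defect $M$; elements near the boundary have sets of lengths of bounded cardinality and are absorbed into $M$. This yields the Structure Theorem for Sets of Lengths.

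For the last assertion, suppose $s\ge 2$ and pick $n>2\alpha$; set $a_n=(p_1\cdots p_s)^{n\alpha}$. Since $u_0=(p_1\cdots p_s)^{\alpha}\in H$ admits some factorization of length $\ell\ge 1$, raising it to the $n$-th power gives a factorization of $a_n$ of length $n\ell\ge n$, so $\max\mathsf L(a_n)\ge n$. On the other hand, set
\[
v_1=p_1^{\alpha}\cdots p_{s-1}^{\alpha}p_s^{(n-1)\alpha},\qquad v_2=p_1^{(n-1)\alpha}\cdots p_{s-1}^{(n-1)\alpha}p_s^{\alpha}.
\]
Both $v_1,v_2\in(p_1\cdots p_s)^{\alpha}F\subset H$ and $v_1v_2=a_n$; refining each factor into atoms (each of length at most $\alpha$ by the BF-estimate from the first paragraph) yields a factorization of $a_n$ of length at most $2\alpha<n$. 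Hence $|\mathsf L(a_n)|\ge 2$, so $H$ is not half-factorial.

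The main obstacle is the explicit bound on $\mathsf c(H)$: strong primarity alone does not suffice, and one must exploit the rigidity of $\mathsf v(H)$ deep inside $\mathbb N_0^s$ to produce a concatenating chain of bounded step-size, together with the analogous stability needed to pin down the AAMP shape of $\mathsf L(a)$. This is precisely why the cleanest route is to appeal to the detailed finitely-primary analysis in \cite{Ge-HK06a}.
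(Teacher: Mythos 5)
Your proposal is correct and follows essentially the same route as the paper: both defer the finiteness of the catenary degree, of the set of distances, and the Structure Theorem for Sets of Lengths to the finitely-primary results in \cite{Ge-HK06a} (Theorems 2.9.2, 3.1.1, 3.1.5, and Corollary 4.5.5). Your explicit verification of non-half-factoriality for rank $s\ge 2$ via $a_n=(p_1\cdot\ldots\cdot p_s)^{n\alpha}=u_0^n=v_1v_2$ with $n>2\alpha$ is correct and is a welcome self-contained supplement, since the paper's one-line proof leaves that claim implicit in the citations.
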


\begin{proof}
By \cite[Corollary 4.5.5]{Ge-HK06a} we have $H$ satisfies the Structure Theorem for Sets of Lengths. It follows from \cite[Theorems 2.9.2.4, 3.1.5.2, 3.1.1.2]{Ge-HK06a} that $\Delta(H)$ is finite.
\end{proof}

\medskip
\begin{lemma}\label{5.2}
Let $R$ be a domain.
\begin{enumerate}
\item $R^{\bullet}$ is finitely primary if and only if $R$ is one-dimensional local, $(R \DP \widehat R) \ne \{0\}$, and $\widehat R$ is a semilocal principal ideal domain.

\item If $R$ is a one-dimensional local Mori domain such that $(R \DP \widehat R) \ne \{0\}$, then $ R^{\bullet}$ is finitely primary of rank $|\mathfrak X (\widehat R)|$.
\end{enumerate}
\end{lemma}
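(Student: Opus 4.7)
My plan is to treat the two parts in order, with part~2 reducing to part~1 once sufficient structural facts about the complete integral closure of a Mori domain are in place. The central technical fact I will use throughout is that whenever $(R \DP \widehat R) \neq \{0\}$, the complete integral closure $\widehat R$ is integral over $R$: for any $x \in \widehat R$ and any nonzero $c \in (R \DP \widehat R)$, one has $c x^n \in R$ for all $n$, so $R[x] \subset c^{-1} R$ is a finitely generated $R$-module. This observation lets me invoke Lying Over and Going Up freely and conclude $\dim \widehat R = \dim R$.

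For the forward direction of part~1, the definition of finitely primary gives a factorial monoid $F = F^\times \times \mathcal F(\{p_1, \ldots, p_s\})$ with $R^\bullet \subset F$, together with the two containments of~\eqref{eq:basic2}. The standard facts recalled just before the lemma identify $F = \widehat{R^\bullet}$ and tell us that $R^\bullet$ is primary; hence $\widehat R = F \cup \{0\}$ is a UFD with exactly $s$ nonassociate primes, and $R$ is one-dimensional local. The second inclusion of~\eqref{eq:basic2} immediately yields $(p_1 \cdots p_s)^\alpha \in (R \DP \widehat R)$, so the conductor is nonzero. Combined with $\dim \widehat R = 1$ from the integrality fact, a one-dimensional UFD with finitely many primes is automatically a semilocal PID.

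For the converse in part~1, I start from the decomposition $\widehat R^\bullet = \widehat R^\times \times \mathcal F(\{p_1, \ldots, p_s\})$ furnished by the semilocal PID hypothesis, with the $p_i$ generating the $s = |\mathfrak X(\widehat R)|$ maximal ideals. Picking $c \in (R \DP \widehat R) \cap R^\bullet$ and writing $c = u\, p_1^{a_1}\cdots p_s^{a_s}$ with $u \in \widehat R^\times$, the exponent $\alpha = \max_i a_i$ satisfies $(p_1 \cdots p_s)^\alpha \widehat R \subset c \widehat R \subset R$, giving the second inclusion of~\eqref{eq:basic2}. For the first inclusion, integrality together with $R$ being local forces, via Going Up, each maximal ideal $p_i \widehat R$ to contract to the maximal ideal of $R$; hence every non-unit of $R$ is divisible by each $p_i$ in $\widehat R$, and the $p_i$ being pairwise coprime in the PID $\widehat R$, it is divisible by their product.

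For part~2, it suffices to verify the three conditions of part~1; one-dimensionality and nontriviality of the conductor are given, so only the semilocal PID property for $\widehat R$ needs proof. The standard structure theorem for Mori domains with nonzero conductor (see \cite[Section~2.3]{Ge-HK06a}) gives that $\widehat R$ is a Krull domain; integrality over the one-dimensional $R$ then forces $\dim \widehat R = 1$, making $\widehat R$ a Dedekind domain. Finally, every maximal ideal of $\widehat R$ contracts to the maximal ideal of $R$ and therefore contains the nonzero conductor $\mathfrak f$; but in a Krull domain only finitely many height-one primes contain any given nonzero element, so $\widehat R$ is semilocal, and a semilocal Dedekind domain is a PID. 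The main subtlety I expect is the correct invocation and bookkeeping of these standard results---specifically the Krull property of $\widehat R$ in the Mori setting and the upgrade from ``UFD with finitely many primes'' to ``semilocal PID'' via one-dimensionality.
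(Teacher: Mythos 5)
The central technical fact you rely on throughout---that $(R \DP \widehat R) \neq \{0\}$ forces $\widehat R$ to be integral over $R$---is false, and the paper's own Example \ref{5.8} supplies a counterexample. Your justification, namely that $R[x] \subset c^{-1}R$ makes $R[x]$ a finitely generated $R$-module, silently assumes that submodules of a cyclic $R$-module are finitely generated, i.e.\ a Noetherian hypothesis that is not available. The condition ``$cx^n \in R$ for all $n$'' is precisely \emph{almost} integrality, which is strictly weaker than integrality outside the Noetherian world; indeed $\widehat R$ is by definition the ring of almost integral elements. Concretely, take $R = K + X L[\![X]\!]$ with $L/K$ a purely transcendental field extension (Example \ref{5.8}, Case 1): then $R$ is a one-dimensional local Mori domain, $\widehat R = L[\![X]\!]$, $X \in (R \DP \widehat R) \neq \{0\}$, and yet $R$ is integrally closed while $\widehat R \supsetneq R$; so $\widehat R$ is not integral over $R$. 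Consequently Lying Over and Going Up are simply not at your disposal, and this undermines each of the three places you invoke them: the deduction $\dim \widehat R = 1$ in the forward direction of part 1, the contraction $(p_i) \cap R = \mathfrak m_R$ in the converse, and the one-dimensionality of $\widehat R$ in part 2.

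All three gaps can be closed without integrality. In the forward direction of part 1, every nonzero element of $\widehat R$ is an associate of a monomial in $p_1, \dots, p_s$, so every nonzero prime of $\widehat R$ is contained in $(p_1) \cup \cdots \cup (p_s)$ and hence, by prime avoidance, equals some $(p_i)$; this directly yields that $\widehat R$ is a semilocal PID with $\mathfrak X(\widehat R) = \{(p_1), \dots, (p_s)\}$, with no appeal to $\dim \widehat R = \dim R$. In the converse (and again in part 2), instead of Going Up argue that $(p_i) \cap R = \{0\}$ is impossible, since it would make $\widehat R_{(p_i)}$ contain $q(R) = q(\widehat R)$ and hence be a field rather than a DVR; so $(p_i) \cap R$ is the unique nonzero prime $\mathfrak m_R$ of $R$, giving the first containment of \eqref{eq:basic2}. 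For part 2, once $\widehat R$ is known to be Krull, every nonzero prime of $\widehat R$ contracts to $\mathfrak m_R$ by the same localization argument, hence contains any fixed nonzero element of $\mathfrak m_R$; only finitely many height-one primes can do so, and the intersection of the corresponding DVRs is then a semilocal PID, which recovers one-dimensionality automatically. Note finally that the paper's own proof is a bare citation to \cite[Proposition 2.10.7]{Ge-HK06a}, so there is no alternative argument in the paper to compare against; but as it stands your write-up rests on a false lemma and must be revised along the lines above.
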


\begin{proof}
See \cite[Proposition 2.10.7]{Ge-HK06a}.
\end{proof}

\medskip
\begin{remark}\label{5.3}~

1. Let $H \subset F$ be a finitely primary monoid as in \eqref{eq:basic2}. By \cite[Corollary 2.9.8]{Ge-HK06a}, $H$ is a C-monoid if and only if the following two conditions are fulfilled:
\begin{enumerate}
\item[(a)]
There exists a subgroup $V \subset \widehat H^\times$ of finite index such that \ $V(H \setminus H^\times) \subset H$.

\smallskip

\item[(b)]
There exists some $\alpha \in \N$ such that, for every $j\in [1,s]$ and $a\in p_j^\alpha \widehat H$, we have $a\in H$ if and only if \ $p_j^\alpha a\in H$.
\end{enumerate}
In general, finitely primary monoids need neither be $v$-Noetherian nor C-monoids (see \cite{HK-Ha-Ka04, Re13a}).

\smallskip
2. Let $H \subset F$ be a finitely primary monoid as above. Then $H_{\red}$ is finitely generated if and only if $s=1$ and $(F^{\times}\DP H^{\times})<\infty$ (\cite[Theorem 2.9.2]{Ge-HK06a}). If $H_{\red}$ is finitely generated, then $\delta_w(H)\le\delta(H)<\infty$ and $\mathsf c_{\mon}(H)<\infty$ by Theorem \ref{3.1}.

\smallskip
3. In contrast to Lemma \ref{5.1}, there is a finitely primary monoid of rank one with $\mathsf c_{\mon} (H) = \delta (H)=\infty$. Furthermore, there are finitely primary monoids of rank two and exponent two having infinite monotone catenary degree (see \cite[Remark 4.6, Examples 4.5 and 4.16]{Fo06a}).

\smallskip
4. There are one-dimensional local Noetherian domains $R$ with maximal ideal $\mathfrak m$ such that:
\begin{itemize}
\item $\overline R$ is a finitely generated $R$-module and $R/\mathfrak m$ is finite.
\item ($\overline R$ has $2$ maximal ideals and $\delta(R^{\bullet})=\infty$) or ($\overline R$ has $3$ maximal ideals and $\mathsf c_{\rm mon}(R^{\bullet})=\infty$).
\end{itemize}
(\cite[Examples 6.3 and 6.5]{Ha09c}). By Lemma \ref{5.2}, $R^{\bullet}$ is finitely primary of rank two or three.
\end{remark}

\smallskip
The examples discussed in Remark \ref{5.3} show that finitely primary monoids need to satisfy further structural properties if we want their monotone catenary degree to be finite. Such properties were introduced by Hassler in \cite{Ha09c}, and we recall the definition.

\medskip
\begin{definition}\label{5.4}
Let $H$ be a finitely primary monoid of rank $s\in\mathbb N$ such that there exist some exponent $\alpha\in\mathbb N$ of $H$ and some system $\{p_i\mid i\in [1,s]\}$ of representatives of the prime elements of $\widehat H$ with the following property: for all $i\in [1,s]$ and for all $a\in\widehat H$ with $\mathsf v_{p_i} (a)\ge\alpha$ we have $p_ia\in H$ if and only if $a\in H$. Then $H$ is said to be
\begin{itemize}
\item {\it ring-like} if ${\widehat H}^{\times}/H^{\times}$ is finite or $\{(\mathsf v_{p_i}(a))_{i=1}^s\mid a\in H\setminus H^{\times}\}\subset\mathbb N^s$ has a smallest element with respect to the partial order.

\smallskip
\item {\it strongly ring-like} if ${\widehat H}^{\times}/H^{\times}$ is finite and $\{(\mathsf v_{p_i}(a))_{i=1}^s\mid a\in H\setminus H^{\times}\}\subset\mathbb N^s$ has a smallest element with respect to the partial order.
\end{itemize}
\end{definition}

Let $H$ be a ring-like monoid of rank $s$ and exponent $\alpha$ and $\{p_i\mid i\in [1,s]\}$ a system of representatives of the prime elements of $\widehat H$. We say that $\alpha$ and $\{p_i\mid i\in [1,s]\}$ are suitably chosen if for all $i\in [1,s]$ and for all $a\in\widehat H$ with $\mathsf v_{p_i} (a)\ge\alpha$ we have $p_ia\in H$ if and only if $a\in H$.

We continue with a ring theoretical analysis which one-dimensional local domains are strongly ring-like (Propositions \ref{5.6} and \ref{5.9}).
The characterization mentioned in Remark \ref{5.3}.1 shows that strongly ring-like monoids are C-monoids and hence in particular they are $v$-Noetherian.

\medskip
\begin{lemma}\label{5.5}
Let $S$ be a commutative ring, $a,b\in S$, and $\mathcal{R}\subset S$ such that $x-y\in S^{\times}$ for all distinct $x,y\in\mathcal{R}$. If $\mathfrak{I}$ is a finite set of ideals of $S$ such that $|\mathfrak{I}|<|\mathcal{R}|$ and $b\not\in I$ for all $I\in\mathfrak{I}$, then there is some $\eta\in\mathcal{R}$ such that $a+\eta b\not\in\bigcup_{I\in\mathfrak{I}} I$.
\end{lemma}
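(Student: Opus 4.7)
The plan is to apply a pigeonhole/prime-avoidance style argument. For each ideal $I\in\mathfrak{I}$, I define
\[
\mathcal{R}_I = \{\eta\in\mathcal{R}\mid a+\eta b\in I\},
\]
and it suffices to prove that $|\mathcal{R}_I|\le 1$ for every $I\in\mathfrak{I}$; granting that, the set $\bigcup_{I\in\mathfrak{I}}\mathcal{R}_I$ has cardinality at most $|\mathfrak{I}|<|\mathcal{R}|$, so some $\eta\in\mathcal{R}$ lies outside it, and this $\eta$ satisfies $a+\eta b\notin\bigcup_{I\in\mathfrak{I}} I$ as required.

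To establish $|\mathcal{R}_I|\le 1$, fix $I\in\mathfrak{I}$ and suppose toward a contradiction that there exist distinct $\eta_1,\eta_2\in\mathcal{R}_I$. Then both $a+\eta_1 b$ and $a+\eta_2 b$ lie in $I$, so their difference $(\eta_1-\eta_2)b$ lies in $I$. By the hypothesis on $\mathcal{R}$, the element $\eta_1-\eta_2$ is a unit of $S$, whence $b=(\eta_1-\eta_2)^{-1}(\eta_1-\eta_2)b\in I$, contradicting the assumption that $b\notin I$.

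There is no real obstacle here; the only conceptual ingredient is the observation that the units produced by the difference set of $\mathcal{R}$ force $b$ into $I$ whenever two distinct $\eta$ both send $a+\eta b$ into $I$, and the rest is a straightforward counting argument using $|\mathfrak{I}|<|\mathcal{R}|$. The lemma should be presented exactly in this two-step form: first the bound $|\mathcal{R}_I|\le 1$, then the cardinality comparison.
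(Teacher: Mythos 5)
Your proof is correct, and it is noticeably cleaner than the paper's. The paper proceeds by induction on $k = |\mathfrak{I}|$: at step $k+1$ it applies the induction hypothesis to produce, for each $I\in\mathfrak{I}$, an element $\eta_I\in\mathcal{R}$ with $a+\eta_I b\notin\bigcup_{J\neq I}J$, then splits into the case where some $a+\eta_J b$ already misses $J$ (done) and the case where $a+\eta_I b\in I$ for every $I$, in which case it picks an unused $\eta\in\mathcal{R}$ and derives $b\in J$ from $(\eta-\eta_J)b\in J$ and invertibility of $\eta-\eta_J$. Your version isolates exactly the same core observation---that two distinct $\eta_1,\eta_2\in\mathcal{R}$ with $a+\eta_1 b,\,a+\eta_2 b\in I$ force $b\in I$ via the unit $\eta_1-\eta_2$---but recasts it as the bound $|\mathcal{R}_I|\le 1$, after which a single pigeonhole count $\bigl|\bigcup_{I\in\mathfrak{I}}\mathcal{R}_I\bigr|\le|\mathfrak{I}|<|\mathcal{R}|$ finishes the argument with no induction and no case analysis. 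What the direct route buys you is transparency: it makes visible that each ideal can ``capture'' at most one $\eta$, which is really the entire content of the lemma, whereas the induction in the paper obscures this and introduces bookkeeping (the family $(\eta_I)_I$ and the two cases) that your argument renders unnecessary. Both proofs are equally general; yours is the one I would keep.
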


\begin{proof} It is sufficient to show by induction that for all $k\in\mathbb{N}_0$ and all sets of ideals $\mathfrak{I}$ of $S$ such that $k=|\mathfrak{I}|<|\mathcal{R}|$ and $b\not\in I$ for all $I\in\mathfrak{I}$ it follows that there is some $\eta\in\mathcal{R}$ such that $a+\eta b\not\in\bigcup_{I\in\mathfrak{I}} I$.

The assertion is clear for $k=0$. Now let $k\in\mathbb{N}_0$ and $\mathfrak{I}$ a set of ideals of $S$ be such that $k+1=|\mathfrak{I}|<|\mathcal{R}|$ and $b\not\in I$ for all $I\in\mathfrak{I}$. By the induction hypothesis there is some $(\eta_I)_{I\in\mathfrak{I}}\in\mathcal{R}^{\mathfrak{I}}$ such that for all $I\in\mathfrak{I}$ it follows that $a+\eta_I b\not\in\bigcup_{J\in\mathfrak{I}\setminus\{I\}} J$.

\smallskip
\noindent
CASE 1: \, $a+\eta_J b\not\in J$ for some $J\in\mathfrak{I}$.

Then $a+\eta_J b\not\in\bigcup_{I\in\mathfrak{I}} I$.

\smallskip
\noindent
CASE 2: \, $a+\eta_I b\in I$ for all $I\in\mathfrak{I}$.

There is some $\eta\in\mathcal{R}\setminus\{\eta_I\mid I\in\mathfrak{I}\}$. Assume that $a+\eta b\in J$ for some $J\in\mathfrak{I}$. Then $(\eta-\eta_J)b=a+\eta b-(a+\eta_J b)\in J$, hence $b\in J$, a contradiction. Therefore, $a+\eta b\not\in\bigcup_{I\in\mathfrak{I}} I$.
\end{proof}

If $R$ is a domain such that $R^{\bullet}$ is finitely primary and $\mathfrak m=R\setminus R^{\times}$, then set $V(\mathfrak m^{\bullet})=\linebreak\{(\mathsf v_\mathfrak q(a))_{\mathfrak q\in\max(\widehat{R})}\mid a\in\mathfrak m^{\bullet}\}$.

\medskip
\begin{proposition}\label{5.6}
Let $R$ be a domain such that $R^{\bullet}$ is finitely primary and $\mathfrak m=R\setminus R^{\times}$.
\begin{enumerate}
\item[\textnormal{1.}] If $|\max(\widehat{R})|\leq |R/\mathfrak m|$, then $V(\mathfrak m^{\bullet})$ has a smallest element.
\item[\textnormal{2.}] The following statements are equivalent{\rm \,:}
\begin{enumerate}
\item[(a)] $\widehat{R}^{\times}/R^{\times}$ is finite.
\item[(b)] $R$ is a discrete valuation domain or $(R$ is Noetherian and $|R/\mathfrak m|<\infty)$.
\item[(c)] $R$ is a discrete valuation domain or $(\widehat{R}$ is a finitely generated $R$-module and $|R/\mathfrak m|<\infty)$.
\item[(d)] $R$ is an \FF-domain.
\end{enumerate}
\item[\textnormal{3.}] There are some $\alpha,s\in\mathbb{N}$ and some system $\{p_i\mid i\in [1,s]\}$ of representatives of prime elements of $\widehat{R}$ such that for all $i\in [1,s]$ and $a\in\widehat{R}$ with $\mathsf v_{p_i}(a)\geq\alpha$ it follows that $p_ia\in R$ if and only if $a\in R$.
\end{enumerate}
\end{proposition}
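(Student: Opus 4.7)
For Part 1, the plan is to exploit the setup from Lemma 5.2, which gives that $\widehat R$ is a semilocal principal ideal domain with $s := |\max(\widehat R)|$ maximal ideals represented by primes $p_1, \ldots, p_s$, the non-zero conductor $\mathfrak f := (R\DP\widehat R) = \prod_j p_j^{\alpha_j}\widehat R$, and the identification $\widehat R/\mathfrak f \cong \prod_j A_j$ with $A_j := \widehat R/p_j^{\alpha_j}\widehat R$. Applying Dickson's lemma to $V(\mathfrak m^\bullet) \subset \mathbb N^s$ extracts finitely many componentwise-minimal elements; I would form their componentwise infimum $v = (v_1, \ldots, v_s)$ and exhibit $c \in \mathfrak m^\bullet$ with $\mathsf v(c) = v$. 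For each $i$ pick $a_i \in \mathfrak m^\bullet$ with $\mathsf v_{p_i}(a_i) = v_i$ (so $a_i \notin I_i := p_i^{v_i+1}\widehat R \cap R$), take $\mathcal R \subset R$ to be a complete set of residues modulo $\mathfrak m$ (with $|\mathcal R| = |R/\mathfrak m| \ge s$ and pairwise differences in $R^\times$ since $\mathfrak m$ is maximal), and iteratively set $c_1 := a_1$ and, for $k \ge 2$, $c_k := c_{k-1} + \eta_k a_k$, where $\eta_k \in \mathcal R$ is produced by Lemma 5.5 applied with $a := c_{k-1}$, $b := a_k$, and $\mathfrak I_k := \{I_i : i \le k,\ a_k \notin I_i\}$. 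Indices $i \le k$ with $a_k \in I_i$ are handled automatically (since then $\eta_k a_k \in I_i$ propagates $c_{k-1} \notin I_i$ to $c_k$); the degenerate case $|\mathfrak I_s| = s$ at the final step would force $\mathsf v(a_s) = v$ outright, and otherwise $|\mathfrak I_k| \le s - 1 < |\mathcal R|$ makes Lemma 5.5 applicable.

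For Part 2, I would close the cycle among the four conditions. The pair (b) $\iff$ (c) combines the observation that a non-zero conductor realises $\widehat R$ as a fractional $R$-ideal, hence finitely generated over a Noetherian $R$, with Eakin--Nagata applied to the module-finite, Noetherian extension $R \subset \widehat R$. For (c) $\Rightarrow$ (a) I would use a filtration argument: Noetherianness forces $\mathfrak m^n \subset \mathfrak f$ for some $n$, each $\mathfrak m^i \widehat R / \mathfrak m^{i+1} \widehat R$ is a finite $R/\mathfrak m$-vector-space, hence $\widehat R/\mathfrak f$ is finite; Lemma 4.10 together with \cite[Theorem 2.9.11]{Ge-HK06a} then yields finiteness of $\widehat R^\times/R^\times$. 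The equivalence (a) $\iff$ (d) is the standard \FF-characterisation for finitely primary monoids, \cite[Theorem 2.9.2]{Ge-HK06a}. The subtle remaining direction (a) $\Rightarrow$ (b) assumes $R$ is not a DVR: I would argue that an infinite residue field $R/\mathfrak m$ would yield infinitely many distinct classes in $\widehat R^\times/R^\times$ via the canonical projection $\widehat R^\times \to (\widehat R/\mathfrak f)^\times$, contradicting (a); once $|R/\mathfrak m| < \infty$ is secured, Noetherianness of $R$ follows via (c) $\Rightarrow$ (b).

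For Part 3, I propose $\alpha := \max_j \alpha_j$ together with any system $\{p_i\}$ of prime representatives of $\widehat R$. For $a \in \widehat R$ with $\mathsf v_{p_i}(a) \ge \alpha$, both $\bar a$ and $\overline{p_i a}$ in $\widehat R/\mathfrak f \cong \prod_j A_j$ have zero $i$-th component, and on the remaining components $\bar p_i$ acts by multiplication by units of $A_j$ (since $p_i \notin p_j\widehat R$ for $j \ne i$). Membership $a \in R$ reduces to $\bar a \in R/\mathfrak f$, so the desired biconditional $p_i a \in R \iff a \in R$ amounts to the stability of the kernel $J_i := \ker(R/\mathfrak f \to A_i)$ under multiplication by $\bar p_i$. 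This stability is the main technical obstacle of the proof; my plan is to enlarge $\alpha$ further if needed and to exploit the $R/\mathfrak f$-ideal structure of $J_i$, possibly after adjusting the prime representatives within their $\widehat R^\times$-orbits, so that $\bar p_i J_i \subset J_i$ holds together with the reverse containment enforced by the unit action on the complementary components.
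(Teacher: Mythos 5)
Your Part 1 is a valid alternative to the paper's argument. The paper applies Dickson's theorem and shows by a \emph{single} use of Lemma \ref{5.5} (in $\widehat R$, applied to $e/d$ and $f/d$) that two distinct minimal points of $V(\mathfrak m^{\bullet})$ --- which, being incomparable, agree in at most $s-2$ coordinates --- would force a strictly smaller point, so the minimal point is unique. You instead build an element realizing the coordinate-wise infimum $v$ by $s$ iterated applications of Lemma \ref{5.5} in $R$, and you do correctly account for the degenerate last step $|\mathfrak I_s|=s$. Both work; the paper's is more economical.

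The gaps are in Parts 2 and 3. In (c)$\Rightarrow$(a) you claim that Lemma \ref{4.10} together with \cite[Theorem 2.9.11]{Ge-HK06a} ``yields finiteness of $\widehat R^{\times}/R^{\times}$.'' That inference is false: being a $\C$-domain does not imply $\widehat R^{\times}/R^{\times}$ is finite. The paper's own Example \ref{5.8} gives a one-dimensional local Mori domain $R=K+XL[\![X]\!]$ (with $K$ finite, $L$ infinite) which satisfies the C-monoid criterion of Remark \ref{5.3}.1 (take $V=\widehat H^{\times}$ and $\alpha=1$, both conditions hold trivially) yet has $\widehat R^{\times}/R^{\times}\cong L^{\times}/K^{\times}$ infinite. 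After establishing $\widehat R/\mathfrak f$ finite, the correct route is $\widehat R/R$ finite together with locality of $R$ and \cite[Proposition 3.5]{Re13a} --- not a C-monoid statement. In (a)$\Rightarrow$(b) your plan is circular: ``once $|R/\mathfrak m|<\infty$ is secured, Noetherianness follows via (c)$\Rightarrow$(b)'' presupposes (c), i.e.\ that $\widehat R$ is a finitely generated $R$-module, which (a) plus finite residue field does not obviously give. The missing ingredient, $\widehat R/\mathfrak f$ finite, is precisely what \cite[Theorems 4.2 and 4.3]{Re13a} provide and what the paper invokes instead of reproving; your ``canonical projection'' remark is only a gesture in that direction (the identification $\widehat R^{\times}/R^{\times}\cong(\widehat R/\mathfrak f)^{\times}/\overline{R^{\times}}$ is available, but extracting infiniteness from an infinite residue field when $s=1$ still requires work you have not supplied).

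Part 3 you explicitly leave open, and the proposed fixes will not close it. The reduction is fine as far as it goes ($a\in R\Leftrightarrow\bar a\in R/\mathfrak f$, both sides vanish in $A_i$, $\bar p_i$ acts as a unit on $K_i=\ker(\widehat R/\mathfrak f\to A_i)$, so the claim is equivalent to $\bar p_iJ_i=J_i$ with $J_i=\ker(R/\mathfrak f\to A_i)$). But enlarging $\alpha$ only controls which components of $\bar a$ vanish; it does not affect whether $\bar p_i$ --- which lies in $\widehat R/\mathfrak f$ but typically not in $R/\mathfrak f$ --- carries $J_i$ into $R/\mathfrak f$. The containment $\bar p_iJ_i\subset J_i$ unwinds to the very statement $p_ia\in R$ you are trying to prove, and the reverse containment would follow from the injectivity of $\bar p_i$-multiplication only if $J_i$ were finite, which is not available in the generality of Part 3 (no hypothesis forces $\widehat R/\mathfrak f$ finite). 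The actual argument lives in \cite[Theorem 2.7]{HK-Ha-Ka04}, which the paper cites; your sketch does not replace it.
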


\begin{proof}
1. Let $|\max(\widehat{R})|\leq |R/\mathfrak m|$. By \cite[Theorem 1.5.3]{Ge-HK06a} it suffices to show that $|{\rm Min}(V(\mathfrak m^{\bullet}))|\leq 1$. Assume to the contrary that there are distinct $x,y\in {\rm Min}(V(\mathfrak m^{\bullet}))$. There are some $e,f\in\mathfrak m^{\bullet}$ such that $x=(\mathsf v_\mathfrak q(e))_{\mathfrak q\in\max(\widehat{R})}$ and $y=(\mathsf v_\mathfrak q(f))_{\mathfrak q\in\max(\widehat{R})}$. Since $\widehat{R}$ is a principal ideal domain, there is some $d\in {\rm GCD}_{\widehat{R}}(e,f)$. Set $a=\frac{e}{d}$ and $b=\frac{f}{d}$. There is some $\mathcal{R}\subset R^{\times}$ such that $|\mathcal{R}|=|R/\mathfrak m|-1$ and $v-w\in R^{\times}\subset\widehat{R}^{\times}$ for all distinct $v,w\in\mathcal{R}$. Set $\mathfrak{I}=\{\mathfrak q\in\max(\widehat{R})\mid\mathsf v_\mathfrak q(e)=\mathsf v_\mathfrak q(f)\}$. We have $\mathfrak{I}\subset\{\mathfrak q\in\max(\widehat{R})\mid b\not\in\mathfrak q\}$. Since $x,y\in {\rm Min}(V(\mathfrak m^{\bullet}))$ are distinct it follows that $|\mathfrak{I}|<|\max(\widehat{R})|-1\leq |R/\mathfrak m|-1=|\mathcal{R}|$. By Lemma \ref{5.5} there is some $\eta\in\mathcal{R}$ such that $a+\eta b\not\in\bigcup_{\mathfrak q\in\mathfrak{I}} \mathfrak q$.

Next we show that $\mathsf v_\mathfrak n(e+\eta f)=\min \{ \mathsf v_\mathfrak n(e),\mathsf v_\mathfrak n(f)\}$ for all $\mathfrak n\in\max(\widehat{R})$. Let $\mathfrak n\in\max(\widehat{R})$.

\smallskip
\noindent
CASE 1: \, $\mathsf v_\mathfrak n(e)\not=\mathsf v_\mathfrak n(f)$.

Since $\mathsf v_\mathfrak n(e)\not=\mathsf v_\mathfrak n(\eta f)$ it follows that $\mathsf v_\mathfrak n(e+\eta f)=\min \{\mathsf v_\mathfrak n(e),\mathsf v_\mathfrak n(\eta f) \}=\min \{ \mathsf v_\mathfrak n(e),\mathsf v_\mathfrak n(f) \}$.

\smallskip
\noindent
CASE 2: \, $\mathsf v_\mathfrak n(e)=\mathsf v_\mathfrak n(f)$.

Since $\mathfrak n\in\mathfrak{I}$ we obtain that $a+\eta b\not\in\mathfrak n$. Consequently, $\mathsf v_\mathfrak n(e+\eta f)=\mathsf v_\mathfrak n(d(a+\eta b))=\mathsf v_\mathfrak n(d)+\mathsf v_\mathfrak n(a+\eta b)=\mathsf v_\mathfrak n(d)=\min \{ \mathsf v_\mathfrak n(e),\mathsf v_\mathfrak n(f) \}$.

Since $(\mathsf v_\mathfrak q(e+\eta f))_{\mathfrak q\in\max(\widehat{R})}\in V(\mathfrak m^{\bullet})$ we infer that $x=(\mathsf v_\mathfrak q(e+\eta f))_{\mathfrak q\in\max(\widehat{R})}=y$, a contradiction.

2. (a) $\Rightarrow$ (b) Let $\widehat{R}^{\times}/R^{\times}$ be finite and $R$ not a discrete valuation domain. It follows from \cite[Theorems 4.2 and 4.3]{Re13a} that $R$ is Noetherian and $\widehat{R}/(R:\widehat{R})$ is finite. Since $R$ is not a discrete valuation domain we have $(R:\widehat{R})\subset\mathfrak m$, and thus $|R/\mathfrak m|<\infty$.

(b) $\Rightarrow$ (c) This is clear, since $(R:\widehat{R})\not=\{0\}$.

(c) $\Rightarrow$ (a) The assertion is clear if $R$ is a discrete valuation domain. Now let $\widehat{R}$ be a finitely generated $R$-module and $|R/\mathfrak m|<\infty$. We have $\widehat{R}/\mathfrak q$ is a finite-dimensional $R/\mathfrak m$-vector space for all $\mathfrak q\in\max(\widehat{R})$, hence $\widehat{R}/\mathfrak q$ is finite for all $\mathfrak q\in\max(\widehat{R})$. Since $\widehat{R}$ is a principal ideal domain we infer that $\widehat{R}/I$ is finite for every nonzero ideal $I$ of $\widehat{R}$. Consequently, $\widehat{R}/(R:\widehat{R})$ is finite, and thus $\widehat{R}/R$ is finite. Since $R$ is local it follows from \cite[Proposition 3.5]{Re13a} that $\widehat{R}^{\times}/R^{\times}$ is finite.

(a) $\Leftrightarrow$ (d) This is an immediate consequence of \cite[Theorem 2.9.2.4]{Ge-HK06a}.

3. This follows from \cite[Theorem 2.7]{HK-Ha-Ka04}.
\end{proof}

\medskip
\begin{corollary}\label{5.7}
Let $(R,\mathfrak m)$ be a one-dimensional local domain with $(R:\widehat{R})\not=\{0\}$.
\begin{enumerate}
\item If $\widehat{R}$ is a semilocal principal ideal domain and $|\max(\widehat{R})|\leq |R/\mathfrak m|$, then $R^{\bullet}$ is ring-like.

\smallskip
\item If $R$ is Noetherian, then $R^{\bullet}$ is ring-like.

\smallskip
\item If $R$ is Noetherian and $|\max(\widehat{R})|\leq |R/\mathfrak m|<\infty$, then $R^{\bullet}$ is strongly ring-like.
\end{enumerate}
\end{corollary}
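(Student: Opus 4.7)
The plan is to verify the defining conditions of (strongly) ring-like in Definition \ref{5.4} by reducing each part to Lemma \ref{5.2} and Proposition \ref{5.6}. In every case I first need to establish that $R^{\bullet}$ is finitely primary, then invoke Proposition \ref{5.6}.3 to pick a suitable exponent $\alpha$ and prime representatives $\{p_i\mid i\in[1,s]\}$ of $\widehat{R}$, and finally check the appropriate alternative concerning $\widehat{R}^{\times}/R^{\times}$ or the smallest element of $V(\mathfrak{m}^{\bullet})$.

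For part 1, the hypothesis that $\widehat{R}$ is a semilocal principal ideal domain together with $(R\colon\widehat{R})\neq\{0\}$ and $R$ being one-dimensional local lets me apply Lemma \ref{5.2}.1 to conclude $R^{\bullet}$ is finitely primary. The inequality $|\max(\widehat{R})|\leq|R/\mathfrak{m}|$ is precisely the hypothesis of Proposition \ref{5.6}.1, so $V(\mathfrak{m}^{\bullet})$ admits a smallest element, and the ring-like condition is satisfied.

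For parts 2 and 3, I would first note that a one-dimensional local Noetherian domain with nonzero conductor is Mori, so Lemma \ref{5.2}.2 yields that $R^{\bullet}$ is finitely primary (which in particular forces $\widehat{R}$ to be a semilocal principal ideal domain, as is also automatic from Krull--Akizuki in this setting). For part 2 I split on whether the residue field $R/\mathfrak{m}$ is finite. If it is finite, Proposition \ref{5.6}.2 (implication (b)$\Rightarrow$(a)) yields that $\widehat{R}^{\times}/R^{\times}$ is finite; if it is infinite, then since $\widehat{R}$ is semilocal the bound $|\max(\widehat{R})|<\infty\leq|R/\mathfrak{m}|$ holds, and Proposition \ref{5.6}.1 supplies a smallest element of $V(\mathfrak{m}^{\bullet})$. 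In either case one of the two alternatives of Definition \ref{5.4} is fulfilled, so $R^{\bullet}$ is ring-like. For part 3 both alternatives apply simultaneously: the finiteness $|R/\mathfrak{m}|<\infty$ together with Noetherianity gives $\widehat{R}^{\times}/R^{\times}$ finite via Proposition \ref{5.6}.2, and $|\max(\widehat{R})|\leq|R/\mathfrak{m}|$ yields a smallest element of $V(\mathfrak{m}^{\bullet})$ via Proposition \ref{5.6}.1, so $R^{\bullet}$ is strongly ring-like.

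There is no genuine obstacle: the corollary is an immediate packaging of Lemma \ref{5.2} and Proposition \ref{5.6}. The only point requiring a moment's care is in parts 2 and 3, where the Noetherian hypothesis must be seen to deliver the finitely primary structure without assuming a finite residue field — this is handled by passing through the Mori property and applying Lemma \ref{5.2}.2 rather than Lemma \ref{5.2}.1.
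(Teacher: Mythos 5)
Your proposal is correct and follows the paper's own proof essentially verbatim: both derive that $R^{\bullet}$ is finitely primary from Lemma \ref{5.2}, reduce the ring-like condition to the two alternatives of Definition \ref{5.4} via Proposition \ref{5.6}.3, and then invoke Proposition \ref{5.6}.1 and \ref{5.6}.2 as appropriate. The only cosmetic difference is in part 2, where you split cases on whether $R/\mathfrak m$ is finite while the paper assumes $\widehat R^\times/R^\times$ is infinite and derives the complementary alternative; the two case analyses are logically interchangeable.
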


\begin{proof}
1. This follows from Proposition \ref{5.6}.

2. By Lemma \ref{5.2}, $R^{\bullet}$ is a finitely primary monoid. By Proposition \ref{5.6}.3 it is sufficient to show that $\widehat{R}^{\times}/R^{\times}$ is finite or $V(\mathfrak m^{\bullet})$ has a smallest element. Let $\widehat{R}^{\times}/R^{\times}$ be infinite. It follows from Proposition \ref{5.6}.2 that $R/\mathfrak m$ is infinite. Since $\widehat{R}$ is semilocal we have $|\max(\widehat{R})|\leq |R/\mathfrak m|$, and thus $V(\mathfrak m^{\bullet})$ has a smallest element by Proposition \ref{5.6}.1.

3. It is an immediate consequence of Proposition \ref{5.6} that $R^{\bullet}$ is strongly ring-like.
\end{proof}

The next example shows that Corollary \ref{5.7}.3 does not hold true for Mori domains.

\medskip
\begin{example}\label{5.8}
There are one-dimensional local seminormal Mori domains $R$ with $(R:\widehat{R})\not=\{0\}$ satisfying the following properties:
\begin{itemize}
\item $\widehat{R}$ is a discrete valuation domain,
\item $|\max(\widehat{R})|\leq |R/\mathfrak m|<\infty$ and $\widehat{R}^{\times}/R^{\times}$ is infinite,
\item $R$ is integrally closed or $\overline{R}=\widehat{R}$,
\item $R^{\bullet}$ is finitely primary but not strongly ring-like.
\end{itemize}
\end{example}

\begin{proof}
Let $K$ be a finite field and $L$ an infinite extension field of $K$. Let $X$ be an indeterminate over $L$. Set $R=K+XL[\![X]\!]$ and $\mathfrak m=XL[\![X]\!]$. Then $\mathfrak m$ is the unique nonzero prime ideal of $R$. In particular, $R$ is one-dimensional and local. Clearly, $\widehat{R}=L[\![X]\!]$ is a discrete valuation domain. Observe that $R$ is seminormal, hence $R$ is a Mori domain and $(R:\widehat{R})\not=\{0\}$. We obtain that $R^{\bullet}$ is finitely primary. Observe that $R/\mathfrak m\cong K$, and thus $|\max(\widehat{R})|=1\leq |R/\mathfrak m|=|K|<\infty$. Since $K$ is finite and $L$ is infinite, it follows that $\widehat{R}^{\times}/R^{\times}\cong L^{\times}/K^{\times}$ is infinite. Therefore, $R^{\bullet}$ is not strongly ring-like.

\smallskip
\noindent
CASE 1: \, $L/K$ is purely transcendental (e.g., $L=K(Y)$ for some indeterminate $Y$ over $K$). It is easy to prove that $R$ is integrally closed.

\smallskip
\noindent
CASE 2: \, $L/K$ is an algebraic field extension (e.g., $L$ is an algebraic closure of $K$). Observe that $\overline{R}=\widehat{R}$.
\end{proof}

\medskip
\begin{proposition}\label{5.9}
Let $R$ be a Mori domain such that $(R:\widehat{R})\not=\{0\}$ and $\mathfrak p\in\mathfrak{X}(R)$.
\begin{enumerate}
\item $R_\mathfrak p^{\bullet}$ is a finitely primary monoid.

\smallskip
\item For all $\mathfrak m\in {\rm spec}(\widehat{R})$ with $\mathfrak m\cap R=\mathfrak p$ it follows that $\mathfrak m\in\mathfrak{X}(\widehat{R})$. In particular, $\{\mathfrak m\in {\rm spec}(\widehat{R})\mid\mathfrak m\cap R=\mathfrak p\}=\{\mathfrak m\in\mathfrak{X}(\widehat{R})\mid\mathfrak m\cap R=\mathfrak p\}$.

\smallskip
\item If $\mathfrak p\supset (R:\widehat{R})$, then $R_\mathfrak p^{\bullet}$ is strongly ring-like of rank at most two if and only if $R_\mathfrak p$ is Noetherian, $|R/\mathfrak p|<\infty$ and $|\{\mathfrak m\in\mathfrak{X}(\widehat{R})\mid\mathfrak m\cap R=\mathfrak p\}|\leq 2$.
\end{enumerate}
\end{proposition}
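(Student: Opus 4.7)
The plan is to reduce all three parts to properties of the localization $R_\mathfrak p$, which I will show is a one-dimensional local Mori domain with nonzero conductor. Once that is in hand, the structural results for finitely primary monoids in Lemma \ref{5.2} and the ring-like characterizations in Proposition \ref{5.6} and Corollary \ref{5.7} can be applied directly. The engine throughout is that in the Mori setting with $(R : \widehat R) \neq \{0\}$, the complete integral closure $\widehat R$ is a divisorial fractional $R$-ideal, so localizations interact well with $\widehat{\cdot}$ and with the conductor.

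For part 1, I first note that $\mathfrak p \in \mathfrak{X}(R)$ makes $R_\mathfrak p$ a one-dimensional local domain with maximal ideal $\mathfrak p R_\mathfrak p$, and that it is Mori as a localization of the Mori domain $R$. The key step is to verify that $(R_\mathfrak p : \widehat{R_\mathfrak p}) \neq \{0\}$. Using $\widehat{R_\mathfrak p} = (R \setminus \mathfrak p)^{-1}\widehat R$ in this setting, any nonzero $c \in (R : \widehat R)$ yields $c \widehat{R_\mathfrak p} \subset R_\mathfrak p$. Lemma \ref{5.2}.2 then gives that $R_\mathfrak p^{\bullet}$ is finitely primary of rank $|\mathfrak{X}(\widehat{R_\mathfrak p})|$.

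For part 2, let $\mathfrak m \in \spec(\widehat R)$ with $\mathfrak m \cap R = \mathfrak p$. Since $\mathfrak m$ is disjoint from $R \setminus \mathfrak p$, it survives in $\widehat{R_\mathfrak p} = (R \setminus \mathfrak p)^{-1}\widehat R$, and its image is a nonzero prime (nonzero since it contracts to $\mathfrak p \neq \{0\}$). By part 1 and Lemma \ref{5.2}.1, $\widehat{R_\mathfrak p}$ is a semilocal principal ideal domain, so every nonzero prime ideal of $\widehat{R_\mathfrak p}$ is a height-one maximal ideal. Hence the image of $\mathfrak m$ is maximal, which forces $\mathfrak m \in \mathfrak{X}(\widehat R)$. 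The ``In particular'' statement is immediate.

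For part 3, I will use that by part 2 and the localization correspondence, the rank $|\mathfrak{X}(\widehat{R_\mathfrak p})| = |\max(\widehat{R_\mathfrak p})|$ of the finitely primary monoid $R_\mathfrak p^{\bullet}$ coincides with $|\{\mathfrak m \in \mathfrak{X}(\widehat R) \mid \mathfrak m \cap R = \mathfrak p\}|$, and that $R_\mathfrak p / \mathfrak p R_\mathfrak p$ is the quotient field of $R/\mathfrak p$, which is finite iff $R/\mathfrak p$ is finite. For $(\Leftarrow)$, the three conditions give $|\max(\widehat{R_\mathfrak p})| \leq 2 \leq |R_\mathfrak p / \mathfrak p R_\mathfrak p| < \infty$ (any finite field has at least two elements), so Corollary \ref{5.7}.3 yields that $R_\mathfrak p^{\bullet}$ is strongly ring-like. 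For $(\Rightarrow)$, being strongly ring-like gives that $\widehat{R_\mathfrak p}^{\times}/R_\mathfrak p^{\times}$ is finite, so Proposition \ref{5.6}.2 shows that either $R_\mathfrak p$ is a DVR or $R_\mathfrak p$ is Noetherian with finite residue field. The DVR case would force $R_\mathfrak p = \widehat{R_\mathfrak p}$, which combined with the equivalence $R_\mathfrak p = \widehat{R_\mathfrak p} \iff (R : \widehat R) \not\subset \mathfrak p$ (valid because $\widehat R$ is a divisorial fractional $R$-ideal) contradicts the hypothesis $\mathfrak p \supset (R : \widehat R)$. Hence $R_\mathfrak p$ is Noetherian with $|R/\mathfrak p| < \infty$, and the rank bound gives the third condition via part 2.

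The main obstacle I anticipate is the precise identification $\widehat{R_\mathfrak p} = (R \setminus \mathfrak p)^{-1}\widehat R$ and the equivalence $R_\mathfrak p = \widehat{R_\mathfrak p} \iff (R : \widehat R) \not\subset \mathfrak p$ needed in part 3 to rule out the DVR case. Both rest on $\widehat R$ being a divisorial fractional $R$-ideal in the Mori setting with nonzero conductor, and will need to be verified carefully before being invoked.
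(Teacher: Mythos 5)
Your proposal is correct and follows essentially the same route as the paper: reduce to $R_\mathfrak p$, identify $\widehat{R_\mathfrak p}=\widehat R_\mathfrak p$ and verify the nonzero conductor for $R_\mathfrak p$, apply Lemma \ref{5.2} for part 1, use that $\widehat{R_\mathfrak p}$ is a semilocal PID plus the localization correspondence for part 2, and for part 3 combine the bijection between $\mathfrak X(\widehat{R_\mathfrak p})$ and $\{\mathfrak m\in\mathfrak X(\widehat R)\mid\mathfrak m\cap R=\mathfrak p\}$ with Proposition \ref{5.6} and Corollary \ref{5.7}. The one genuine stylistic divergence is your exclusion of the DVR case in the forward direction of part 3: you derive it directly from the hypothesis $\mathfrak p\supset(R:\widehat R)$ via the localization identity $(R:\widehat R)_\mathfrak p=(R_\mathfrak p:\widehat{R_\mathfrak p})$ (which rests, as you flag, on $\widehat R$ being a fractional divisorial $R$-ideal), whereas the paper dispatches this step by citing \cite[Theorem 2.6.5.3]{Ge-HK06a}; your version is more self-contained and arguably clearer, since being strongly ring-like by itself does not rule out the DVR case -- only the conductor hypothesis does. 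The finiteness transfer between $R/\mathfrak p$ and $R_\mathfrak p/\mathfrak p R_\mathfrak p$ is handled slightly differently (you invoke the quotient-field relationship, the paper splits on $\mathfrak p$ maximal), but these are equivalent observations.
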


\begin{proof}
Clearly, $R_\mathfrak p$ is a one-dimensional local Mori domain, $\widehat{R_\mathfrak p}=\widehat{R}_\mathfrak p$ (since $\widehat{R}$ is a Krull domain) and $\{0\}\not=(R:\widehat{R})\subset (R_\mathfrak p:\widehat{R_\mathfrak p})$. If $\mathfrak{m}\in {\rm spec}(\widehat{R})$, then set $\mathfrak{m}_{\mathfrak{p}}=(R\setminus\mathfrak{p})^{-1}\mathfrak{m}$.

\smallskip
1. It follows from Lemma \ref{5.2}.1 that $R_\mathfrak p^{\bullet}$ is finitely primary.

\smallskip
2. Let $\mathfrak m\in {\rm spec}(\widehat{R})$ with $\mathfrak m\cap R=\mathfrak p$. Since $\widehat{R}$ is a Krull domain, there is some $\mathfrak q\in\mathfrak{X}(\widehat{R})$ such that $\mathfrak q\subset\mathfrak m$. This implies that $\{0\}\not=\mathfrak q_\mathfrak p\subset\mathfrak m_\mathfrak p$. Since $\widehat{R}_\mathfrak p$ is a semilocal principal ideal domain it follows that $\mathfrak q_\mathfrak p=\mathfrak m_\mathfrak p$, and thus $\mathfrak q=\mathfrak m$. We infer that $\mathfrak m\in\mathfrak{X}(\widehat{R})$.

3. Let $\mathfrak p\supset (R:\widehat{R})$. Note that $\widehat{R_\mathfrak p}$ is a semilocal principal ideal domain. We infer by 2 that $\mathfrak{X}(\widehat{R_\mathfrak p})={\rm spec}(\widehat{R_\mathfrak p})\setminus\{(0)\}={\rm spec}(\widehat{R}_\mathfrak p)\setminus\{(0)\}=\{\mathfrak m_\mathfrak p\mid\mathfrak m\in {\rm spec}(\widehat{R})\setminus\{(0)\},\mathfrak m\cap R\subset\mathfrak p\}=\{\mathfrak m_\mathfrak p\mid\mathfrak m\in {\rm spec}(\widehat{R}),\mathfrak m\cap R=\mathfrak p\}=\{\mathfrak m_\mathfrak p\mid\mathfrak m\in\mathfrak{X}(\widehat{R}),\mathfrak m\cap R=\mathfrak p\}$. Note that $\varphi:\{\mathfrak m\in\mathfrak{X}(\widehat{R})\mid\mathfrak m\cap R=\mathfrak p\}\rightarrow\{\mathfrak m_\mathfrak p\mid\mathfrak m\in\mathfrak{X}(\widehat{R}),\mathfrak m\cap R=\mathfrak p\}$ defined by $\varphi(\mathfrak{m})=\mathfrak m_\mathfrak p$ for all $\mathfrak m\in\mathfrak{X}(\widehat{R})$ with $\mathfrak m\cap R=\mathfrak p$ is a bijection. Consequently, $|\mathfrak{X}(\widehat{R_\mathfrak p})|=|\{\mathfrak m\in\mathfrak{X}(\widehat{R})\mid\mathfrak m\cap R=\mathfrak p\}|$.

First suppose that $R_\mathfrak p^{\bullet}$ is strongly ring-like of rank at most two. Then $\widehat{R}_\mathfrak p^{\times}/R_\mathfrak p^{\times}$ is finite. It follows by \cite[Theorem 2.6.5.3]{Ge-HK06a} that $R_\mathfrak p$ is not a discrete valuation domain. Therefore, we obtain by Proposition \ref{5.6} that $R_\mathfrak p$ is Noetherian and $|R_\mathfrak p/\mathfrak p_\mathfrak p|<\infty$. Since $f:R/\mathfrak p\rightarrow R_\mathfrak p/\mathfrak p_\mathfrak p$ defined by $f(x+\mathfrak p)=x+\mathfrak p_\mathfrak p$ for all $x\in R$ is a ring monomorphism, we have $|R/\mathfrak p|<\infty$. We infer that $|\{\mathfrak m\in\mathfrak{X}(\widehat{R})\mid\mathfrak m\cap R=\mathfrak p\}|=|\mathfrak{X}(\widehat{R_\mathfrak p})|=|\mathfrak{X}(\widehat{R_\mathfrak p}^{\bullet})|\leq 2$.

Conversely suppose that $R_\mathfrak p$ is Noetherian, $|R/\mathfrak p|<\infty$ and $|\{\mathfrak m\in\mathfrak{X}(\widehat{R})\mid\mathfrak m\cap R=\mathfrak p\}|\leq 2$. Then $\mathfrak p$ is a maximal ideal of $R$ whence $R/\mathfrak p \cong R_{\mathfrak p}/\mathfrak p_{\mathfrak p}$. Thus we obtain that have
\[
|\max(\widehat{R_\mathfrak p})|=|\mathfrak{X}(\widehat{R_\mathfrak p}^{\bullet})|=|\mathfrak{X}(\widehat{R_\mathfrak p})|=|\{\mathfrak m\in\mathfrak{X}(\widehat{R})\mid\mathfrak m\cap R=\mathfrak p\}|\leq 2 \le |R_{\mathfrak p}/\mathfrak p_{\mathfrak p}| = |R/\mathfrak p|<\infty \,.
\]
Therefore, it follows from Corollary \ref{5.7} that $R_\mathfrak p^{\bullet}$ is strongly ring-like. Clearly, $R_\mathfrak p^{\bullet}$ is of rank at most two.
\end{proof}

Our next goal is to show that for strongly ring-like monoids of rank at most the weak successive distance $\delta_w (\cdot)$ is finite (see Proposition \ref{5.12}).

\medskip
\begin{lemma}\label{x.vo2}
Let $H$ be a ring-like monoid of rank $s$ and exponent $\alpha$ and $\{p_i\mid i\in [1,s]\}$ a system of representatives of the prime elements of $\widehat H$ such that $\alpha$ and $\{p_i\mid i\in [1,s]\}$ are suitably chosen and such that either $s\geq 2$ or $\widehat{H}^{\times}/H^{\times}$ is finite.
\begin{enumerate}
\item If $u\in H$, $i\in [1,s]$ and $\mathsf v_{p_i}(u)\geq 2\alpha$, then $u\in\mathcal{A}(H)$ if and only if $p_iu\in\mathcal{A}(H)$.

\smallskip
\item There are some $M_1,C_1\in\mathbb{N}$ such that for each $a\in H$ and all adjacent $k, \ell\in\mathsf L(a)$ such that $\max\{k, \ell\}+M_1\leq\max\mathsf L(a)$ it follows that ${\rm Dist}(\mathsf Z_k(a),\mathsf Z_{\ell}(a))\leq C_1$.
\end{enumerate}
\end{lemma}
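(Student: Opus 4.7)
The plan is to handle Part~1 by a direct valuation argument using the defining property of ring-like monoids, and then to attack Part~2 by extracting a bounded ``exchange block'' from any short factorization of $a$, using Part~1 together with the hypothesis that $s\geq 2$ or $\widehat H^\times/H^\times$ is finite.

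For Part~1, assume $u\in\mathcal{A}(H)$ with $\mathsf v_{p_i}(u)\geq 2\alpha$. Since $\mathsf v_{p_i}(u)\geq\alpha$, the defining property gives $p_iu\in H$, and it is a non-unit because $\mathsf v_{p_i}(p_iu)\geq 2\alpha+1$. If $p_iu=ab$ with $a,b\in H\setminus H^\times$, then the containment $H\setminus H^\times\subset p_1\cdots p_sF$ forces $\mathsf v_{p_i}(a),\mathsf v_{p_i}(b)\geq 1$, and $\mathsf v_{p_i}(a)+\mathsf v_{p_i}(b)=1+\mathsf v_{p_i}(u)\geq 2\alpha+1$ gives, say, $\mathsf v_{p_i}(a)\geq\alpha+1$. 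Setting $c:=ap_i^{-1}\in\widehat H$ yields $\mathsf v_{p_i}(c)\geq\alpha$, and applying the defining property to $a=p_ic$ gives $c\in H$. Then $u=cb$ in $H$; atomicity of $u$ and $b\notin H^\times$ force $c\in H^\times$, so $\mathsf v_{p_i}(a)=1$, contradicting $\mathsf v_{p_i}(a)\geq\alpha+1\geq 2$. The converse is symmetric: if $u=ab$ non-trivially and $\mathsf v_{p_i}(u)\geq 2\alpha$, then WLOG $\mathsf v_{p_i}(a)\geq\alpha$; the defining property gives $p_ia\in H$ as a non-unit, and $p_iu=(p_ia)b$ contradicts the atomicity of $p_iu$.

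For Part~2 the numerical setup is as follows. By Lemma~\ref{5.1}, $D:=\max\Delta(H)<\infty$ and $H$ is BF. Every non-unit of $H$ has $\mathsf v_{p_j}\geq 1$ in each coordinate, so the length of any factorization of $a$ is at most $\min_j\mathsf v_{p_j}(a)$; writing $N:=\max\mathsf L(a)$, this gives $\mathsf v_{p_j}(a)\geq N$ for every $j$. Given adjacent $k<\ell=k+d$ in $\mathsf L(a)$ with $d\leq D$ and $\ell+M_1\leq N$, and any $z=v_1\cdots v_k\in\mathsf Z_k(a)$, the total $p_j$-excess $\sum_i(\mathsf v_{p_j}(v_i)-1)=\mathsf v_{p_j}(a)-k\geq N-k\geq M_1$ is large in every coordinate. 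I will locate a bounded subset of atoms in $z$ whose product admits a refactorization of length precisely $d$ greater and replace that subset, yielding $z''\in\mathsf Z_\ell(a)$ with $\mathsf d(z,z'')\leq C_1$; the reverse direction of $\mathrm{Dist}(\mathsf Z_k(a),\mathsf Z_\ell(a))$ is handled symmetrically.

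The main obstacle is uniformity of the exchange block: since $k$ may be close to $N$, no single atom of $z$ need have $p_i$-valuation $\geq 2\alpha$, so Part~1 is not applicable to a single atom of $z$ out of the box. Here Part~1 is still essential because it lets one transfer $p_i$-powers through atoms of $p_i$-valuation $\geq 2\alpha$. The extra hypothesis splits the remaining work into two sub-cases. In the case where $\widehat H^\times/H^\times$ is finite, there are only finitely many atom classes with all coordinates below any fixed bound, so a pigeonhole argument on the $k$ atoms of $z$ --- whose total $p_i$-valuation is $\geq N\geq k+M_1$ --- produces either an atom with $\mathsf v_{p_i}\geq 2\alpha$, which Part~1 handles, or a sufficiently long power of some repeated small atom, whose set of lengths is controlled and contains lengths differing by $d$. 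In the rank $s\geq 2$ case with minimum non-unit value $\mathfrak m$, one pairs two atoms of $z$ with excess valuation at distinct primes $p_i\neq p_j$ and applies Part~1 coordinate by coordinate to redistribute $p_i$'s and $p_j$'s, adjusting the atom count by any prescribed amount up to $D$. In both sub-cases the block size, and hence $M_1$ and $C_1$, depend only on $\alpha$, $s$, $D$, and the ring-like data of $H$, so the bounds are uniform in $a$.
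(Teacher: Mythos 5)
Your Part~1 is correct and is a nice self-contained argument where the paper simply cites \cite[Proposition 4.4]{Ha09c}. Both directions are handled cleanly: from $u\in\mathcal A(H)$ you correctly deduce $p_iu\in H$ as a non-unit, and in a nontrivial decomposition $p_iu=ab$ the ring-like property lets you pull $p_i$ out of the factor with $\mathsf v_{p_i}\ge\alpha+1$ and reach a contradiction via $H^\times\subset\widehat H^\times$; the converse is a symmetric pull-up through the ring-like property.

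Part~2, however, is a sketch with a genuine gap rather than a proof, and it diverges from the paper's route in a way that leaves the hard step unaddressed. The paper splits into $s=1$ and $s\ge2$. For $s=1$ the hypothesis forces $\widehat H^\times/H^\times$ finite, whence $H_{\red}$ is finitely generated (Remark~\ref{5.3}.2) and Theorem~\ref{3.1} gives the stronger conclusion $\delta(H)<\infty$. For $s\ge2$ the key input is Hassler's \cite[Theorem 4.14]{Ha09c}, which already yields bounded ${\rm Dist}(\mathsf Z_k(a),\mathsf Z_\ell(a))$ for adjacent $k,\ell$ in the two-sided range $\min\mathsf L(a)+M_1\le\min\{k,\ell\}\le\max\{k,\ell\}\le\max\mathsf L(a)-M_1$; combined with $\min\mathsf L(a)\le2\alpha$ (which absorbs the low end into the constant $C_1$), this gives the one-sided statement. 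Your proposal does not recover this. Concretely, the pigeonhole step is unsound as stated: for $z=v_1\cdots v_k\in\mathsf Z_k(a)$ the $p_i$-excess $\mathsf v_{p_i}(a)-k\ge M_1+d$ is spread over $k$ atoms, and when $k$ is large no single atom need have $\mathsf v_{p_i}\ge2\alpha$ --- and it is precisely this regime that is difficult. The fallback claim, that a long power of a repeated small atom has ``controlled'' lengths and (implicitly) bounded ${\rm Dist}$ between its length-$m$ and length-$(m+d)$ factorizations, is exactly what must be proved and is not argued. Likewise, for $s\ge2$, the plan to ``pair two atoms with excess at distinct primes and redistribute, adjusting the atom count by any prescribed $d\le D$'' does not explain why such a pair exists in a general $z$, nor why a bounded exchange block can change the length by exactly $d$ with bounded distance; that is essentially the content of \cite[Theorem 4.14]{Ha09c} and does not follow from Part~1 alone. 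As written, Part~2 is a plausible road map, not a proof.
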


\begin{proof}
1. This follows from \cite[Proposition 4.4]{Ha09c}.

\smallskip
2. We distinguish two cases.

\smallskip
\noindent
CASE 1: \, $s=1$.

Then $H_{\red}$ is finitely generated by Remark \ref{5.3}.2.
 Thus Theorem \ref{3.1} implies that $\delta (H) < \infty$ whence the assertion follows.

\smallskip
\noindent
CASE 2: \, $s\geq 2$.

We infer by \cite[Theorem 3.1.5]{Ge-HK06a} that $\min \mathsf L(a)\leq 2\alpha$ for all $a\in H$. It follows by \cite[Theorem 4.14]{Ha09c} that there are some $M_1,C_2\in\mathbb{N}$ such that for all $a\in H$ and all adjacent $k, \ell\in\mathsf L(a)$ for which $\min\mathsf L(a)+M_1\leq\min\{k,\ell\}\leq\max\{k,\ell\}\leq\max\mathsf L(a)-M_1$ it follows that ${\rm Dist}(\mathsf Z_k(a),\mathsf Z_{\ell}(a))\leq C_2$. By Lemma \ref{5.1}, $\Delta(H)$ is finite and nonempty. Set $C_1=\max\{C_2,M_1+2\alpha+\max \Delta(H) \}$. Let $a\in H$ and $k, \ell\in\mathsf L(a)$ be adjacent such that $\max\{k,\ell\}+M_1\leq\max\mathsf L(a)$.

CASE 2.1: \, $\min\{k,\ell\}\geq M_1+2\alpha$. Obviously, $\min\mathsf L(a)+M_1\leq\min\{k,\ell\}$, and thus ${\rm Dist}(\mathsf Z_k(a),\mathsf Z_{\ell}(a))\leq C_2\leq C_1$.

CASE 2.2: \, $\min\{k,\ell\}<M_1+2\alpha$. It is easy to see that ${\rm Dist}(\mathsf Z_k(a),\mathsf Z_{\ell} (a))\leq\max\{k,\ell\}\leq\min\{k,\ell\}+\max \Delta(H) \leq M_1+2\alpha+\max \Delta(H) \leq C_1$.
\end{proof}

\medskip
\begin{lemma}\label{x.vo3}
Let $H$ be a strongly ring-like monoid of rank $2$ and exponent $\alpha$, $\{p_1,p_2\}$ a system of representatives of the prime elements of $\widehat{H}$ such that $\alpha$ and $\{p_1,p_2\}$ are suitably chosen and $(\mu_1,\mu_2)$ the smallest element of $\{(\mathsf v_{p_1}(a),\mathsf v_{p_2}(a))\mid a\in H\setminus H^{\times}\}$. Set $\mathbb{A}(H)=\{q\in\mathcal{A}(H)\mid\mathsf v_{p_1}(q)\leq 2\alpha,\mathsf v_{p_2}(q)\leq 2\alpha\}$.
\begin{enumerate}
\item For every $a\in H$ we have $\min\{\mathsf v_{p_i}(a)-\mu_i\max\mathsf L(a)\mid i\in\{1,2\}\}<\alpha$.

\smallskip
\item There are $L,E\in\mathbb{N}$ such that for all $u\in\mathbb{A}(H)$, $r\in\mathbb{N}$ and $(u_i)_{i=1}^r\in\mathcal{A}(H)^r$ with $\mathsf v_{p_1}(u)=\mu_1$, $|\{i\in [1,r]\mid\mathsf v_{p_1}(u_i)=\mu_1\}|>L$ and $\max\{\mathsf v_{p_2}(u_i)\mid i\in [1,r]\}>E$ it follows that $u\mid_H\prod_{i=1}^r u_i$ and $r-1\in\mathsf L(u^{-1}\prod_{i=1}^r u_i)$.

\smallskip
\item For every $N\in\mathbb{N}$ there is some $D\in\mathbb{N}$ such that for all $a\in H$ and $k,\ell\in\mathsf L(a)$ for which $\min\{k,\ell\}+N\geq\max\mathsf L(a)$ it follows that $\mathsf d(\mathsf Z_k(a),\mathsf Z_\ell(a))\leq D|\ell-k|$.
\end{enumerate}
\end{lemma}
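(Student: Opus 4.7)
The proof addresses the three assertions in order, each using the finitely primary structure together with the strongly-ring-like hypotheses in an increasingly intricate way.

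For part 1, I argue by contradiction: set $k=\max\mathsf L(a)$ and assume $\mathsf v_{p_i}(a)-\mu_i k\ge\alpha$ for both $i\in\{1,2\}$. Choose an atom $q\in\mathcal A(H)$ with $\mathsf v(q)=(\mu_1,\mu_2)$; such a $q$ exists because, by definition of $(\mu_1,\mu_2)$, some non-unit $b\in H$ has $\mathsf v(b)=(\mu_1,\mu_2)$, and any atomic divisor of $b$ must itself have valuation $(\mu_1,\mu_2)$ by minimality. Then $aq^{-k}\in F$ has $\mathsf v_{p_i}(aq^{-k})\ge\alpha$ for $i=1,2$, so $aq^{-k}\in(p_1p_2)^\alpha F\subset H$; being a non-unit (both valuations are positive), it factors into at least one atom. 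Concatenating with $k$ copies of $q$ yields a factorization of $a$ of length $\ge k+1$, contradicting the choice of $k$.

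For part 2, I set $L=\lceil\alpha/\mu_1\rceil+1$ and $E=3\alpha+1$ (any larger choices also work). Direct valuation estimates using $\mathsf v_{p_1}(u)=\mu_1$, the $>L$ indices with $\mathsf v_{p_1}(u_i)=\mu_1$, and $\mathsf v_{p_2}(u_{i_1})>E$ yield $\mathsf v_{p_i}(u^{-1}\prod_j u_j)\ge\alpha$ for both $i$, so $c:=u^{-1}\prod_j u_j\in(p_1p_2)^\alpha F\subset H$, which proves $u\mid_H\prod_j u_j$. For the length-$(r-1)$ factorization of $c$, pick $L$ distinct indices $j_1,\ldots,j_L\in[1,r]\setminus\{i_1\}$ with $\mathsf v_{p_1}(u_{j_m})=\mu_1$ and define $Q=u^{-1}u_{j_1}\cdots u_{j_L}u_{i_1}$; the same estimate places $Q$ in $H$, and we have the decomposition $c=Q\cdot\prod_{i\notin\{j_1,\ldots,j_L,i_1\}}u_i$. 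The crux is to factor $Q$ into exactly $L$ atoms of $H$: since $\mathsf v_{p_2}(u_{i_1})>2\alpha$, Lemma~\ref{x.vo2}.1 lets me shift $p_2$-powers between $u_{i_1}$ and a free factor while preserving atomicity, and absorbing the excess together with the valuation discrepancy between $u$ and one $u_{j_m}$ produces the desired $L$ atoms (the first $L-1$ being the remaining $u_{j_m}$'s untouched, the last being a suitably shifted version of $u_{i_1}$). Concatenating with the $r-L-1$ untouched $u_i$'s yields the sought length-$(r-1)$ factorization of $c$. The main obstacle is the bookkeeping certifying that every produced factor is a genuine atom in $H$; here the finite index $[\widehat H^\times:H^\times]<\infty$ from the strongly-ring-like definition is essential to handle unit ambiguities.

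For part 3, fix $N\in\mathbb N$ and distinguish two regimes by the size of $M:=\max\mathsf L(a)$. Choose a threshold $K$ depending on $N,\alpha,\mu_1,\mu_2,L,E$. If $M\le K$, then $\mathsf d(\mathsf Z_k(a),\mathsf Z_\ell(a))\le M\le K$, handling this regime with any $D\ge K$. Otherwise $M>K$; WLOG by part 1, $\mathsf v_{p_1}(a)-\mu_1 M<\alpha$. Then any factorization of $a$ of length $m\ge M-N$ has total $p_1$-excess $<\mu_1 N+\alpha$, so more than $L$ of its atoms satisfy $\mathsf v_{p_1}=\mu_1$; a pigeonhole argument on the $p_2$-excess produces some atom with $\mathsf v_{p_2}>E$ once $K$ is large enough. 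Thus the hypotheses of part 2 apply uniformly to every factorization in this top regime. I then construct a chain $z_\ell,z_{\ell-1},\ldots,z_k$ of factorizations of $a$, each obtained from the previous by a part-2 swap followed by merging the newly introduced atom $u$ with one of the produced atoms (specifically the shifted copy of $u_{i_1}$, using Lemma~\ref{x.vo2}.1 and its large $p_2$-valuation) into a single atom, reducing length by one per step with distance $\le 2L+2$. Summing over $\ell-k$ steps gives $\mathsf d(\mathsf Z_k(a),\mathsf Z_\ell(a))\le(2L+2)|\ell-k|$, so $D:=\max\{K,2L+2\}$ works. The main obstacle here is exactly this merge step, since part 2 naturally produces a length-$(r-1)$ factorization of $au^{-1}$ rather than of $a$; showing that $u$ combines with one of the new atoms to form a single atom of $H$ is the technical heart of the argument.
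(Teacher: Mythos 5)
Your part 1 is a valid direct argument (the paper just cites Hassler's Lemma~5.1), and your part 2 gestures at the same exchange property that the paper obtains wholesale from Hassler's Lemma~5.2, though the bookkeeping about which products of $p_i$-shifts and units remain atoms of $H$ is left far more implicit than it can afford to be.

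The serious problem is in part 3, where you depart from the paper's strategy and your argument does not close. Two gaps stand out. First, you claim that once $\max\mathsf L(a)$ exceeds a threshold $K$, a pigeonhole on the $p_2$-excess forces some atom in a near-maximal-length factorization to have $\mathsf v_{p_2}>E$. This is false: if $\mathsf v_{p_2}(a)$ is close to $\mu_2\max\mathsf L(a)$, then \emph{every} atom in such a factorization has $\mathsf v_{p_2}$ close to $\mu_2$, no matter how large $\max\mathsf L(a)$ is, and part~2 never becomes applicable. The paper handles exactly this regime separately: it collects all atoms with both valuations bounded by a fixed $E$ into a finite set $\mathbb B$, considers the finitely generated submonoid $H(\mathbb B)$, and invokes Theorem~\ref{3.1} together with Lemma~\ref{3.5} to get a uniform bound $C|\ell-k|$ there. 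Your proof has no substitute for this case. Second, your chain construction relies on ``merging the newly introduced atom $u$ with one of the produced atoms into a single atom'' to drop the length by one per step. Part~2 yields a common \emph{divisor} $u$ and a length-$(r-1)$ factorization of $u^{-1}a$, not a length-reducing rewriting of a factorization of $a$; and the product $uw$ of two atoms is generally not an atom (Lemma~\ref{x.vo2}.1 only controls multiplication by $p_i$, not by an atom $u=\epsilon p_1^{\mu_1}p_2^{\mathsf v_{p_2}(u)}$ with $\mu_1\ge 1$, and the unit $\epsilon$ is an additional obstruction). The paper avoids both issues by a cleaner induction on $\max\mathsf L(a)$: it produces factorizations of $a$ of lengths $k$ and $\ell$ each beginning with an atom $u_1$ (resp.\ $v_1$) in $\mathbb A(H)$ with $\mathsf v_{p_1}=\mu_1$, applies part~2 with $u=v_1$ to conclude $v_1\mid_H a$ and $k-1\in\mathsf L(v_1^{-1}a)$, and then transfers the bound from $v_1^{-1}a$ (which has strictly smaller $\max\mathsf L$) back to $a$ by multiplying the optimal factorizations by $v_1$. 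You would need to replace your merge step and supply the bounded-valuation case to make the argument work.
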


\begin{proof}
1. This follows from \cite[Lemma 5.1]{Ha09c}.

\smallskip
2. It follows from \cite[Lemma 5.2]{Ha09c} that there are some $L,E\in\mathbb{N}$ such that for all $u\in\mathbb{A}(H)$, $(u_i)_{i=1}^L\in\mathcal{A}(H)^L$ with $\mathsf v_{p_1}(u)=\mu_1$, $\mathsf v_{p_1}(u_i)=\mu_1$ for all $i\in [1,L]$ and $v\in\mathcal{A}(H)$ with $\mathsf v_{p_2}(v)\geq E$ it follows that $u\prod_{i=1}^L w_i=v\prod_{i=1}^L u_i$ for some $(w_i)_{i=1}^L\in\mathcal{A}(H)^L$. Now let $u\in\mathbb{A}(H)$, $r\in\mathbb{N}$ and $(u_i)_{i=1}^r\in\mathcal{A}(H)^r$ be such that $\mathsf v_{p_1}(u)=\mu_1$, $|\{i\in [1,r]\mid\mathsf v_{p_1}(u_i)=\mu_1\}|>L$ and $\max\{\mathsf v_{p_2}(u_i)\mid i\in [1,r]\}>E$. There are some $j\in [1,r]$ such that $\mathsf v_{p_2}(u_j)=\max\{\mathsf v_{p_2}(u_i)\mid i\in [1,r]\}$ and $I\subset [1,r]\setminus\{j\}$ such that $|I|=L$ and $\mathsf v_{p_1}(u_i)=\mu_1$ for all $i\in I$. Consequently, there is some $(w_i)_{i=1}^L\in\mathcal{A}(H)^L$ such that $u\prod_{i=1}^L w_i=u_j\prod_{i\in I} u_i$. This implies that $\prod_{i=1}^r u_i=u\prod_{i=1}^L w_i\prod_{i\in [1,r]\setminus (I\cup\{j\})} u_i$, and thus $u\mid_H\prod_{i=1}^r u_i$ and $r-1=L+r-(L+1)\in\mathsf L(u^{-1}\prod_{i=1}^r u_i)$.

\smallskip
3. Let $N\in\mathbb{N}$. Without restriction let $H$ be reduced. By 2 there are some $L_1,E_1\in\mathbb{N}$ such that for all $u\in\mathbb{A}(H)$, $r\in\mathbb{N}$ and $(u_i)_{i=1}^r\in\mathcal{A}(H)^r$ with $\mathsf v_{p_1}(u)=\mu_1$, $|\{i\in [1,r]\mid\mathsf v_{p_1}(u_i)=\mu_1\}|>L_1$ and $\max\{\mathsf v_{p_2}(u_i)\mid i\in [1,r]\}>E_1$ it follows that $u\mid_H\prod_{i=1}^r u_i$ and $r-1\in\mathsf L(u^{-1}\prod_{i=1}^r u_i)$.

Analogously, it follows by 2 that there are some $L_2,E_2\in\mathbb{N}$ such that for all $u\in\mathbb{A}(H)$, $r\in\mathbb{N}$ and $(u_i)_{i=1}^r\in\mathcal{A}(H)^r$ with $\mathsf v_{p_2}(u)=\mu_2$, $|\{i\in [1,r]\mid\mathsf v_{p_2}(u_i)=\mu_2\}|>L_2$ and $\max\{\mathsf v_{p_1}(u_i)\mid i\in [1,r]\}>E_2$ it follows that $u\mid_H\prod_{i=1}^r u_i$ and $r-1\in\mathsf L(u^{-1}\prod_{i=1}^r u_i)$.

Set $E=\max\{E_1,E_2,\mu_1(N+1)+\alpha,\mu_2(N+1)+\alpha\}$ and $\mathbb{B}=\{q\in\mathcal{A}(H)\mid\mathsf v_{p_1}(q)\leq E,\mathsf v_{p_2}(q)\leq E\}$. Let $H(\mathbb{B})$ be the submonoid of $H$ generated by $\mathbb{B}$. Note that $\mathcal{A}(H(\mathbb{B}))=\mathbb{B}$. Since $\mathbb{B}$ is finite we have $H(\mathbb{B})$ is (quasi) finitely generated. By Theorem \ref{3.1} and Lemma \ref{3.5}, there is some $C\in\mathbb{N}$ such that for all $a\in H(\mathbb{B})$ and all $k, \ell\in\mathsf L_{H(\mathbb{B})}(a)$ it follows that $\mathsf d(\mathsf Z_{H(\mathbb{B}),k}(a),\mathsf Z_{H(\mathbb{B}), \ell}(a))\leq C|\ell-k|$. Set $D=\max\{C,N(\mu_1+1)+L_1+\alpha,N(\mu_2+1)+L_2+\alpha\}$. It suffices to show by induction that for all $r\in\mathbb{N}_0$ and $a\in H$ such that $\max\mathsf L(a)=r$ we have for all $k, \ell\in\mathsf L(a)$ for which $\min\{k, \ell\}+N\geq\max\mathsf L(a)$ it follows that $\mathsf d(\mathsf Z_k(a),\mathsf Z_\ell(a))\leq D|\ell-k|$. Let $r\in\mathbb{N}_0$, $a\in H$ and $k, \ell\in\mathsf L(a)$ be such that $\max\mathsf L(a)=r$ and $\min\{k,\ell\}+N\geq\max\mathsf L(a)$. We have to show that $\mathsf d(\mathsf Z_k(a),\mathsf Z_{\ell}(a))\leq D|\ell - k|$. Without restriction let $k<\ell$. By 1 we can assume without restriction that $\mathsf v_{p_1}(a)-\mu_1\max\mathsf L(a)<\alpha$. If $\max\mathsf L(a)\leq D$, then $\mathsf d(\mathsf Z_k(a),\mathsf Z_{\ell}(a))\leq\max\{k,\ell\}\leq\max\mathsf L(a)\leq D\leq D|\ell - k|$. Now let $\max\mathsf L(a)>D$.

Next we show that for all $s\in\mathbb{N}_0$ and $(w_i)_{i=1}^s\in\mathcal{A}(H)^s$ with $a=\prod_{i=1}^s w_i$ and $s+N\geq\max\mathsf L(a)$ it follows that $|\{i\in [1,s]\mid\mathsf v_{p_1}(w_i)=\mu_1\}|>L_1$ and $\mathsf v_{p_1}(w_j)\leq\mu_1(N+1)+\alpha$ for all $j\in [1,s]$. Let $s\in\mathbb{N}_0$ and $(w_i)_{i=1}^s\in\mathcal{A}(H)^s$ be such that $a=\prod_{i=1}^s w_i$ and $s+N\geq\max\mathsf L(a)$. We have $|\{i\in [1,s]\mid\mathsf v_{p_1}(w_i)=\mu_1\}|=s-|\{i\in [1,s]\mid\mathsf v_{p_1}(w_i)\not=\mu_1\}|\geq s-\sum_{i=1}^s (\mathsf v_{p_1}(w_i)-\mu_1)=s-(\mathsf v_{p_1}(a)-s\mu_1)=s(\mu_1+1)-\mathsf v_{p_1}(a)>s(\mu_1+1)-\alpha-\mu_1\max\mathsf L(a)\geq (\max\mathsf L(a)-N)(\mu_1+1)-\alpha-\mu_1\max\mathsf L(a)=\max\mathsf L(a)-N(\mu_1+1)-\alpha>D-N(\mu_1+1)-\alpha\geq L_1$. Let $j\in [1,s]$. Then $\mathsf v_{p_1}(w_j)=\mathsf v_{p_1}(a)-\sum_{i=1,i\not=j}^s\mathsf v_{p_1}(w_i)\leq\mathsf v_{p_1}(a)-(s-1)\mu_1\leq\alpha+\mu_1\max\mathsf L(a)-(s-1)\mu_1=\mu_1(\max\mathsf L(a)-s+1)+\alpha\leq\mu_1(N+1)+\alpha$.

\smallskip
We continue with the following two assertions.
\begin{enumerate}
\item[{\bf A1.}\,] There are some $u_1\in\mathbb{A}(H)$ and $(u_i)_{i=2}^k\in\mathcal{A}(H)^{k-1}$ such that $\mathsf v_{p_1}(u_1)=\mu_1$ and $a=\prod_{i=1}^k u_i$.

\item[{\bf A2.}\,] There are some $v_1\in\mathbb{A}(H)$ and $(v_j)_{j=2}^{\ell}\in\mathcal{A}(H)^{\ell-1}$ such that $\mathsf v_{p_1}(v_1)=\mu_1$ and $a=\prod_{j=1}^{\ell} v_j$.
\end{enumerate}

{\it Proof of} \ {\bf A1.} and {\bf A2.} By symmetry it is sufficient to prove {\bf A1.}
Since $k+N\geq\max\mathsf L(a)$ there is some $(w_i)_{i=1}^k\in\mathcal{A}(H)^k$ such that $\mathsf v_{p_1}(w_1)=\mathsf v_{p_1}(w_2)=\mu_1$ and $a=\prod_{i=1}^k w_i$. It is obvious that $\mu_1\leq 2\alpha$. If $\mathsf v_{p_2}(w_1)\leq 2\alpha$ or $\mathsf v_{p_2}(w_2)\leq 2\alpha$, then $w_1\in\mathbb{A}(H)$ or $w_2\in\mathbb{A}(H)$ and we are done. Now suppose that $\mathsf v_{p_2}(w_1)>2\alpha$ and $\mathsf v_{p_2}(w_2)>2\alpha$. We have $w_1=\beta p_1^{\mu_1}p_2^t$ and $w_2=\gamma p_1^{\mu_1}p_2^s$ for some $\beta,\gamma\in\widehat{H}^{\times}$ with $t=\mathsf v_{p_2}(w_1)$ and $s=\mathsf v_{p_2}(w_2)$. Set $u_1=\beta p_1^{\mu_1}p_2^{2\alpha}$, $u_2=\gamma p_1^{\mu_1}p_2^{t+s-2\alpha}$ and $u_i=w_i$ for all $i\in [3,k]$. We infer by Lemma \ref{x.vo2}.1 that $u_1,u_2\in\mathcal{A}(H)$. It is clear that $u_1\in\mathbb{A}(H)$ and $a=\prod_{i=1}^k u_i$. \qed ({\bf A1} and {\bf A2})

\smallskip
\noindent
CASE 1: \, $\max(\{\mathsf v_{p_2}(u_i)\mid i\in [1,k]\}\cup\{\mathsf v_{p_2}(v_i)\mid i\in [1,\ell]\})\leq E_1$.

 Observe that $\{u_i\mid i\in [1,k]\}\cup\{v_j\mid j\in [1,\ell]\}\subset\mathbb{B}$. Therefore, we have $k,\ell\in\mathsf L_{H(\mathbb{B})}(a)$, and thus $\mathsf d(\mathsf Z_k(a),\mathsf Z_{\ell}(a))\leq\mathsf d(\mathsf Z_{H(\mathbb{B}),k}(a),\mathsf Z_{H(\mathbb{B}), \ell}(a))\leq C|\ell - k|\leq D|\ell - k|$.

\smallskip
\noindent
CASE 2: \, $\max\{\mathsf v_{p_2}(u_i)\mid i\in [1,k]\}>E_1$.

We have $k-1,\ell-1\in\mathsf L(v_1^{-1}a)$, $\max\mathsf L(v_1^{-1}a)<\max\mathsf L(a)$ and $\min\{k-1,\ell-1\}+N\geq\max\mathsf L(v_1^{-1}a)$. Therefore, it follows by the induction hypothesis that $\mathsf d(\mathsf Z_{k-1}(v_1^{-1}a),\mathsf Z_{\ell-1}(v_1^{-1}a))\leq D|(k-1)-(\ell-1)|=D|\ell - k|$. There are some $x\in\mathsf Z_{k-1}(v_1^{-1}a)$ and $y\in\mathsf Z_{\ell-1}(v_1^{-1}a)$ such that $\mathsf d(\mathsf Z_{k-1}(v_1^{-1}a),\mathsf Z_{\ell-1}(v_1^{-1}a))=\mathsf d(x,y)$. Clearly, $v_1x\in\mathsf Z_k(a)$ and $v_1y\in\mathsf Z_{\ell}(a)$. Consequently, $\mathsf d(\mathsf Z_k(a),\mathsf Z_{\ell}(a))\leq\mathsf d(v_1x,v_1y)=\mathsf d(x,y)\leq D|\ell - k|$.

\smallskip
\noindent
CASE 3: \, $\max\{\mathsf v_{p_2}(v_i)\mid i\in [1,\ell]\}>E_1$.

This follows by analogy with CASE 2.
\end{proof}

\medskip
\begin{proposition}\label{5.12}
Let $H$ be a strongly ring-like monoid of rank $s\le 2$. Then $\mathsf c_{\mon}(H)<\infty$ and $\delta_w(H)<\infty$.
\end{proposition}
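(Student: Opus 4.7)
The plan is to split into cases based on the rank $s$ of $H$. For $s=1$, by Remark~\ref{5.3}.2 the reduced monoid $H_{\red}$ is finitely generated, so Theorem~\ref{3.1} gives $\mathsf c_{\mon}(H) < \infty$ and $\delta(H) < \infty$, and then Lemma~\ref{3.5}.1 yields $\delta_w(H) \le \delta(H) < \infty$. This case is immediate and the real substance lies in the rank-two case, where $H$ is in general not finitely generated.

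For $s = 2$, I would first establish $\delta_w(H) < \infty$ by invoking Lemma~\ref{3.6}, whose two hypotheses have been engineered into the preceding lemmas: hypothesis (a) is exactly the conclusion of Lemma~\ref{x.vo2}.2 and hypothesis (b) is exactly Lemma~\ref{x.vo3}.3. Since $H$ satisfies the Structure Theorem for Sets of Lengths and has finite set of distances by Lemma~\ref{5.1}, Lemma~\ref{3.6} applies and gives $\delta_w(H) < \infty$. Combining this with the finiteness of $\Delta(H)$ via Lemma~\ref{3.5}.2 yields $\mathsf c_{\adj}(H) \le \delta_w(H) \max \Delta(H) < \infty$. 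In view of (\ref{basic2}), it then suffices to bound $\mathsf c_{\eq}(H)$ in order to conclude $\mathsf c_{\mon}(H) < \infty$.

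To bound $\mathsf c_{\eq}(H)$, I would use the $\C$-monoid character of $H$ (Remark~\ref{5.3}.1) together with the substitution trick encoded in Lemma~\ref{x.vo3}.2, which, at the cost of modifying $L$ atoms, replaces an atom of large $p_2$-coordinate by one of $p_2$-coordinate $2\alpha$ while preserving the total length. Iterating this move in both coordinates, following the inductive pattern of Lemma~\ref{x.vo3}.3, should reduce any equal-length pair of factorizations of $a$ to a pair lying in the finitely generated submonoid $H(\mathbb{B})$ generated by $\mathbb{B}=\{q \in \mathcal{A}(H) : \mathsf v_{p_1}(q), \mathsf v_{p_2}(q) \le E\}$ for a suitable bound $E$. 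On $H(\mathbb{B})$ Theorem~\ref{3.1} furnishes $\mathsf c_{\eq}(H(\mathbb{B})) < \infty$, and adding the uniformly bounded cost of the reduction produces a global bound on $\mathsf c_{\eq}(H)$. The main obstacle is arranging this reduction chain entirely inside $\mathsf Z(a)$ while preserving length at every step; this is exactly where the rank-two hypothesis is used, since in rank $\ge 3$, as Remark~\ref{5.3}.4 shows, there are not enough free coordinates to realize the required swaps and $\mathsf c_{\mon}$ can indeed be infinite.
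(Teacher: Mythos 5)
Your treatment of the $s=1$ case and the derivation of $\delta_w(H)<\infty$ for $s=2$ match the paper exactly: Lemmas~\ref{x.vo2}.2 and \ref{x.vo3}.3 supply hypotheses (a) and (b) of Lemma~\ref{3.6}, and Lemma~\ref{5.1} supplies the Structure Theorem for Sets of Lengths, so Lemma~\ref{3.6} gives $\delta_w(H)<\infty$. That part is correct and identical in spirit.

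Where you diverge is the route to $\mathsf c_{\mon}(H)<\infty$ in the rank-two case, and here there is a genuine gap. The paper does not re-derive $\mathsf c_{\mon}(H)<\infty$ at all: it simply cites Hassler's result \cite[Theorem 5.3]{Ha09c}, which already establishes finiteness of the monotone catenary degree for strongly ring-like monoids of rank at most two. The new content of Proposition~\ref{5.12} is precisely $\delta_w(H)<\infty$, and Lemmas~\ref{x.vo2} and \ref{x.vo3} were built only for that purpose, not to control $\mathsf c_{\eq}(H)$. Your attempted independent bound on $\mathsf c_{\eq}(H)$ is not adequately supported. Lemma~\ref{x.vo3}.2 is a divisibility and length statement ($u\mid_H\prod u_i$ and $r-1\in\mathsf L(u^{-1}\prod u_i)$), not a substitution that replaces one factorization of $a$ by another at bounded distance; the genuine substitution move appears only inside claim {\bf A1} of the proof of Lemma~\ref{x.vo3}.3, and it only rebalances the $p_2$-coordinates of two atoms that both already have $p_1$-coordinate $\mu_1$. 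You assert that iterating such moves ``in both coordinates'' reduces an arbitrary equal-length pair of factorizations of $a$ to a pair lying inside $H(\mathbb{B})$, but this is not established: there is no control on the distance incurred by each substitution, no bound on the number of substitutions, no argument that the rebalancing can be performed in both coordinates simultaneously without escaping $\mathsf Z(a)$, and no argument that the endpoints actually land in $H(\mathbb{B})$ rather than merely having a few small atoms. Your chain of reductions $\mathsf c_{\adj}(H)\le\delta_w(H)\max\Delta(H)$ (Lemma~\ref{3.5}.2) and then $\mathsf c_{\mon}(H)=\sup\{\mathsf c_{\eq}(H),\mathsf c_{\adj}(H)\}$ is sound in principle, but the $\mathsf c_{\eq}(H)<\infty$ step needs a full argument, which is exactly what \cite[Theorem 5.3]{Ha09c} supplies and which your sketch does not.
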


\begin{proof}
If $s=1$, then the assertion follows from Remark \ref{5.3}.2. Now suppose that $s=2$.
We infer by \cite[Theorem 5.3]{Ha09c} that $\mathsf c_{\mon}(H)<\infty$. Both Lemmas, \ref{x.vo2}.2 and \ref{x.vo3}.3, together with Lemma \ref{5.1} show that the assumptions of Lemma \ref{3.6} are satisfied whence we obtain that $\delta_w ( H) < \infty$.
\end{proof}

\smallskip
Now we formulate the main result of the present section. The crucial point in its proof is that the finiteness of the weak successive distance (achieved in Proposition \ref{5.12}) and the validity of the Structure Theorem for Sets of Lengths preserve the finiteness of the monotone catenary degree when passing to direct products (see Theorem \ref{3.8}).

\medskip
\begin{theorem}\label{5.13}
Let $H$ be a $v$-Noetherian weakly Krull monoid with $\emptyset\ne\mathfrak f = (H\DP\widehat H)$ such that $H_{\mathfrak p}$ is strongly ring-like of rank $s_{\mathfrak p}\le 2$ for each $\mathfrak{p}\in\mathfrak{X}(H)$ with $\mathfrak p\supset\mathfrak f$.
\begin{enumerate}
\item $\mathcal I_v^*(H)$ is a \C-monoid and if $\mathcal C_v (H)$ is finite, then $H$ is a \C-monoid.

\smallskip
\item $\mathcal I_v^*(H)$ satisfies the Structure Theorem for Sets of Lengths and the Structure Theorem for Unions.

\smallskip
\item $\delta_w\big( \mathcal I_v^*(H)\big)<\infty$, and $\mathsf c_{\mon}\big(\mathcal I_v^*(H)\big)<\infty$.
\end{enumerate}
\end{theorem}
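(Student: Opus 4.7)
The plan is to reduce every assertion to the structural decomposition of $\mathcal I_v^*(H)$ recalled at the start of Section~\ref{5}. Setting $\mathcal P = \{\mathfrak p\in\mathfrak X(H)\mid\mathfrak p\not\supset\mathfrak f\}$ and $\mathcal P^* = \mathfrak X(H)\setminus\mathcal P$, the monoid of $v$-invertible $v$-ideals decomposes as
\[
\mathcal I_v^*(H) \;\cong\; \mathcal F(\mathcal P) \;\times\; \prod_{\mathfrak p\in\mathcal P^*} (H_{\mathfrak p})_{\red}\,.
\]
Since $\mathfrak f\ne\emptyset$ and $H$ is $v$-Noetherian weakly Krull, $\mathcal P^*$ is finite; each $H_{\mathfrak p}$ with $\mathfrak p\in\mathcal P^*$ is finitely primary (the monoid analogue of Lemma~\ref{5.2}.2), and by hypothesis even strongly ring-like of rank at most two.

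For part~(1), each factor $(H_{\mathfrak p})_{\red}$ with $\mathfrak p\in\mathcal P^*$ is a C-monoid by the characterization in Remark~\ref{5.3}.1, and $\mathcal F(\mathcal P)$ is a Krull monoid with trivial class group, hence also a C-monoid. Finite direct products of C-monoids are C-monoids (checked directly from the definition via class semigroups), so $\mathcal I_v^*(H)$ is a C-monoid. For the second statement, when $\mathcal C_v(H)$ is finite, the natural map $H_{\red}\hookrightarrow\mathcal I_v^*(H)$ sending $aH^{\times}$ to $(aH)_v$ is a saturated divisor homomorphism with finite factor group $\mathcal C_v(H)$, and the standard stability of the C-monoid property under such embeddings (in the spirit of \cite[Theorem 2.11.9]{Ge-HK06a}) yields that $H$ is a C-monoid.

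For parts~(2) and~(3), the strategy is to verify the hypotheses of Theorem~\ref{3.8} in each factor and then transfer. Each finitely primary $(H_{\mathfrak p})_{\red}$ satisfies the Structure Theorem for Sets of Lengths by Lemma~\ref{5.1}, while $\mathcal F(\mathcal P)$ is half-factorial and trivially satisfies it; Lemma~\ref{3.7}.2 then yields STL for $\mathcal I_v^*(H)$. By Proposition~\ref{5.12}, each $(H_{\mathfrak p})_{\red}$ with $\mathfrak p\in\mathcal P^*$ has $\delta_w<\infty$ and $\mathsf c_{\mon}<\infty$, and since $\mathsf c_{\mon}=\max\{\mathsf c_{\eq},\mathsf c_{\adj}\}$ also $\mathsf c_{\eq}<\infty$; the free abelian factor has all three invariants equal to $0$. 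Theorem~\ref{3.8} then delivers $\delta_w(\mathcal I_v^*(H))<\infty$ and $\mathsf c_{\eq}(\mathcal I_v^*(H))<\infty$, and Lemma~\ref{3.5}.2 combined with the finiteness of $\Delta(\mathcal I_v^*(H))$ (inherited from the factors via Lemma~\ref{3.7}.1 and Lemma~\ref{5.1}) yields $\mathsf c_{\mon}(\mathcal I_v^*(H))<\infty$. The Structure Theorem for Unions is then handled exactly as in the proof of Theorem~\ref{4.15}.2: the finite set of distances together with a uniform bound $\rho_{k+1}-\rho_k\le M$ supplied componentwise by \cite[Proposition 3.4]{F-G-K-T17} lets us invoke \cite[Theorem 2.2]{F-G-K-T17}.

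The hard part, flagged already in the remark preceding the theorem, is absorbed entirely into Theorem~\ref{3.8}: Example~\ref{3.3} shows that monotone chains do not naively concatenate across a direct product, so the argument must work through the weak successive distance $\delta_w$ rather than the strong $\delta$. The structural hypothesis that each local factor be strongly ring-like of rank at most two is indispensable precisely because this is what guarantees $\delta_w$ finiteness at the local level via Proposition~\ref{5.12}; without it the product monoid can have infinite monotone catenary degree, as Remark~\ref{5.3}.3--4 demonstrates.
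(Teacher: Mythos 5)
Your proof follows the paper's structure closely: it starts from the decomposition $\mathcal I_v^*(H)\cong\mathcal F(\mathcal P)\times\prod_{\mathfrak p\in\mathcal P^*}(H_\mathfrak p)_{\red}$ from \cite[Theorem 5.5]{Ge-Ka-Re15a}, derives parts (2) and (3) from Lemma \ref{5.1}, Proposition \ref{5.12}, Lemma \ref{3.7}, Theorem \ref{3.8}, and Lemma \ref{3.5}, and establishes the C-monoid property of $\mathcal I_v^*(H)$ and then of $H$ essentially as the paper does (the paper cites \cite[Theorem 2.9.16]{Ge-HK06a} for the product and \cite[Theorems 2.9.10, 2.9.16]{Ge-HK06a} for the saturated submonoid step; your invocation of these facts is looser but in the right spirit). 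Your closing paragraph correctly identifies Theorem \ref{3.8} and $\delta_w$ as the crux.

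However, there is a genuine gap in the argument for the Structure Theorem for Unions. You propose to repeat the argument from Theorem \ref{4.15}.2, obtaining the uniform bound $\rho_{k+1}-\rho_k\le M$ ``componentwise by \cite[Proposition 3.4]{F-G-K-T17}.'' But \cite[Proposition 3.4]{F-G-K-T17} is a statement about (quasi-)finitely generated monoids: in Theorem \ref{4.15}.2 it applies because $T$ is finitely generated, and in Theorem \ref{3.1} it applies for the same reason. Here, a strongly ring-like factor $(H_\mathfrak p)_{\red}$ of rank $s_\mathfrak p=2$ is \emph{not} finitely generated (by Remark \ref{5.3}.2, $H_{\red}$ is finitely generated only for rank one with finite unit index), so the proposition does not deliver the needed bound for those factors, and the hypotheses of \cite[Theorem 2.2]{F-G-K-T17} are not verified by your route. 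The paper avoids this entirely by exploiting part (1): since $\mathcal I_v^*(H)$ is a C-monoid, it satisfies the Structure Theorem for Unions by \cite[Theorems 3.10 and 4.2]{Ga-Ge09b}. You should replace the final sentence about unions with this appeal to the C-monoid result.
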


\begin{proof}
By \cite[Theorem 5.5]{Ge-Ka-Re15a}, there is a monoid isomorphism
\begin{equation} \label{structure}
\chi\colon\mathcal I_v^*(H)\to\mathcal F (\mathcal P)\times\prod_{\mathfrak p\in\mathcal P^*} (H_\mathfrak p)_\red\quad\text{satisfying}\quad\chi\t\mathcal P =\id_{\mathcal P}.
\end{equation}
where $\mathcal P =\{\mathfrak p\in\mathfrak X(R)\mid\mathfrak
p\not\supset\mathfrak f\}$ and $\mathcal P^*=\mathfrak X (H)\setminus\mathcal P$. Observe that $\mathcal P^*$ is finite.

\smallskip
1. Clearly, $\mathcal F (\mathcal P)$ is a C-monoid. It follows from \cite[Corollary 2.9.8]{Ge-HK06a} that all $H_{\mathfrak p}$ (and thus all $(H_\mathfrak p)_\red$) with $\mathfrak p\in\mathcal P^*$ are C-monoids. Since $\widehat{H_{\mathfrak p}}^{\times}/H_\mathfrak p^{\times}$ is finite for all $\mathfrak p\in\mathcal P^*$ we infer by \cite[Theorem 2.9.16]{Ge-HK06a} that $\mathcal F (\mathcal P)\times\prod_{\mathfrak p\in\mathcal P^*} (H_\mathfrak p)_\red\cong(\mathcal F (\mathcal P)\times\prod_{\mathfrak p\in\mathcal P^*} H_\mathfrak p)_\red$ is a C-monoid. Therefore, $\mathcal I_v^*(H)$ is a C-monoid. Now let $\mathcal C_v (H)$ be finite. We have $H_{\red}\cong\{aH\mid a\in H\}$, $\{aH\mid a\in H\}\subset\mathcal I_v^*(H)$ is a saturated submonoid and $\mathcal I_v^*(H)/\{aH\mid a\in H\}\subset\mathcal{C}_v(H)$ is finite. Therefore, it follows by \cite[Theorems 2.9.10 and 2.9.16]{Ge-HK06a} that $H$ is a C-monoid.

\smallskip
2. and 3. Note that all $H_{\mathfrak p}$ (and thus all $(H_\mathfrak p)_\red$) with $\mathfrak p\in\mathcal P^*$ satisfy the Structure Theorem for Sets of Lengths by Lemma \ref{5.1}, and they have finite weak successive distance and finite equal catenary degree by Proposition \ref{5.12}. By \eqref{structure} and Theorem \ref{3.8}, $\mathcal I_v^*(H)$ satisfies the Structure Theorem for Sets of Lengths, $\delta_w \big(\mathcal I_v^*(H)\big)<\infty$, and $\mathsf c_{\eq}\big(\mathcal I_v^*(H)\big)<\infty$. We infer by Lemma \ref{3.5} that $\mathsf c_{\mon} \big( \mathcal I_v^*(H)\big)<\infty$. Since $\mathcal I_v^* (H)$ is a C-monoid by 1, and C-monoids satisfy the Structure Theorem for Unions by \cite[Theorems 3.10 and 4.2]{Ga-Ge09b}, $\mathcal I_v^*(H)$ satisfies the Structure Theorem for Unions.
\end{proof}

\medskip
\begin{corollary}\label{5.14}
Let $R$ be a weakly Krull Mori domain with $(R:\widehat{R})\not=\{0\}$ such that $R_\mathfrak p$ is Noetherian, $|R/\mathfrak p|<\infty$, and $|\{\mathfrak m\in\mathfrak{X}(\widehat{R})\mid\mathfrak m\cap R=\mathfrak p\}|\leq 2$ for each $\mathfrak p\in\mathfrak{X}(R)$ with $\mathfrak p\supset (R:\widehat{R})$. Then the monoid $\mathcal I_v^*(R)$ has finite weak successive distance and finite monotone catenary degree. \\
In particular, if $R$ is an order in a quadratic number field, then $\mathcal I^*(R)$ has finite weak successive distance and finite monotone catenary degree.
\end{corollary}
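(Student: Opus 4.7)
The plan is to derive both statements as immediate applications of Theorem~\ref{5.13} to the monoid $H = R^{\bullet}$. For the first assertion, $R$ being a Mori weakly Krull domain with $(R:\widehat R) \ne \{0\}$ means $H$ is a $v$-Noetherian weakly Krull monoid with $(H:\widehat H) \ne \emptyset$, and $\mathfrak X(H) = \mathfrak X(R)$. The only substantive hypothesis of Theorem~\ref{5.13} that remains to verify is that, for each $\mathfrak p \in \mathfrak X(R)$ with $\mathfrak p \supset (R:\widehat R)$, the localization $H_{\mathfrak p} = R_{\mathfrak p}^{\bullet}$ is strongly ring-like of rank at most two. This is exactly the content of Proposition~\ref{5.9}(3), whose equivalence states that $R_{\mathfrak p}^{\bullet}$ is strongly ring-like of rank at most two if and only if $R_{\mathfrak p}$ is Noetherian, $|R/\mathfrak p|<\infty$, and $|\{\mathfrak m \in \mathfrak X(\widehat R) : \mathfrak m \cap R = \mathfrak p\}| \le 2$; these are precisely the standing hypotheses. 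Theorem~\ref{5.13}(3) then yields $\delta_w(\mathcal I_v^*(R))<\infty$ and $\mathsf c_{\mon}(\mathcal I_v^*(R))<\infty$.

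For the ``in particular'' statement, the task is to verify that every order $R$ in a quadratic number field $K$ satisfies the hypotheses of the first assertion. Such an $R$ is a one-dimensional Noetherian domain, hence Mori and weakly Krull; its integral closure $\overline R = \mathcal O_K$ is a finitely generated $R$-module, so $(R:\overline R)\ne\{0\}$ and $\widehat R = \overline R$. For each $\mathfrak p \in \mathfrak X(R)$: $R_\mathfrak p$ is Noetherian (localisation of a Noetherian ring); the residue field $R/\mathfrak p$ is finite because $R$ is a finitely generated $\mathbb Z$-module; and every prime of $\mathcal O_K$ contracting to $\mathfrak p$ lies above the rational prime $p = \mathfrak p \cap \mathbb Z$, so the number of such primes is at most $[K:\mathbb Q] = 2$. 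Thus the first assertion applies.

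The remaining step, which I expect to be the only point requiring care, is to identify $\mathcal I^*(R)$ with $\mathcal I_v^*(R)$ so that the finiteness results transfer. I plan to deduce this from the structure isomorphism~\eqref{structure} used in the proof of Theorem~\ref{5.13}: the factors $(H_{\mathfrak p})_{\red}$ on the right-hand side are the monoids of principal fractional ideals of the local rings $R_{\mathfrak p}$, which coincide with $\mathcal I^*(R_{\mathfrak p})$ because every invertible ideal of a local ring is principal. A standard approximation argument then shows that every tuple on the right-hand side is realised by an invertible $R$-ideal (by prescribing the desired local data at the finitely many singular primes and using the Dedekind arithmetic at the regular primes), so $\chi$ restricts to an isomorphism $\mathcal I^*(R) \to \mathcal F(\mathcal P) \times \prod_{\mathfrak p \in \mathcal P^*} (H_{\mathfrak p})_{\red}$. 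Hence $\mathcal I^*(R) \cong \mathcal I_v^*(R)$ as monoids, and the finiteness of $\delta_w$ and $\mathsf c_{\mon}$ transfers verbatim.
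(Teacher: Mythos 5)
Your treatment of the main assertion matches the paper exactly: both reductions to Theorem~\ref{5.13} via Proposition~\ref{5.9}.3 are correct, as is the verification that an order in a quadratic number field satisfies all hypotheses.

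For the ``in particular'' part, you are right that the only point requiring care is identifying $\mathcal I^*(R)$ with $\mathcal I_v^*(R)$, but you take a markedly longer route than the paper does. You go through the structure isomorphism~\eqref{structure} of Theorem~\ref{5.13} and invoke an unspecified ``approximation argument'' to show that $\chi$ restricts to a surjection on $\mathcal I^*(R)$, concluding only that $\mathcal I^*(R)\cong\mathcal I_v^*(R)$. The paper observes instead that, because an order $R$ in a quadratic number field is a one-dimensional Noetherian domain, one has $\mathcal I^*(R)=\mathcal I_v^*(R)$ as an actual equality of monoids --- no structure theorem and no approximation are needed. The underlying reason is elementary: $\mathcal I^*(R)\subset\mathcal I_v^*(R)$ always holds, and conversely any $v$-invertible $v$-ideal $I$ is finitely generated (Noetherian) and $I_{\mathfrak p}$ is principal for each $\mathfrak p\in v$-$\max(R)$; since $R$ is one-dimensional Noetherian, $v$-$\max(R)=\mathfrak X(R)=\max(R)$, so $I$ is locally principal at all maximal ideals and hence invertible. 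Your approach is not wrong in principle (the approximation step can be made precise for a one-dimensional Noetherian domain), but it reproves something the paper obtains from a single standard observation, and the crucial ``approximation'' step is left without a concrete justification, whereas the direct argument avoids it entirely. When you find yourself reaching back into the structure isomorphism just to identify two ideal monoids, it is worth checking whether there is a simpler ideal-theoretic reason --- here there is.
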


\noindent
{\it Comment.} The example given in Remark \ref{5.3}.4 shows that the assumption, that $|\{\mathfrak m\in\mathfrak{X}(\widehat{R})\mid\mathfrak m\cap R=\mathfrak p\}|\leq 2$ for each $\mathfrak p\in\mathfrak{X}(R)$, is crucial.

\begin{proof}
This follows from Theorem \ref{5.13} and Proposition \ref{5.9}.3. Suppose that $R$ is an order in a quadratic number field. Then $R$ is one-dimensional Noetherian whence $\mathcal I^*(R)=\mathcal I_v^*(R)$, and $\mathcal I_v^* (H)$ has finite weak successive distance and finite monotone catenary degree by the main statement.
\end{proof}

%%%%%%%%%%%%%%%%%%%%%%%%%%%%%%%%%%%%%%%%%%%%%%%%%%%%%%%%%%%%%%%%%%%%%%%%%
%% BIBLIOGRAPHY  %%%%%%%%%%%%%%%%%%%%%%%%%%%%%%%%%%%%%%%%%%%%%%%%%%%%%%%%
%%%%%%%%%%%%%%%%%%%%%%%%%%%%%%%%%%%%%%%%%%%%%%%%%%%%%%%%%%%%%%%%%%%%%%%%%

\providecommand{\bysame}{\leavevmode\hbox to3em{\hrulefill}\thinspace}
\providecommand{\MR}{\relax\ifhmode\unskip\space\fi MR }
% \MRhref is called by the amsart/book/proc definition of \MR.
\providecommand{\MRhref}[2]{%
  \href{http://www.ams.org/mathscinet-getitem?mr=#1}{#2}
}
\providecommand{\href}[2]{#2}


\begin{thebibliography}{10}

\bibitem{An-Al17a}
D.D. Anderson and O.A. Al-Mallah, \emph{Commutative group rings that are
  pr\'esimplifiable or domainlike}, J. Algebra Appl. \textbf{16} (2017).

\bibitem{An-An-Za92b}
D.D. Anderson, D.F. Anderson, and M.~Zafrullah, \emph{Atomic domains in which
  almost all atoms are prime}, Commun. Algebra \textbf{20} (1992), 1447 --
  1462.

\bibitem{An-Ch08a}
D.D. Anderson and S. Chun, \emph{Commutative rings with finitely generated
  monoids of fractional ideals}, J. Algebra \textbf{320} (2008), 3006 -- 3021.

\bibitem{An-Mo-Za92}
D.D. Anderson, J.~Mott, and M.~Zafrullah, \emph{Finite character
  representations for integral domains}, Boll. Unione Mat. Ital. \textbf{6}
  (1992), 613 -- 630.

\bibitem{An-Mo92}
D.D. Anderson and J.L. Mott, \emph{Cohen-{K}aplansky domains{\rm \,:} integral
  domains with a finite number of irreducible elements}, J. Algebra
  \textbf{148} (1992), 17 -- 41.

\bibitem{An-VL96a}
D.D. Anderson and S.~Valdes-Leon, \emph{Factorization in commutative rings with
  zero divisors}, Rocky Mt. J. Math. \textbf{26} (1996), 439 -- 480.

\bibitem{Bl-Ga-Ge11a}
V.~Blanco, P.~A. Garc{\'i}a-S{\'a}nchez, and A.~Geroldinger,
  \emph{Semigroup-theoretical characterizations of arithmetical invariants with
  applications to numerical monoids and {K}rull monoids}, Illinois J. Math.
  \textbf{55} (2011), 1385 -- 1414.

\bibitem{Bo72a}
A.~Bouvier, \emph{R\'esultats nouveaux sur les anneaux pr\'esimplifiables}, C.
  R. Acad. Sci. Paris S\'er. A-B \textbf{275} (1972), A955--A957.

\bibitem{Co-Sm11a}
J.~Coykendall and W.W. Smith, \emph{On unique factorization domains}, J.
  Algebra \textbf{332} (2011), 62 -- 70.

\bibitem{F-G-K-T17}
Y.~Fan, A.~Geroldinger, F.~Kainrath, and S.~Tringali, \emph{Arithmetic of
  commutative semigroups with a focus on semigroups of ideals and modules}, J.
  Algebra Appl. \textbf{11} (2017), 1750234 (42 pages).

\bibitem{Fa-Tr18a}
Y.~Fan and S.~Tringali, \emph{Power monoids: {A} bridge between factorization
  theory and arithmetic combinatorics}, {https://arxiv.org/abs/1701.09152}.

\bibitem{Fo-Ho-Lu13a}
M.~Fontana, E.~Houston, and T.~Lucas, \emph{Factoring {I}deals in {I}ntegral
  {D}omains}, Lecture Notes of the Unione Matematica Italiana, vol.~14,
  Springer, 2013.

\bibitem{Fo06a}
A.~Foroutan, \emph{Monotone chains of factorizations}, Focus on commutative
  rings research (A.~Badawi, ed.), Nova Sci. Publ., New York, 2006, pp.~107 --
  130.

\bibitem{Fo-Ge05}
A.~Foroutan and A.~Geroldinger, \emph{Monotone chains of factorizations in
  $\rm{C}$-monoids}, Arithmetical {P}roperties of {C}ommutative {R}ings and
  {M}onoids, Lect. Notes Pure Appl. Math., vol. 241, Chapman \& Hall/CRC, 2005,
  pp.~99 -- 113.

\bibitem{Fo-Ha06b}
A.~Foroutan and W.~Hassler, \emph{Chains of factorizations and factorizations
  with successive lengths}, Commun. Algebra \textbf{34} (2006), 939 -- 972.

\bibitem{Fr-Fr11a}
C.~Frei and S.~Frisch, \emph{Non-unique factorization of polynomials over
  residue class rings of the integers}, Commun. Algebra \textbf{39} (2011),
  1482 -- 1490.

\bibitem{Ga-Ge09b}
W.~Gao and A.~Geroldinger, \emph{On products of $k$ atoms}, Monatsh. Math.
  \textbf{156} (2009), 141 -- 157.

\bibitem{Ge-Gr-Sc-Sc10}
A.~Geroldinger, D.J. Grynkiewicz, G.J. Schaeffer, and W.A. Schmid, \emph{On the
  arithmetic of {K}rull monoids with infinite cyclic class group}, J. Pure
  Appl. Algebra \textbf{214} (2010), 2219 -- 2250.

\bibitem{Ge-HK06a}
A.~Geroldinger and F.~Halter-Koch, \emph{Non-{U}nique {F}actorizations.
  {A}lgebraic, {C}ombinatorial and {A}nalytic {T}heory}, Pure and Applied
  Mathematics, vol. 278, Chapman \& Hall/CRC, 2006.

\bibitem{Ge-Ha-Le07}
A.~Geroldinger, W.~Hassler, and G.~Lettl, \emph{On the arithmetic of strongly
  primary monoids}, Semigroup Forum \textbf{75} (2007), 567 -- 587.

\bibitem{Ge-Ka-Re15a}
A.~Geroldinger, F.~Kainrath, and A.~Reinhart, \emph{Arithmetic of seminormal
  weakly {K}rull monoids and domains}, J. Algebra \textbf{444} (2015), 201 --
  245.

\bibitem{Ge-Ra-Re15c}
A.~Geroldinger, S.~Ramacher, and A.~Reinhart, \emph{On $v$-{M}arot {M}ori rings
  and $\rm{C}$-rings}, J. Korean Math. Soc. \textbf{52} (2015), 1 -- 21.

\bibitem{Ge-Yu13a}
A.~Geroldinger and P.~Yuan, \emph{The monotone catenary degree of {K}rull
  monoids}, Result. Math. \textbf{63} (2013), 999 -- 1031.

\bibitem{HK95a}
F.~Halter-Koch, \emph{Divisor theories with primary elements and weakly {K}rull
  domains}, Boll. Un. Mat. Ital. B \textbf{9} (1995), 417 -- 441.

\bibitem{HK98}
\bysame, \emph{Ideal {S}ystems. {A}n {I}ntroduction to {M}ultiplicative {I}deal
  {T}heory}, Marcel Dekker, 1998.

\bibitem{HK-Ha-Ka04}
F.~Halter-Koch, W.~Hassler, and F.~Kainrath, \emph{Remarks on the
  multiplicative structure of certain one-dimensional integral domains}, Rings,
  {M}odules, {A}lgebras, and {A}belian {G}roups, Lect. Notes Pure Appl. Math.,
  vol. 236, Marcel Dekker, 2004, pp.~321 -- 331.

\bibitem{Ha09c}
W.~Hassler, \emph{Properties of factorizations with successive lengths in
  one-dimensional local domains}, J. Commut. Algebra \textbf{1} (2009), 237 --
  268.

\bibitem{Ph12b}
A.~Philipp, \emph{A precise result on the arithmetic of non-principal orders in
  algebraic number fields}, J. Algebra Appl. \textbf{11}, 1250087, 42pp.

\bibitem{Ph10a}
\bysame, \emph{A characterization of arithmetical invariants by the monoid of
  relations}, Semigroup Forum \textbf{81} (2010), 424 -- 434.

\bibitem{Re12a}
A.~Reinhart, \emph{Structure of general ideal semigroups of monoids and
  domains}, J. Commut. Algebra \textbf{4} (2012), 413 -- 444.

\bibitem{Re13a}
\bysame, \emph{On integral domains that are $\rm{C}$-monoids}, Houston J. Math.
  \textbf{39} (2013), 1095 -- 1116.

\bibitem{Sc09a}
W.A. Schmid, \emph{A realization theorem for sets of lengths}, J. Number Theory
  \textbf{129} (2009), 990 -- 999.

\bibitem{Tr18a}
S.~Tringali, \emph{Periodic properties of weakly directed families with
  applications to factorization theory}, {https://arxiv.org/abs/1706.03525}.

\end{thebibliography}
\end{document}